\documentclass[12pt]{amsart}

\textwidth=1.15\textwidth
\hoffset=-0.4in



\numberwithin{equation}{section}

\newtheorem{theorem}[subsection]{Theorem}
\newtheorem{stheorem}[subsubsection]{Theorem}
\newtheorem{sremark}[subsubsection]{Remark}
\newtheorem{slemma}[subsubsection]{Lemma}
\newtheorem{lemma}[subsection]{Lemma}
\newtheorem{proposition}[subsection]{Proposition}
\newtheorem{sproposition}[subsubsection]{Proposition}
\newtheorem{corollary}[subsection]{Corollary}

\newtheorem{remark}[subsection]{Remark}

\theoremstyle{definition}
\newtheorem*{ack}{Acknowledgements}

\theoremstyle{remark}



\newcommand{\ztwo}{{\bf{Z}}/2}

\newcommand{\field}{{\bf{F}}}


\newcommand{\tr}{\mathop{\rm Tr}}

\newcommand{\lt}{\mathop{\rm LT}}
\newcommand{\lm}{\mathop{\rm LM}}

\newcommand{\ring}{S}

\newcommand{\del}{\Delta}

\newcommand{\tat}{t\^ete-a-t\^ete}

\newcommand{\shsop}[1]{{N_{#1}}}
\newcommand{\norm}[1]{N_G(#1)}

\title[Klein four]
{Rings of invariants for modular representations of the Klein four group}

\author{M\"ufit Sezer}
\address{Department of Mathematics, Bilkent University,
06800 Ankara, Turkey}
\email{sezer@fen.bilkent.edu.tr}

\author{R.~James Shank}
\address{School of Mathematics, Statistics and Actuarial Science \\
 \hfil\break\indent University of Kent, Canterbury, CT2 7NF, United Kingdom}
\email{R.J.Shank@kent.ac.uk}

\thanks{The first author is  partially supported by a grant from T\"UBITAK: 112T113}
\subjclass{13A50}
\date{\today}

\begin{document}

\maketitle
\begin{abstract}
We study the rings of invariants for the indecomposable
modular representations of the Klein four group.
For each such representation we compute the Noether number
and give minimal generating sets for the Hilbert ideal and the field of fractions.
We observe that, with the exception of the regular representation,
the Hilbert ideal for each of these representations is a complete intersection.
\end{abstract}

\section*{Introduction}
The modular representation theory of the Klein four group has long attracted attention.
The group algebra of Klein four over an infinite field of characteristic $2$ is one of
the relatively rare examples of a group algebra with domestic representation type
(see, for example, \cite[\S 4.4]{benson}).
If we work over an algebraically closed field then for each even dimension there
is a one parameter family of indecomposable representations and a finite number of exceptional
indecomposable representations. For each odd dimension (greater than $1$) there are only two
indecomposable representations. In this paper we investigate the rings of invariants of the
indecomposable representations of the Klein four group over fields of characteristic $2$.
For each such representation we compute the Noether number and give minimal generating sets
for the Hilbert ideal and the field of fractions (definitions are given below).
For an indecomposable representation of the Klein four group, say $V$, our results show that
the Noether number is at most $2\dim(V)+1$
(detailed formulae are given later in this introduction) and, with the exception of the regular representation,
the Hilbert ideal is generated by a homogeneous system of parameters.
We note that the Hilbert ideals are generated by polynomials of degree at most $4$,
confirming Conjecture~3.8.6(b) of \cite{DK} for these representations.

We start with a few definitions and some notation.
Suppose that $V$ is a finite dimensional representation of a finite group
$G$ over a field $\field$. The induced action on the dual space
$V^*$ extends to the symmetric algebra $S(V^*)$ of polynomial
functions on $V$ which we denote by $\field[V]$. The action of
$g\in G$ on $f\in \field[V]$ is given by $(gf)(v)=f(g^{-1}v)$ for
$v\in V$. The ring of invariant polynomials $$\field[V]^G=\{f\in
\field[V] \mid \; g(f)=f \; \forall g \in G\}$$ is a graded,
finitely generated subalgebra of $\field[V]$.
The maximal degree of a polynomial in a
minimal homogeneous generating set for $\field[V]^G$ is known as the {\it Noether
number} of $V$. The ideal in $\field[V]$ generated by the homogeneous invariants of
positive degree is the {\it Hilbert ideal} of $V$.
If the characteristic of $\field$
divides $|G|$, then $V$ is called a {\it modular} representation.
Rings of invariants for non-modular representations are reasonably well-behaved.
For instance, it is well-known  that if $V$ is non-modular, then $\field[V]^G$ is always
Cohen-Macaulay and the Noether number is less than or equal to $|G|$
(see, for example, \cite[\S 3.4, \S 3.8]{DK}).
Both of these properties can fail in the modular case. Rings of invariants for
modular representations are rarely Cohen-Macaulay and there is no bound on
the degrees of a generating set which depends only on the group order.
Computing rings of invariants for modular representations can be difficult even in basic cases.
Consider a representation of a
cyclic $p$-group ${\bf{Z}}/ {p^r}$ over a field of characteristic
$p$. The action is easy to describe: up to a change of basis, a
generator of the group acts by a sum of Jordan blocks each with eigenvalue $1$ and size at
most $p^r$. Despite this, even when $r=1$, although the Noether numbers are known \cite{FSSW},
an explicit generating set has been constructed for only a limited
number of cases; see \cite{wehlau} for a summary and
recent advances. For $r>1$, much less is known; see \cite{sw-prog} for the study of a specific case and
\cite{sezer} for some partial results on degree bounds. This paper is a part of a programme, initiated in
 \cite{CSW}, to understand the rings of invariants of modular representations of elementary abelian $p$-groups.
In \cite{CSW}, the rings of invariants of all two dimensional
representations and all three dimensional representations for groups of rank at
most three were computed; in all cases the rings were shown to be complete intersections.

The results in section \ref{block} apply to an arbitrary group
$G$ but for the rest of the paper $G:=\langle\sigma_1,\sigma_2\rangle\cong\ztwo \times \ztwo$ denotes the Klein four
group.
For $\field$ an algebraically closed field of characteristic $2$,
the indecomposable representations of the Klein four group over $\field$ are the following:
\begin{itemize}
\item the trivial representation $\field$;
\item the regular representation $V_{reg}$;
\item a representation of dimension $2m$ for each $\lambda\in\field\cup\{\infty\}$, which we denote by $V_{m,\lambda}$;
\item the representations $\Omega^m(\field)$ and $\Omega^{-m}(\field)$ of dimension $2m+1$, where $\Omega$ denotes the Heller
operator.
\end{itemize}
See \cite[\S 4.4]{benson} for a detailed discussion of this classification.
Note that $V_{m,0}$, $V_{m,1}$ and $V_{m,\infty}$, while not equivalent representations, are linked by group automorphisms.
Therefore the invariants can be computed using the same matrix group and
$\field[V_{m,0}]^G\cong\field[V_{m,1}]^G\cong\field[V_{m,\infty}]^G$.
In  \cite{El-Fl}, the depth of  $\field[V]^G$ was computed for
each of the indecomposable modular representations of the Klein four group.
The only indecomposable representations for which the ring of invariants is Cohen-Macaulay
are the the trivial representation, the regular representation,
$V_{1,\lambda}$, $V_{2,\lambda}$, $\Omega^{-1}(\field)$, $\Omega^{-2}(\field)$ and $\Omega^{1}(\field)$.
Note that, for each of these representations, $\field[V]^G$ is a complete intersection.
In \cite{K-S} separating sets of invariants are given for the indecomposable modular representations of the Klein four group.

We identify $\field[V]$ with the polynomial algebra  on the
variables $x_i$ and $y_j$. We use the graded reverse lexicographic order (grevlex)
with $x_i<y_j$, $x_i<x_{i+1}$ and $y_j<y_{j+1}$.
We adopt the convention that a monomial is a product of variables and a term is a monomial multiplied by a coefficient.
For a polynomial $f\in \field[V]$, we denote the leading monomial
by $\lm(f)$ and the leading term by $\lt(f)$. We make occasional use of SAGBI bases,
the  {\bf S}ubalgebra {\bf A}nalog of
a {\bf G}r\"obner {\bf B}asis for {\bf I}deals.
For a subset $\mathcal B=\{h_1\dots,h_{\ell}\}$ of a subalgebra $A\subset\field[V]$
and a sequence $I=(i_1,\ldots,i_{\ell})$ of non-negative integers,
denote $\prod_{j=1}^{\ell}h_j^{i_j}$ by $h^I$.
A {\it \tat}\ for $\mathcal B$ is a pair $(h^I,h^J)$ with $\lm(h^I)=\lm(h^J)$;
 we say that a \tat\ is non-trivial if the support of $I$ is disjoint from the support of $J$.
The reduction of an S-polynomial is a fundamental calculation in the theory of Gr\"obner bases.
The analogous calculation for SAGBI bases is the {\it subduction} of a \tat.
$\mathcal B$ is a SAGBI basis for $A$ if every non-trivial \tat\ subducts to zero.
A SAGBI basis is a particularly useful generating set for the subalgebra.
For background material on SAGBI bases,
see \cite[\S 11]{sturm} or \cite[\S 3]{sw-comp}.
 For $f\in \field[V]$, we define
the {\it transfer} of $f$ by
$\tr(f):= \sum_{\sigma\in G}\sigma (f)$ and the {\it norm} of $f$, which we denote
by $N_G(f)$, to be the product over the $G$-orbit of $f$.
If the coefficient of a monomial $M$ in a polynomial $f$ is non-zero, we say that
$M$ {\it appears} in $f$.

We conclude the introduction with a summary of the paper.
Section~\ref{preliminaries} contains preliminary results on the invariant theory of $\ztwo$.
In section~\ref{block}, we introduce the concept of a {\it block hsop}, a particularly nice homogeneous system of parameters,
and prove a theorem which we use to compute Noether numbers. A recent result of Peter Symonds
\cite[Corollary 0.3]{symonds} is a key ingredient in our proof. The results of this section are valid for
any modular representation of a finite group.

In section~\ref{even-sec}, we consider the even dimensional representations. We include an explicit description of the group actions.
We show that for $m>1$, the Noether number of $V_{m,\lambda}$ is $3m-2\lfloor m/2\rfloor$
if $\lambda\in\field\setminus\field_2$ and $3m-2\lceil m/2\rceil$ if $\lambda\in\{0,1,\infty\}$.
We also show that the Hilbert ideal of $V_{m,\lambda}$ is generated by a block hsop and is therefore a complete intersection.
A transcendence basis for the field of fractions is given; in fact we show $\field[V_{m,\lambda}]^G[x_1]^{-1}$ is a
``localised polynomial algebra''. For various small dimensional cases, we give generating sets for the rings of invariants
and for the other cases we give explicit input sets for the SAGBI/Divide-by-$x$ algorithm
 introduced in \cite[\S 1]{CSW}.

The odd dimensional representations are considered in sections \ref{easy} and \ref{hard}. We show that the Noether number for
$\Omega^{-m}(\field)$ is $m+1$ (Corollary~\ref{easy-odd-cor}), the Noether number for
$\Omega^{m}(\field)$ is $3m$ for $m>1$ (Corollary~\ref{hard-odd-cor}), and that in all cases the Hilbert ideal is generated by a block hsop.
We give generating sets for $\field[\Omega^{-m}(\field)]^G[x_1^{-1}]$ and for  $\field[\Omega^{m}(\field)]^G[(x_1x_2(x_1+x_2))^{-1}]$.
We also give explicit input sets for the SAGBI/Divide-by $x$ algorithm.

\section{Preliminaries}
\label{preliminaries}

Let $\field$ denote a field of characteristic $2$.
Suppose $\langle\sigma\rangle\cong\ztwo$ acts on
$\ring:=\field[x_1,\ldots,x_m,y_1,\ldots,y_m]$
by $\sigma(x_j)=x_j$, $\sigma(y_j)=y_j+x_j$.
Define $\Delta:=\sigma-1$ and $n_i:=y_i^2+x_iy_i$.
We will often write $S^{\sigma}$ as short-hand for $S^{\langle\sigma\rangle}$.

\begin{proposition}\label{sigma-gen}
{\rm (}\cite{richman}, \cite{campbell-hughes}, \cite{CSW-ad}{\rm
)}
 $\ring^{\sigma}$ is generated by
$$\{n_1,\ldots,n_m\}\cup
\{\Delta(\beta)\mid \beta\ {\rm divides}\ y_1\cdots y_m\}.$$
\end{proposition}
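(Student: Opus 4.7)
The plan is induction on $m$. Let $A \subseteq S^\sigma$ denote the subalgebra generated by the proposed set; the inclusion $A \subseteq S^\sigma$ is immediate because each $n_i$ is a norm and each $\Delta(\beta)$ lies in $\ker\Delta$ (as $\Delta^2 = (\sigma-1)^2 = 0$ in characteristic two). For the base case $m=1$, the divisors of $y_1$ are $1$ and $y_1$, yielding generators $n_1$ and $\Delta(y_1) = x_1$, so $A = \field[x_1, n_1]$; writing any $f \in S$ as $f = a + b y_1$ with $a, b \in \field[x_1, n_1]$ (a free-module basis using $y_1^2 = n_1 + x_1 y_1$), the computation $\sigma(f) - f = b x_1$ forces $b = 0$ under invariance, so $f \in A$.

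For the inductive step, set $R = \field[x_1,\ldots,x_{m-1}, y_1,\ldots,y_{m-1}]$ so that $S = R[x_m, y_m]$. Using $y_m^2 = n_m + x_m y_m$, every $f \in S$ has a unique expression $f = g + h y_m$ with $g, h \in R[x_m, n_m]$. A direct computation shows $\sigma(f) = f$ if and only if $\sigma(h) = h$ (so $h \in R^\sigma[x_m, n_m]$) and $\Delta(g) = x_m h$. Splitting $g = g_0 + x_m g_1$ with $g_0 \in R[n_m]$ and $g_1 \in R[x_m, n_m]$, and separating by $x_m$-degree, we obtain $\Delta(g_0) = 0$ (so $g_0 \in R^\sigma[n_m]$) and $\Delta(g_1) = h$. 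By the inductive hypothesis $R^\sigma \subseteq A$, hence $g_0 \in A$.

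It remains to show that $x_m g_1 + h y_m \in A$. For $r \in R$ define $\Phi(r) := r x_m + \Delta(r) y_m$; starting from $\sigma(r y_m) = (r + \Delta(r))(y_m + x_m)$, one verifies the key identity
\[
\Phi(r) = \Delta(r y_m) + x_m \Delta(r).
\]
Writing $r = \sum_\alpha r_\alpha y^\alpha$ in the free basis of $R$ over $\field[x_1,\ldots,x_{m-1}, n_1,\ldots,n_{m-1}]$ (indexed by squarefree $\alpha \subseteq \{1,\ldots,m-1\}$), we get $\Delta(r y_m) = \sum_\alpha r_\alpha \Delta(y^{\alpha \cup \{m\}})$ since each $r_\alpha$ is $\sigma$-fixed; each $y^{\alpha \cup \{m\}}$ divides $y_1 \cdots y_m$, so $\Delta(r y_m) \in A$, and $\Delta(r) \in R^\sigma \subseteq A$ by induction gives $x_m \Delta(r) \in A$. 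Therefore $\Phi(r) \in A$. Expanding $g_1 = \sum_{a,b} g_1^{(a,b)} x_m^a n_m^b$ with $g_1^{(a,b)} \in R$, the same manipulation yields
\[
x_m g_1 + h y_m = \sum_{a,b} x_m^a n_m^b \,\Phi\bigl(g_1^{(a,b)}\bigr) \in A,
\]
so $f \in A$ and the induction is complete.

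The main technical point is isolating the identity $\Phi(r) = \Delta(r y_m) + x_m \Delta(r)$, which channels the ``mixed'' part $x_m g_1 + h y_m$ of $f$ into the algebra generated by $\Delta$ of divisors of $y_1 \cdots y_m$; once this identity is in hand, the free-module decomposition of $S$ over $\field[x_1,\ldots,x_m, n_1,\ldots,n_m]$ assembles $f$ cleanly from elements of $A$.
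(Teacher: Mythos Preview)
The paper does not supply its own proof of this proposition; it is stated with citations to Richman, Campbell--Hughes, and Campbell--Shank--Wehlau and then used as input for the subsequent results. So there is nothing in the paper to compare your argument against.

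That said, your inductive proof is correct and self-contained. The base case is fine. In the inductive step, the decomposition $f = g_0 + x_m g_1 + h y_m$ with $\Delta(g_0)=0$ and $\Delta(g_1)=h$ follows exactly as you say from the direct-sum splitting $R[x_m,n_m] = R[n_m] \oplus x_m R[x_m,n_m]$. The identity $\Phi(r) = \Delta(r y_m) + x_m \Delta(r)$ checks out in characteristic~$2$, and combining it with the free-module expansion of $r$ over $\field[x_1,\ldots,x_{m-1},n_1,\ldots,n_{m-1}]$ (basis the squarefree monomials in $y_1,\ldots,y_{m-1}$) does place $\Phi(r)$ in $A$: the coefficients $r_\alpha$ lie in $A$ because each $x_i = \Delta(y_i)$ and each $n_i$ is a listed generator, and each $\Delta(y^{\alpha\cup\{m\}})$ is a listed generator. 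The final assembly $x_m g_1 + h y_m = \sum_{a,b} x_m^a n_m^b\,\Phi(g_1^{(a,b)})$ is correct since $x_m$ and $n_m$ are $\sigma$-fixed. Your argument therefore gives a clean direct proof independent of the cited references.
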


\begin{corollary}\label{del-lem}
$\Delta\ring=\left(\left(x_1,\ldots,x_m\right)\ring\right)^{\sigma}$ and
$\ring^{\sigma}/\Delta\ring \cong\field[n_1,\ldots,n_m]$.
\end{corollary}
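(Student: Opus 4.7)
The plan is to establish both statements in parallel by showing: (i) $\Delta S\subseteq S^{\sigma}$, (ii) $\Delta S$ is an $S^{\sigma}$-ideal, (iii) $\Delta S\subseteq (x_1,\ldots,x_m)S$, and (iv) every $f\in S^{\sigma}$ can be written modulo $\Delta S$ as a polynomial in $n_1,\ldots,n_m$, with this representation unique.

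For (i), I would note that in characteristic $2$ we have $\sigma\Delta=\sigma(\sigma-1)=1-\sigma=\Delta$, so $\Delta S\subseteq S^{\sigma}$. For (ii), the twisted Leibniz rule $\Delta(fg)=\sigma(f)\Delta(g)+\Delta(f)g$, combined with $\Delta(h)=0$ for $h\in S^{\sigma}$, gives $h\,\Delta(f)=\Delta(hf)\in\Delta S$, so $\Delta S$ is indeed an ideal of $S^{\sigma}$. For (iii), since $\Delta(x_j)=0$ and $\Delta(y_j)=x_j$, a direct expansion of $\sigma(M)-M$ on an arbitrary monomial $M=\prod x_i^{a_i}\prod y_j^{b_j}$, using $\sigma(y_j)=y_j+x_j$, shows that every monomial appearing in $\Delta(M)$ is divisible by some $x_j$; hence $\Delta S\subseteq (x_1,\ldots,x_m)S$, and combined with (i) this gives $\Delta S\subseteq \bigl((x_1,\ldots,x_m)S\bigr)^{\sigma}$.

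For (iv), I would invoke Proposition~\ref{sigma-gen}: any $f\in S^{\sigma}$ is a polynomial in the $n_i$ and the $\Delta(\beta)$. Using (ii), every term of such an expression that involves at least one factor $\Delta(\beta)$ lies in $\Delta S$, so I can write $f=p(n_1,\ldots,n_m)+g$ with $g\in\Delta S$. This gives surjectivity of the obvious map $\field[n_1,\ldots,n_m]\to S^{\sigma}/\Delta S$, and also lets me finish the reverse inclusion in the first identity: if $f\in\bigl((x_1,\ldots,x_m)S\bigr)^{\sigma}$, then $p(n_1,\ldots,n_m)=f-g$ lies in the monomial ideal $(x_1,\ldots,x_m)S$.

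The one step requiring an honest argument, and the place I expect the only real obstacle, is showing that the only $p\in\field[n_1,\ldots,n_m]$ lying in $(x_1,\ldots,x_m)S$ is $p=0$ (this gives both $p=0$ above, forcing $f=g\in\Delta S$, and the injectivity of the map to $S^{\sigma}/\Delta S$). The key observation is that expanding $n_1^{a_1}\cdots n_m^{a_m}=\prod(y_i^2+x_iy_i)^{a_i}$ produces the pure-$y$ monomial $y_1^{2a_1}\cdots y_m^{2a_m}$ with coefficient $1$, and distinct exponent vectors $(a_1,\ldots,a_m)$ produce distinct such monomials. Hence for a nonzero $p=\sum c_{\mathbf a}\,n^{\mathbf a}$, the coefficient of each $y_1^{2a_1}\cdots y_m^{2a_m}$ in the expansion is exactly $c_{\mathbf a}$, so at least one monomial not divisible by any $x_j$ appears in $p$; this contradicts $p\in (x_1,\ldots,x_m)S$. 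Both assertions of the corollary then follow.
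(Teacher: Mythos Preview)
Your proof is correct and follows essentially the same route as the paper's. The paper proves the easy inclusion $\Delta S\subseteq\bigl((x_1,\ldots,x_m)S\bigr)^{\sigma}$ using $\Delta^2=0$ (equivalent to your $\sigma\Delta=\Delta$) and then simply says the rest ``follows from the definition of $n_i$ and the generating set for $S^{\sigma}$''; your steps (ii) and (iv) spell out exactly what is being left implicit there, including the pure-$y$ monomial argument that no nonzero polynomial in the $n_i$ can lie in $(x_1,\ldots,x_m)S$.
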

\begin{proof}
It is clear from the definition of $\Delta$ that $\Delta\ring\subset (x_1,\ldots,x_m)S$.
Since $\Delta^2=0$, we have $\Delta\ring\subseteq \left(\left(x_1,\ldots,x_m\right)S\right)^{\sigma}$.
The result then follows from the definition of $n_i$ and the generating set for $\ring^{\sigma}$ given above.
\end{proof}

Proposition~\ref{sigma-gen} and Corollary~\ref{del-lem} give the following.

\begin{lemma}\label{ilk}
Suppose $a_1, \dots ,a_m$ are non-negative integers. Let $f\in
S^\sigma$.
\begin{enumerate}
\item[(i)] If $y_1^{a_1}\cdots y_m^{a_m}$ appears in $f$, then
$a_i$ is even for $i\in\{1,\ldots,m\}$.
\item[(ii)] If  $y_1^{a_1}\cdots y_{m-1}^{a_{m-1}}y_mx_m$ appears
in $f$, then $a_i$ is even for $i\in\{1,\ldots,m-1\}$.
\end{enumerate}
\end{lemma}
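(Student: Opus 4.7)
The plan is to leverage Proposition~\ref{sigma-gen} and Corollary~\ref{del-lem}: since $S^\sigma$ is generated by the $n_i$ together with the $\Delta(\beta)$ as $\beta$ runs over divisors of $y_1\cdots y_m$, it suffices to analyze which monomials of the two specified shapes can arise in a product of these generators.

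For part~(i), I would apply Corollary~\ref{del-lem} directly. Because $\Delta S\subseteq(x_1,\ldots,x_m)S$, no element of $\Delta S$ supports a pure-$y$ monomial; hence the pure-$y$ monomials of $f$ coincide with those of any lift $g\in\field[n_1,\ldots,n_m]$ of the image of $f$ in $S^\sigma/\Delta S$. Writing $n_i=y_i^2+x_iy_i$, the $x$-degree-zero component of $n_1^{e_1}\cdots n_m^{e_m}$ is exactly $y_1^{2e_1}\cdots y_m^{2e_m}$, so every pure-$y$ monomial that appears in $g$ (and hence in $f$) has all even exponents.

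For part~(ii), I would apply the specialization $\pi\colon S\to\field[y_1,\ldots,y_m,x_m]$ sending $x_j\mapsto 0$ for $j<m$. The target monomial $\mu=y_1^{a_1}\cdots y_{m-1}^{a_{m-1}}y_mx_m$ involves none of the killed variables, so $\mu$ appears in $f$ if and only if it appears in $\pi(f)$. Applying $\pi$ to the generators from Proposition~\ref{sigma-gen} leaves $y_j^2$ for $j<m$, the invariant $n_m=y_m^2+x_my_m$, and, for each squarefree $\beta$ containing the factor $y_m$ with remaining support $T'\subseteq\{1,\ldots,m-1\}$, the image $\pi(\Delta(\beta))=x_m\prod_{i\in T'}y_i$; the images of the remaining $\Delta(\beta)$ vanish. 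A monomial of $x_m$-degree exactly $1$ in a product of these images can arise in only two ways. Either the $x_m$ factor is extracted from some $n_m^{d_m}$-factor, contributing $y_m^{2d_m-1}x_m$; matching $y_m^1$ then forces $d_m=1$ and leaves each $a_j$ with $j<m$ visibly even. Or the $x_m$ factor comes from exactly one $\pi(\Delta(\beta))$ while only the $y_m^2$-summand of $n_m$ is ever used; this forces $y_m$ to appear with an even exponent, so $\mu$ is unreachable. Only the first case contributes, proving the parity claim.

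The only real subtlety is the case split in part~(ii); once one observes that an $x_m$-carrying $\Delta$-image pins the $y_m$-exponent to an even value, the lemma reduces to a short consequence of the two results just quoted.
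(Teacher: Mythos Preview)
Your proposal is correct and follows essentially the same route the paper indicates: the paper's entire proof is the single sentence ``Proposition~\ref{sigma-gen} and Corollary~\ref{del-lem} give the following,'' and you have simply made that derivation explicit. Your specialization $\pi$ for part~(ii) is a clean way to carry out the monomial bookkeeping that the paper leaves to the reader.
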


A simple calculation shows that for $a,b\in\ring$,
$$\Delta(a\cdot b)=\Delta(a)b+a\Delta(b)+\Delta(a)\Delta(b)$$
and $\Delta(a^2)=\Delta(a)^2$. Therefore, if $M=y_1^{a_1}\cdots y_m^{a_m}$ with $a_i>0$,
then the monomial $x_iM/y_i$ appears in $\Delta(M)$ with coefficient $1$ if
$a_i$ is odd and coefficient $0$ if $a_i$ is even.
Note that if a monomial $M$ appears (with non-zero coefficient) in $f\in\ring^\sigma$
and a monomial $M'$ appears in $\Delta(M)$, then there is at least one
further monomial, say $M''$, with $M\not=M''$, such that $M''$ appears in $f$ and
$M'$ appears in $\Delta M''$.

\begin{lemma}
\label{permute} Suppose $M'$ is a monomial in $\{y_1,\dots, y_m\}$
and $M=M'x_iy_j$ for some $i,j\in\{1,\ldots,m\}$ with $i\neq j$.
Assume further that the degree of $y_j$ in $M'$ is even. If $M$
appears in a polynomial $f\in S^\sigma$, then the degree of $y_i$
in $M'$ is even and $M'x_jy_i$ also appears in $f$. Moreover, the
coefficients in $f$ of these monomials are the same.
\end{lemma}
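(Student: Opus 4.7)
The plan is to apply $\Delta=\sigma-1$ to $f$ and extract the coefficient of a well-chosen auxiliary monomial that must vanish. Write the lemma's $M'$ as $N\cdot y_i^{a}y_j^{b}$, where $N$ is a monomial in $\{y_k:k\neq i,j\}$; the hypothesis says that $b$ is even, and the goal is to prove that $a$ is even and that the monomial $P_1:=M'x_jy_i=N\cdot y_i^{a+1}y_j^{b}x_j$ has the same coefficient in $f$ as $M$. The auxiliary monomial to track is
$$
Q:=M'\cdot x_ix_j=N\cdot y_i^{a}y_j^{b}x_ix_j,
$$
whose coefficient in $\Delta(f)$ is zero because $f\in\ring^\sigma$.

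The key computational step is to enumerate the monomials $P$ whose image $\sigma(P)$ contains $Q$. Expanding $\sigma(P)=\prod_k(y_k+x_k)^{d_k}\prod_k x_k^{e_k}$, one sees that $Q$ can arise only from $P$ of the form $N\cdot y_i^{d_i}y_j^{d_j}x_i^{e_i}x_j^{e_j}$ with $(d_i,e_i)\in\{(a,1),(a+1,0)\}$ and $(d_j,e_j)\in\{(b,1),(b+1,0)\}$. This gives exactly four candidates: $Q$ itself, $P_1=M'x_jy_i$, the original $M=M'x_iy_j$, and $P_3:=N\cdot y_i^{a+1}y_j^{b+1}$. Computing the binomial coefficients modulo~$2$ and using that $b$ is even, their contributions to the $Q$-coefficient of $\Delta(f)=\sigma(f)-f$ are respectively $0$, $(a+1)\,c_{P_1}$, $c_M$, and $(a+1)\,c_{P_3}$.

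Setting the sum equal to zero yields the identity $c_M=(a+1)(c_{P_1}+c_{P_3})$ in $\field$. Since $c_M\neq 0$ by hypothesis, $a+1$ must be odd, i.e., $a$ is even; this is the first assertion. For the second, observe that $P_3$ is a monomial purely in the $y$-variables, so if $c_{P_3}$ were nonzero then Lemma~\ref{ilk}(i) would force its $y_i$-exponent $a+1$ to be even, contradicting the parity of $a$ just established. Hence $c_{P_3}=0$ and $c_M=c_{P_1}$, which is exactly the asserted equality of the coefficients of $M'x_iy_j$ and $M'x_jy_i$ in $f$.

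The only delicate point is making sure that the enumeration of sources of $Q$ is complete and the binomial coefficients mod~$2$ are tallied correctly; once that bookkeeping is done, the parity of $a$ and the equality of coefficients follow at once. The invocation of Lemma~\ref{ilk}(i) to kill the pure $y$-monomial $P_3$ is essential, since without it the vanishing identity would only pin down $c_{P_1}+c_{P_3}$ rather than $c_{P_1}$ itself.
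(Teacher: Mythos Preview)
Your proof is correct and follows essentially the same approach as the paper: both track the coefficient of the auxiliary monomial $Q=M'x_ix_j$ in $\Delta(f)=0$, enumerate the four possible sources $Q$, $M'x_jy_i$, $M=M'x_iy_j$, $M'y_iy_j$, and then invoke Lemma~\ref{ilk}(i) to eliminate the pure-$y$ monomial $M'y_iy_j$. The only cosmetic difference is that the paper rules out $M'y_iy_j$ using its odd $y_j$-exponent $b+1$ (available directly from the hypothesis), whereas you use its odd $y_i$-exponent $a+1$ after first establishing that $a$ is even; both arguments are valid.
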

\begin{proof}
Since the degree of $y_j$ in $M$ is odd, $M'x_ix_j$ appears in
$\Delta(M)$ with coefficient 1. Note that if the degree of $y_i$
in $M'$ is odd, then there is no other monomial in $\ring$ that
produces $M'x_ix_j$ after applying $\Delta$. Therefore, we may
assume that the degree of $y_i$ in $M'$ is even. In this case,
$M'x_ix_j$ appears in  $\Delta(M'y_ix_j)$ and $\Delta (M'y_iy_j)$.
However, the degree of $y_j$ in the monomial $M'y_iy_j$ is odd so
it follows from  Lemma~\ref{ilk} that $M'y_iy_j$ does not appear
in $f$. Therefore $M'y_ix_j$ appears in $f$. Since the coefficient
of $M'x_ix_i$ in both $\Delta(M'y_ix_j)$ and $\Delta(M'y_jx_i)$ is
$1$, the coefficients of $M'y_ix_j$ and $M'y_jx_i$ in $f$ must be
equal.
\end{proof}

\begin{lemma}
\label{lift} Suppose that $M'$ is a monomial in $\{y_1,
\dots,y_m\}\setminus \{y_j\}$ for some $j\in\{1,\ldots,m\}$ and
$M=M'y_jx_j$. For $f\in S^\sigma$, $M$ appears in $f$ if and only
if   $M'y_j^2$  appears in $f$. Moreover, the coefficients in $f$
of these monomials are the same. Finally, $M'y_j^3x_j$ does not
appear in any polynomial in $S^\sigma$.
\end{lemma}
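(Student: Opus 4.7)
The plan is to exploit the fact that $f \in S^\sigma$ forces $\Delta f = 0$, and to extract the claimed coefficient identities by reading off the coefficient of a carefully chosen monomial in $\Delta f$, in the spirit of the proofs of Lemma~\ref{ilk} and Lemma~\ref{permute} and of the observation preceding them.

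For the biconditional together with the equality of coefficients, I would take $M' x_j^2$ as the witness monomial. A short computation using $\sigma(x_j) = x_j$ and $(y_j + x_j)^2 = y_j^2 + x_j^2$ in characteristic~$2$ shows that $M' x_j^2$ appears with coefficient $1$ in each of $\Delta(M' y_j x_j)$ and $\Delta(M' y_j^2)$. The key step is then to enumerate \emph{all} monomials $N \in \ring$ such that $M' x_j^2$ occurs in $\Delta N$: matching exponents against the expansion $\sigma(N) = \prod x_i^{b_i}(y_i + x_i)^{a_i}$, with $a_i$ and $b_i$ denoting the $y_i$- and $x_i$-exponents of $N$, forces $N$ to agree with $M'$ on the $y_i$ for $i \ne j$ and to be free of $x_i$ for $i \ne j$, and leaves only three possibilities for the $(y_j, x_j)$-part of $N$: the monomial $x_j^2$ (which contributes nothing, since there the $M' x_j^2$-term comes from $N$ itself and so cancels in $\sigma N - N$), $y_j x_j$, and $y_j^2$. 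Reading the coefficient of $M' x_j^2$ in $\Delta f = 0$ then gives $c_M + c_{M' y_j^2} = 0$, which is the claim in characteristic~$2$.

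For the final assertion I would set $M'' := M' y_j^3 x_j$ and repeat the argument with the witness monomial $M' y_j^2 x_j^2$. The same enumeration now produces three candidate monomials $N$ whose $\Delta N$ can contain $M' y_j^2 x_j^2$: $M' y_j^2 x_j^2$ itself (killed by cancellation in $\sigma N - N$), $M''$ (contributing with binomial factor $\binom{3}{1} \equiv 1 \pmod 2$), and $M' y_j^4$ (contributing with binomial factor $\binom{4}{2} \equiv 0 \pmod 2$). Hence the coefficient of $M' y_j^2 x_j^2$ in $\Delta f$ reduces to $c_{M''}$ alone, and invariance forces $c_{M''} = 0$.

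The main care required lies in the enumeration of contributing monomials and in tracking the binomial coefficients modulo~$2$; the subtle arithmetic point is that $\binom{4}{2}$ is even while $\binom{3}{1}$ is odd, which is precisely what suppresses the $M' y_j^4$ contribution in the second step and makes $c_{M'' } = 0$ a genuine consequence of invariance rather than a trivial statement.
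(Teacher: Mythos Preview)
Your argument is correct and follows essentially the same route as the paper's proof: choose the witness monomials $M'x_j^2$ and $M'y_j^2x_j^2$, enumerate all $N$ with the witness appearing in $\Delta N$, and read off the coefficient identity from $\Delta f=0$. Your version is simply more explicit about the enumeration and the mod~$2$ binomial arithmetic (in particular the vanishing of $\binom{4}{2}$), which the paper leaves to the reader.
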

\begin{proof}
Note that $M'x_j^2$ appears in both $\Delta(M)$ and $\Delta
(M'y_j^2)$ with coefficient $1$. Since these are the only
monomials in $S$ that produce $M'x_j^2$ after applying $\Delta$,
the result follows. The final statement follows from the fact that
$M'y_j^3x_j$ is the only monomial in $S$ that produces
$M'y_j^2x_j^2$ after applying $\Delta$.
\end{proof}

\section{Block HSOPs}
\label{block}

In this section, $G$ is an arbitrary finite group, $\field$ is a field of characteristic $p$ for some prime number $p$
 dividing the order of $G$ and $V$ is a finite dimensional $\field G$-module.
Suppose we have a homogeneous system of parameters $\mathcal S
=\{h_1,\ldots,h_n\}$ for $\field[V]^G$. Let $A$ denote the algebra
generated by $\mathcal S$ and let $I$ denote the ideal
$(h_1,\ldots,h_n)\field[V]$. Further suppose that there exists a
term order for which $\mathcal S$ is a Gr\"obner basis for $I$ and
the reduced monomials are the monomial factors of a given
monomial, say $\beta$. Then the monomial factors of $\beta$ are a
basis for $\field[V]$ as a free $A$-module; in the language of
\cite{CHSW}, we have a block basis for $\field[V]$ over $A$. In
this situation, we will refer to $\mathcal S$ as a {\it block
hsop} and $\beta$ as the {\it top class}. Note that if the
elements of $\{\lm(h_1),\ldots,\lm(h_n)\}$ are pair-wise
relatively prime then $\mathcal S$ is a block hsop and the top
class is the unique maximal reduced monomial.

\begin{theorem}\label{block-hsop-thm}
 Suppose $\mathcal S=\{h_1,\ldots,h_n\}$ is a block hsop with top class $\beta$.
If $\tr(\beta)$ is indecomposable in $\field[V]^G$, then
\begin{enumerate}
\item[(a)] the Noether number for $V$ is $\deg(\beta)$;
\item[(b)] the Hilbert ideal of $V$ is generated by $\mathcal S$.
\end{enumerate}
\end{theorem}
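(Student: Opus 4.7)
The plan is to take Symonds' Corollary~0.3 as the main technical input and then exploit the indecomposability of $\tr(\beta)$ to pin down both the Noether number and the Hilbert ideal. Set $A:=\field[\mathcal{S}]$. The block hsop hypothesis gives that $\field[V]$ is a free $A$-module with homogeneous basis $\{M_1,\ldots,M_s\}$, the monomial factors of $\beta$, whose top degree is $\deg(\beta)$.

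For part (a), Symonds' corollary then implies that $\field[V]^G$ is generated as an $A$-module by homogeneous elements of degree at most $\deg(\beta)$. Combined with the observation that each $h_i\in\mathcal{S}$ itself has $\deg(h_i)\le\deg(\beta)$ (which I would verify from the Gr\"obner basis structure of the block hsop), this yields $\field[V]^G$ generated as an $\field$-algebra in degrees $\le\deg(\beta)$, so the Noether number is at most $\deg(\beta)$. For the matching lower bound, the hypothesis that $\tr(\beta)$ is indecomposable, i.e.\ $\tr(\beta)\notin(\field[V]^G_+)^2$, forces $\tr(\beta)$ to appear in every minimal homogeneous generating set of $\field[V]^G$, so the Noether number is at least $\deg(\beta)$.

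For part (b), the inclusion $(\mathcal{S})\field[V]\subseteq\field[V]^G_+\cdot\field[V]$ is immediate. For the reverse, I need to show every homogeneous invariant of positive degree lies in $A_+\field[V]=(\mathcal{S})\field[V]$. Using the algebra generating set supplied by part (a), it suffices to check this on a choice of $A$-module generators $e_1,\ldots,e_r$ of $\field[V]^G$ in positive degree $\le\deg(\beta)$ (the $h_i$ are trivially in $A_+\field[V]$). I would analyze the image of each $e_j$ in the quotient $\field[V]/A_+\field[V]$, which by the free basis structure has $\{M_1,\ldots,M_s\}$ as an $\field$-basis; each such image is a $G$-invariant element of positive degree in this quotient. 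The indecomposability of $\tr(\beta)$ then enters as follows: any additional indecomposable algebra generator of positive degree survives in this quotient, so if some $e_j$ had nonzero image then it would provide an indecomposable class in $\field[V]^G_+/(\field[V]^G_+)^2$ independent of the class of $\tr(\beta)$, contradicting a counting of indecomposables coming from the block basis.

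The main obstacle is precisely this last counting step: deducing from the single assumption that $\tr(\beta)$ is indecomposable that no positive-degree invariant has nonzero image in $\field[V]/A_+\field[V]$. I expect this to hinge on the interplay between the transfer applied to the block basis and the indecomposability quotient $\field[V]^G_+/(\field[V]^G_+)^2$; concretely, one shows that the span of the images $\overline{\tr(M_i)}$ in $\field[V]/A_+\field[V]$ is controlled by $\overline{\tr(\beta)}$ alone, so that any further indecomposable invariant would have to reduce to $\tr(\beta)$ modulo $A_+\field[V]$ plus lower-degree products, and hence could not contribute an independent generator.
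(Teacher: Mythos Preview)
Your argument for part~(a) is essentially the paper's: Symonds' bound gives the upper bound, and indecomposability of $\tr(\beta)$ gives the lower bound.

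For part~(b), however, your proposal has a genuine gap exactly where you flag it. The claim that a positive-degree invariant with nonzero image in $\field[V]/A_+\field[V]$ would yield an indecomposable class in $\field[V]^G_+/(\field[V]^G_+)^2$ \emph{independent} of $\tr(\beta)$ is not justified, and there is no ``counting of indecomposables coming from the block basis'' to contradict: the hypothesis gives you information about exactly one indecomposable, $\tr(\beta)$, and says nothing about how many others there might be. Your final paragraph suggests reducing everything to $\tr(\beta)$ via the transfers $\tr(M_i)$, but since the $M_i$ range over all degrees up to $\deg(\beta)$, there is no apparent mechanism forcing all of them to be controlled by the single top transfer.

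The paper closes this gap by a different and sharper observation. Write $I=(\mathcal S)\field[V]$, $\mathfrak h$ for the Hilbert ideal, and $d=\deg(\beta)$. The crucial fact, coming directly from the block hsop hypothesis, is that $\field[V]/I$ is \emph{one-dimensional} in degree $d$, spanned by $\beta$. If $\mathfrak h\supsetneq I$, pick a homogeneous $f\in\mathfrak h\setminus I$ whose lead monomial is reduced with respect to $I$; then $\lm(f)$ divides $\beta$, and reducing $\beta$ by $\mathcal S\cup\{f\}$ forces $\beta\in\mathfrak h$ by the one-dimensionality. Now the monomials reduced with respect to $\mathfrak h$ are proper factors of $\beta$, hence all of degree $<d$, and they generate $\field[V]$ as an $\field[V]^G$-module. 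Writing $\beta=\sum_\gamma c_\gamma\gamma$ with $c_\gamma\in\field[V]^G$ and $\deg(\gamma)<d$, and applying the $\field[V]^G$-linear map $\tr$, gives $\tr(\beta)=\sum_\gamma c_\gamma\tr(\gamma)$. Since the representation is modular, $\tr(1)=0$, so every nonzero term on the right is a product of invariants of strictly smaller positive degree, contradicting indecomposability of $\tr(\beta)$. The point you were missing is that one does not compare $\tr(\beta)$ with other potential indecomposables at all; one shows instead that $\mathfrak h\supsetneq I$ forces $\tr(\beta)$ itself to decompose.
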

\begin{proof}
Proof of (a):
The indecomposability of $\tr(\beta)$ gives a lower bound on the Noether number.
The fact that $\deg(\beta)$ is also an upper bound follows from
 \cite[Corollary 0.3]{symonds}.

Proof of (b): Denote the Hilbert ideal of $V$ by $\mathfrak h$.
Since $\mathcal S\subset\field[V]^G$, we have $I\subseteq\mathfrak h$.
Suppose, by way of contradiction,  there exists $f\in\mathfrak h \setminus I$.
We may assume that $f$ is homogeneous and that $\lm(f)$ is reduced with respect to
$I$ using the chosen term order.
Therefore $\lm(f)$ divides $\beta$. Reducing $\beta$ with
respect to $\mathcal S \cup \{f\}$, produces a polynomial of degree $d:=\deg(\beta)$
with lead term less than $\beta$. However, $\field[V]/I$ in degree $d$ has dimension one.
Thus $\beta\in(h_1,\ldots,h_n,f)\field[V]\subseteq \mathfrak h$.
Let $\mathcal C$ be the reduced monomials with respect to $\mathfrak h$ using the chosen term order.
Observe that the elements of $\mathcal C$ are monomial factors of
$\beta$ with degree less than $d$. Since $\mathcal C$ generates $\field[V]$ as
an $\field[V]^G$-module, the transfer ideal, $\tr(\field[V])$, is generated
by $\{\tr(\gamma)\mid\gamma\in\mathcal C\}$ as an $\field[V]^G$-module.
Therefore, $$\tr(\beta)=\sum_{\gamma\in\mathcal C}c_\gamma\tr(\gamma)$$
for some $c_\gamma\in\field[V]^G$.
Since the representation is modular, $\tr(1)=0$. Furthermore $\deg(\tr(\gamma))<d$.
Therefore, the equation above  gives a decomposition of $\tr(\beta)$ in terms of invariants of degree less than $d$,
contradicting the indecomposability of $\tr(\beta)$.
\end{proof}

\section{Even Dimensional Representations}\label{even-sec}

In this section we consider the even dimensional representations $V_{m,\lambda}$.
For completeness, we also include a brief discussion of the regular representation in
subsection~\ref{even-examples}.
For $\lambda\in\field$, the action of $G=\langle\sigma_1,\sigma_2\rangle$ on
$\ring:=\field[V_{m,\lambda}]=\field[x_1,\ldots,x_m,y_1,\ldots,y_m]$
is given by $\sigma_i(x_j)=x_j$, $\sigma_1(y_j)=y_j+x_j$,
$\sigma_2(y_1)=y_1+\lambda x_1$ and $\sigma_2(y_j)=y_j+\lambda x_j+x_{j-1}$
for $j>1$.
Define $n_i:=y_i^2+x_iy_i$ and $u_{ij}=x_iy_j+x_jy_i$.
Then $n_i,u_{ij}\in\ring^{\sigma_1}$. A simple calculation gives
$\Delta_2(n_i)=(\lambda^2+\lambda)x_i^2+x_{i-1}^2+x_ix_{i-1}$
and $\Delta_2(u_{ij})=x_ix_{j-1}+x_{i-1}x_j$ (using the convention that $x_0=0$).
Define  $\ell:=\lfloor m/2\rfloor$ and, for $i\leq \ell$, define
$$N_i:=n_i+(\lambda^2+\lambda)\sum_{j=1}^iu_{i-j+1,i+j}
+\sum_{j=1}^{i-1}\left(u_{i-j,i+j}+u_{i-j,i+j-1}\right).$$
An explicit calculation, exploiting the fact that $\Delta_2(u_{1j})=x_1x_{j-1}$,
gives $\Delta_2(N_i)=0$. Therefore $N_i\in S^G$.
Define
$$\mathcal H:=\left\{x_1,\ldots,x_m\right\}\cup\left\{\shsop i \mid 1\leq i\leq m/2\right\}
\cup\left\{\norm{y_j}\mid m/2<j\leq m\right\}.$$

\begin{theorem}\label{hsop_thm}
 $\mathcal{H}$ is a block hsop with top class $y_1\cdots y_{\ell}y_{\ell+1}^3\cdots y_m^3$.
\end{theorem}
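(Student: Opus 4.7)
The plan is to apply the criterion stated at the end of Section~\ref{block}: once $\mathcal{H}$ is shown to be a homogeneous system of parameters whose leading monomials are pairwise coprime, it is automatically a block hsop, and the top class is the unique maximal monomial not divisible by any of these leading monomials.

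First, I would verify that $\mathcal{H}$ is an hsop. Its cardinality is $m + \ell + (m-\ell) = 2m = \dim V_{m,\lambda}$. Modulo $x_1,\ldots,x_m$, each summand of $\shsop{i}$ except $y_i^2$ carries an $x$-factor and hence vanishes, while $\norm{y_j}$ reduces to $y_j^4$ since all orbit translates of $y_j$ collapse to $y_j$. Thus the common zero locus of $\mathcal{H}$ in $V_{m,\lambda}$ is the origin. Next, I would compute the leading monomials using the observation that in the grevlex order with $x_i<y_j$, any monomial of degree $d$ involving only $y$-variables dominates any monomial of the same degree containing at least one $x$-factor: the last (i.e.\ smallest-variable) nonzero entry of the difference of exponent vectors is a negative exponent at some $x$-variable. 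Applying this, $\lm(\shsop{i}) = y_i^2$, since $y_i^2$ is the only $x$-free monomial appearing in $\shsop{i}$. For the norm, expanding
\[
\norm{y_j} = y_j(y_j+x_j)(y_j+\lambda x_j+x_{j-1})(y_j+(\lambda+1)x_j+x_{j-1})
\]
shows $y_j^4$ is the unique $x$-free monomial, so $\lm(\norm{y_j}) = y_j^4$.

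The leading monomials $\{x_1,\ldots,x_m,\,y_1^2,\ldots,y_\ell^2,\,y_{\ell+1}^4,\ldots,y_m^4\}$ are powers of distinct variables and hence pairwise coprime. The remark in Section~\ref{block} then gives that $\mathcal{H}$ is a block hsop. The unique maximal monomial not divisible by any of these leading monomials must be $x$-free, have each $y_i$ with $i\leq\ell$ to exponent at most $1$, and each $y_j$ with $j>\ell$ to exponent at most $3$; this is precisely $y_1\cdots y_\ell y_{\ell+1}^3\cdots y_m^3$, as claimed. The only real obstacle is keeping the grevlex conventions straight, but once the ``$x$-free monomials dominate'' observation is in hand the leading-monomial computation is immediate.
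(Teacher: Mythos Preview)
Your proof is correct and follows exactly the paper's approach: the paper's one-line argument simply records that $\lt(N_i)=y_i^2$ and $\lt(\norm{y_j})=y_j^4$ and invokes the pairwise-coprime criterion from Section~\ref{block}, while you spell out the supporting details (cardinality count, zero-locus verification that $\mathcal{H}$ is an hsop, the grevlex reasoning behind the leading-monomial claims, and the identification of the top class). Nothing substantive differs.
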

\begin{proof}
This follows from the fact that $\lt(N_i)=y_i^2$ and $\lt(\norm{y_j})=y_j^4$.
\end{proof}

\begin{corollary} The image of the transfer, $\tr(S)$, is the ideal in $S^G$ generated by
$$\left\{\tr(\beta)\mid \beta\ {\rm divides}\ y_1\cdots y_{\ell}(y_{\ell+1}\cdots y_m)^3\right\}.$$
\end{corollary}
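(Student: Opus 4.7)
The plan is to exploit the block hsop structure established in Theorem~\ref{hsop_thm} to reduce the computation of $\tr(S)$ to the transfers of the divisors of the top class. Let $A$ denote the subalgebra of $S$ generated by $\mathcal{H}$, and set $\beta_0 := y_1\cdots y_\ell y_{\ell+1}^3\cdots y_m^3$. By the definition of a block hsop recalled in Section~\ref{block}, the monomial factors of $\beta_0$ form a free $A$-module basis of $S$. Every element of $\mathcal{H}$ lies in $S^G$, so $A\subseteq S^G$; and since $\tr(af)=a\tr(f)$ for $a\in S^G$ and $f\in S$, the transfer $\tr\colon S\to S^G$ is $S^G$-linear and in particular $A$-linear.

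Given any $f\in S$, write $f=\sum_{\gamma\mid\beta_0}a_\gamma\gamma$ uniquely with $a_\gamma\in A$. Applying $\tr$ yields
\[
\tr(f)=\sum_{\gamma\mid\beta_0}a_\gamma\,\tr(\gamma),
\]
so $\tr(f)$ lies in the $A$-submodule, and hence the $S^G$-ideal, generated by $\{\tr(\gamma)\mid \gamma\text{ divides }\beta_0\}$. The reverse containment is immediate: each $\tr(\gamma)$ lies in $\tr(S)$, and $\tr(S)$ is itself an ideal of $S^G$ by the $S^G$-linearity just noted.

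There is essentially no obstacle once Theorem~\ref{hsop_thm} is in hand; the corollary is a clean formal consequence of the block-basis property combined with $S^G$-linearity of the transfer, and all the substantive work has already been carried out in constructing $\mathcal{H}$ and verifying it is a block hsop. The only point requiring any care is to confirm that $A\subseteq S^G$, which permits us to pull the coefficients $a_\gamma$ through $\tr$, and to recall that the relevant $S^G$-ideal is automatically closed under multiplication by elements of $S^G$, matching the statement of the corollary.
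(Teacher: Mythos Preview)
Your argument is correct and is exactly the reasoning the paper has in mind: the corollary is stated without proof immediately after Theorem~\ref{hsop_thm}, and the same mechanism (a module-generating set for $S$ over a subring of $S^G$ yields, via $S^G$-linearity of $\tr$, a generating set for $\tr(S)$) appears explicitly inside the proof of Theorem~\ref{block-hsop-thm}. There is nothing to add.
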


\begin{theorem}\label{even-indec}
For $\lambda \not\in\field_2$ and $m\geq 3$,
$\tr(y_1\cdots y_{\ell}y_{\ell+1}^3\cdots y_m^3)$ is indecomposable.
\end{theorem}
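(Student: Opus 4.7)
The plan is to prove indecomposability by contradiction. Suppose $\tr(\beta) = \sum_i h_i k_i$ with each $h_i, k_i \in \field[V_{m,\lambda}]^G$ homogeneous of positive degree strictly less than $\deg\beta = 3m - 2\ell$. I shall exhibit a specific monomial $M^\star$ whose coefficient in $\tr(\beta)$ equals $\lambda^2 + \lambda$ (non-zero since $\lambda \notin \field_2$) and argue that no such decomposition can reproduce this coefficient.

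Construction of the witness. Writing $c_j$ for the exponent of $y_j$ in $\beta$ (so $c_j = 1$ for $j \le \ell$ and $c_j = 3$ for $j > \ell$), I choose any three indices $i_1, i_2, i_3 \in \{1,\ldots,m\}$ and set
\[
M^\star \;=\; \prod_{j=1}^m y_j^{a_j} x_j^{c_j - a_j}, \qquad a_j = \begin{cases} c_j - 1, & j \in \{i_1,i_2,i_3\},\\ c_j, & \text{otherwise,}\end{cases}
\]
so that $\deg M^\star = 3m - 2\ell$. A direct multinomial expansion of $\tr(\beta) = \beta + \sigma_1(\beta) + \sigma_2(\beta) + \sigma_1\sigma_2(\beta)$, using $\sigma_1(y_j) = y_j + x_j$ and $\sigma_2(y_j) = y_j + \lambda x_j + x_{j-1}$, shows that the $\sigma_2(\beta)$-contribution to $M^\star$ is forced to pick $\lambda x_j$ exactly $c_j - a_j$ times in the $j$-th factor; indeed, the counts $d_j'$ of $x_{j-1}$-picks satisfy a recurrence $d_j' = d_{j+1}'$ coming from matching the $x_j$-exponent in $M^\star$, and $d_1' = 0$ (since $x_0 = 0$) propagates to $d_j' = 0$ for all $j$. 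Summing contributions, the coefficient of $M^\star$ in $\tr(\beta)$ is
\[
\Bigl(\prod_j \binom{c_j}{a_j}\Bigr)\bigl(1 + \lambda^3 + (\lambda+1)^3\bigr) = \lambda^2 + \lambda,
\]
using that every $\binom{1}{a}$ and $\binom{3}{a}$ is odd in characteristic $2$.

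Ruling out decompositions. The coefficient of $M^\star$ in $\sum_i h_i k_i$ equals $\sum_i \sum_{M^\star = M_1 M_2} c_{h_i}(M_1)\, c_{k_i}(M_2)$, summed over factorisations with $\deg M_j > 0$. Lemmas~\ref{ilk}, \ref{permute}, and \ref{lift} sharply constrain which monomials $M_j$ can appear in an invariant $h \in S^G \subset S^{\sigma_1}$ and with what coefficients: pure $y$-monomials require all exponents even, any $M'x_iy_j$ in $h$ forces the partner $M'x_jy_i$ in $h$ with equal coefficient, and $M' y_j^3 x_j$ is forbidden; combined with $\sigma_2$-invariance, the admissible $\lambda$-polynomial coefficients $c_{h_i}(M_j)$ belong to a small, explicit family. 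One then verifies that no sum of products $c_{h_i}(M_1)\, c_{k_i}(M_2)$ can equal $\lambda^2 + \lambda$, yielding the desired contradiction.

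The main obstacle is this final verification, a case analysis over factorisations $M^\star = M_1 M_2$. A natural organising principle is to group factorisations by how the distinguished indices $i_1, i_2, i_3$ are distributed between $M_1$ and $M_2$, exploiting the partner-monomial constraints of Lemmas~\ref{permute} and \ref{lift} to cut down the possibilities within each group. Alternatively, one might induct on $m$, handling the base case $m = 3$ (with concrete witness $y_2^2 y_3^2 x_1 x_2 x_3$) explicitly and reducing larger $m$ via a restriction of the $G$-action to a suitable submodule; this approach sidesteps some of the combinatorics at the cost of additional bookkeeping.
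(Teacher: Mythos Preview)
Your coefficient computation for the witness monomial is correct, and the overall shape of your argument---pick a monomial with nonzero coefficient in $\tr(\beta)$, then rule out all ways it could arise from a product of lower-degree invariants---matches the paper's strategy. But the substance of the proof lies entirely in the ``ruling out'' step, and there your proposal is a placeholder, not an argument. Saying ``one then verifies that no sum of products $c_{h_i}(M_1)c_{k_i}(M_2)$ can equal $\lambda^2+\lambda$'' is not a verification; it is a restatement of what must be proved.

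There are two concrete problems. First, the lemmas you cite (Lemmas~\ref{ilk}, \ref{permute}, \ref{lift}) constrain only $\sigma_1$-invariants. They say nothing about $\sigma_2$, and $\sigma_1$-invariance alone is far too weak: for instance your $M^\star$ always admits the factorisation $x_{i_1}\cdot (M^\star/x_{i_1})$ with $x_{i_1}\in S^G$, and the Section~\ref{preliminaries} lemmas do not by themselves exclude $M^\star/x_{i_1}$ from appearing in an invariant. The paper handles this by developing further tools (Lemma~\ref{shift} and Proposition~\ref{double}) that exploit $\Delta_2$ and hence genuine $G$-invariance; nothing analogous appears in your outline. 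Second, your phrasing about ``admissible $\lambda$-polynomial coefficients'' and sums not equalling $\lambda^2+\lambda$ is not well-posed: for fixed $\lambda\in\field\setminus\field_2$ the coefficients are arbitrary elements of $\field$, so the only way to force the sum to miss $\lambda^2+\lambda$ is to force every summand to vanish---i.e.\ to show that \emph{no} nontrivial factorisation $M^\star=M_1M_2$ has both factors appearing in $G$-invariants. That is precisely what the paper does, but for a carefully chosen monomial: the \emph{leading} monomial $y_1\cdots y_\ell y_{\ell+1}^3\cdots y_{m-1}^3x_m^3$, whose $x$-part is concentrated at the single index $m$. This concentration is what makes the factorisation analysis tractable and what the auxiliary lemmas are tailored to. Your $M^\star$, with $x$-degree spread over three freely chosen indices, would require a different and likely messier case analysis that you have not begun.
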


See subsection~\ref{even-indec-proof} for the proof of Theorem~\ref{even-indec}.
Combining Theorem~\ref{even-indec} with Theorem~\ref{block-hsop-thm} gives the following.

\begin{corollary} If $\lambda \not\in\field_2$ and $m\geq 3$, then
the Noether number for $V_{m,\lambda}$ is $3m-2\lfloor m/2\rfloor$ and the Hilbert ideal
is generated by $\mathcal H$.
\end{corollary}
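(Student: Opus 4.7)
The plan is essentially to combine Theorem~\ref{hsop_thm}, Theorem~\ref{even-indec}, and the general block hsop result Theorem~\ref{block-hsop-thm}. First I would record the input from Theorem~\ref{hsop_thm}: the set $\mathcal{H}$ is a block hsop for $\field[V_{m,\lambda}]^G$ with top class
\[
\beta \;=\; y_1\cdots y_{\ell}\, y_{\ell+1}^3 \cdots y_m^3.
\]
A one-line computation gives $\deg(\beta) = \ell + 3(m-\ell) = 3m - 2\ell = 3m - 2\lfloor m/2\rfloor$, which is the value claimed for the Noether number.

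Second, under the hypotheses $\lambda \notin \field_2$ and $m \geq 3$, Theorem~\ref{even-indec} asserts that $\tr(\beta)$ is indecomposable in $\ring^G$. Together with the previous step this verifies both hypotheses of Theorem~\ref{block-hsop-thm} for the block hsop $\mathcal{H}$ and top class $\beta$.

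Third, I would invoke Theorem~\ref{block-hsop-thm}: part (a) yields that the Noether number of $V_{m,\lambda}$ equals $\deg(\beta) = 3m - 2\lfloor m/2\rfloor$, and part (b) yields that the Hilbert ideal is generated by $\mathcal{H}$. Both conclusions of the corollary follow at once.

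There is no real obstacle at the level of this corollary; the substantive work sits entirely in the two inputs. Theorem~\ref{hsop_thm} is handled by the pairwise coprimality of the leading monomials $x_i$, $\lt(N_i) = y_i^2$, and $\lt(\norm{y_j}) = y_j^4$, while Theorem~\ref{even-indec}, proved separately in subsection~\ref{even-indec-proof}, is where the assumption $\lambda \notin \field_2$ is genuinely needed to preclude writing $\tr(\beta)$ as a polynomial combination of lower-degree invariants.
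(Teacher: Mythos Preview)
Your proposal is correct and matches the paper's approach exactly: the corollary is obtained by feeding the block hsop from Theorem~\ref{hsop_thm} and the indecomposability from Theorem~\ref{even-indec} into Theorem~\ref{block-hsop-thm}, with the degree count $\deg(\beta)=\ell+3(m-\ell)=3m-2\lfloor m/2\rfloor$ giving the stated Noether number.
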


Descriptions of $\ring^G$ for $m\leq 3$ are given in subsection~\ref{even-examples}.
The formula given above for the Noether number is valid for $m>1$.

For $j>1$, an explicit calculation gives
\begin{eqnarray*}
\tr(y_1y_2y_j)&=&y_1(x_2x_{j-1}+x_1x_j)+y_2x_1x_{j-1}+y_jx_1^2\\&&
+x_1x_2\left((\lambda^2+\lambda)x_j+x_{j-1}\right)+x_1^2(x_j+x_{j-1})\\
&=&u_{12}x_{j-1}+u_{1j}x_1
+\tr(y_1y_3)\left((\lambda^2+\lambda)x_j+x_{j-1}\right)\\ &&+\tr(y_1y_2)(x_j+x_{j-1}).
\end{eqnarray*}
Therefore $t_j:=u_{12}x_{j-1}+u_{1j}x_1\in\tr(S)$.

\begin{theorem}\label{field_thm} For $m>3$ and $\lambda\not\in\field_2$,
$$\field[V_{m,\lambda}]^G[x_1^{-1}]=\field[x_1,\ldots,x_m,N_1,N_2,t_3,\ldots,t_m][x_1^{-1}].$$
\end{theorem}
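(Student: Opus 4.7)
The reverse inclusion is immediate since $x_1,\ldots,x_m$ are $G$-fixed and $N_1,N_2,t_3,\ldots,t_m\in\field[V]^G$. For the forward inclusion, my plan is to change coordinates on $\field[V][x_1^{-1}]$ so that $G$ acts by decoupled translations, read off the invariants directly, and then match the resulting generators with $N_1$, $N_2$ and the $t_j$.

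Write $A:=\field[x_1^{\pm 1},x_2,\ldots,x_m]$; then $A\subseteq\field[V]^G[x_1^{-1}]$, since $G$ fixes each $x_i$. From $u_{1j}=x_1y_j+x_jy_1$ I solve for $y_j$ with $j\geq 2$, so that $\field[V][x_1^{-1}]=A[y_1,u_{12},\ldots,u_{1m}]$ is a polynomial ring over $A$ in $m$ variables. Rescaling to $Y_1:=y_1/x_1$ and $U_j:=u_{1j}/x_1^2$ for $j\geq 2$ converts the $G$-action into two $A$-linear translations: $\sigma_1$ sends $Y_1\mapsto Y_1+1$ and fixes all $U_j$, while $\sigma_2$ sends $Y_1\mapsto Y_1+\lambda$ and $U_j\mapsto U_j+x_{j-1}/x_1$.

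I would then diagonalise by setting $Y_1':=Y_1+\lambda U_2$ and, for $3\leq j\leq m$, $U_j':=U_j+(x_{j-1}/x_1)U_2$. A direct check shows that $\sigma_1$ now only translates $Y_1'$ (by $1$), $\sigma_2$ now only translates $U_2$ (by $1$), and the remaining $m-2$ variables $U_3',\ldots,U_m'$ are $G$-fixed. Taking Artin--Schreier invariants in each of the two decoupled coordinates over $A$ yields
\[
\field[V]^G[x_1^{-1}] = A\bigl[\,(Y_1')^2+Y_1',\ U_2^2+U_2,\ U_3',\ldots,U_m'\,\bigr].
\]

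It remains to translate back. Short direct computations should give $t_j=x_1^3U_j'$ for $j\geq 3$ and $N_1=x_1^2\bigl((Y_1')^2+Y_1'\bigr)+\lambda^2x_1^2(U_2^2+U_2)$, recovering each $U_j'$ from $t_j$ and relating $N_1$ to the first two new generators. The decisive step is to expand $N_2$ in the new coordinates: after substituting $U_j=U_j'+(x_{j-1}/x_1)U_2$ and using $N_1$ to eliminate $(Y_1')^2+Y_1'$, I expect the coefficient of $U_2^2+U_2$ to collapse to $x_1^2$ via the characteristic-two cancellation $\lambda^2x_2^2+\lambda^2x_2^2=0$. Since $x_1^2$ is invertible in $A$, this yields $U_2^2+U_2$, and hence $(Y_1')^2+Y_1'$, as polynomials in $A[N_1,N_2,t_3,t_4]$. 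The hypothesis $m>3$ is used precisely here, so that $N_2$ contains the term $(\lambda^2+\lambda)u_{1,4}$ needed to absorb the residual $U_4$ contribution; the bookkeeping of the characteristic-two cancellations in this final expansion is the main obstacle.
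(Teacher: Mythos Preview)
Your plan is correct and leads to a valid proof. The coordinate change decouples the action into two independent Artin--Schreier translations, and a direct computation confirms your expectations: with $c=\lambda^2+\lambda$ one gets
\[
N_2 \;=\; x_1^2\bigl(U_2^2+U_2\bigr) \;+\; (x_2/x_1)^2 N_1 \;+\; x_1^{-2}(c\,x_2+x_1)\,t_3 \;+\; c\,x_1^{-1}\,t_4,
\]
so indeed $U_2^2+U_2$ and then $(Y_1')^2+Y_1'$ lie in $A[N_1,N_2,t_3,t_4]$. (The cancellation you flag is really $c\,x_2^2U_2+c\,x_2^2U_2=0$, coming from the $x_2^2(Y_1^2+Y_1)$ term in $n_2$ versus the $c\,x_2^2U_2$ term in $c\,u_{23}$; once that drops out, only $x_1^2U_2^2+x_1^2U_2$ survives in the $U_2$-part.) Your identification of where $m>3$ enters, via the $u_{14}$ term in $N_2$, is exactly right.

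This is a genuinely different route from the paper. The paper invokes the Campbell--Chuai theorem on localised invariant rings for upper-triangular actions to obtain generators $x_1,\ldots,x_m,N_G(y_1),N_1,t_3,\ldots,t_m$, and then exhibits a single subduction identity expressing $N_G(y_1)$ in terms of $N_1,N_2,t_3,t_4$, which allows the swap $N_G(y_1)\leadsto N_2$. Your approach trades that external citation for an explicit diagonalisation and the elementary fact that $\field[T]^{\ztwo}=\field[T^2+T]$ for a translation; the price is the longer bookkeeping in the final expansion of $N_2$. Conceptually your argument makes it transparent \emph{why} exactly two degree-two generators and $m-2$ degree-three generators suffice after inverting $x_1$: the action splits into two rank-one pieces plus an $(m-2)$-dimensional fixed subspace.
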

\begin{proof}
We use \cite[Theorem~2.4]{CampChuai}. $\field[x_1,\ldots,x_m,y_1]^G$ is the polynomial algebra generated by
$\{x_1,\ldots,x_m,N_G(y_1)\}$. Since
$N_1=y_1^2+x_1y_1+(\lambda^2+\lambda)(x_1y_2+x_2y_1)$, we see that $N_1\in\field[x_1,x_2,y_1,y_2]$ is degree $1$ in $y_2$
with coefficient $(\lambda^2+\lambda)x_1$. Using the equation above, $t_j\in\field[x_1,\ldots,x_m,y_1,y_2,y_j]$
is degree $1$ in $y_j$ with coefficient $x_1^2$. Thus
$\ring^G[x_1^{-1}]=\field[x_1,\ldots,x_m,N_G(y_1),N_1,t_3,\ldots,t_m][x_1^{-1}]$.
To complete the proof,
we need only rewrite $N_G(y_1)$  in terms of $N_2$ and the other generators. An explicit calculation
gives $$N_G(y_1)=y_1^4+x_1^2y_1^2(\lambda^2+\lambda+1)+x_1^3y_1(\lambda^2+\lambda).$$
Define $c:=\lambda^2+\lambda$.
Subduction gives
$$\norm{y_1}=N_1^2+((cx_2)^2+cx_1^2)N_1+(cx_1)^2N_2+(c^3x_2+c^2x_1)t_3
+c^3x_1t_4,$$
as required.
\end{proof}


\begin{remark} For $m>3$ and $\lambda\not\in\field_2$, it follows from Theorem~\ref{field_thm} and Theorem~\ref{hsop_thm},
that $\ring^G$ is the normalisation of the algebra generated by
$\mathcal{B}:=\mathcal{H}\cup\{t_3,\ldots,t_m\}$. Furthermore, applying the
SAGBI/Divide-by-$x$ algorithm of \cite{CSW} with $x=x_1$ to $\mathcal{B}$ computes a SAGBI basis for
$\ring^G$.
\end{remark}

Using the familiar formula for the group cohomology of a cyclic group,
we have
$$H^1\left(\langle\sigma_2\rangle,\Delta_1 S\right)\cong\left(\Delta_1 S\right)^{\sigma_2}/\Delta_2\Delta_1 S
=\left(\Delta_1 S\right)^{\sigma_2}/\tr S$$
and
$H^1\left(\langle\sigma_1\rangle,\Delta_2 S\right)\cong\left(\Delta_2 S\right)^{\sigma_1}/\tr S.$
Note that $H^1\left(\langle\sigma_1\rangle,\Delta_2 S\right)$ and $H^1\left(\langle\sigma_2\rangle,\Delta_1 S\right)$
are both finitely generated $S^G$-modules and, therefore,
are also finitely generated over the algebra generated by $\mathcal H$.
In the following $\sqrt{\tr\ring}$ denotes the radical of the image of the transfer.

\begin{proposition}\label{even-coh-prop}
For $\lambda\not\in\field_2$, $(\Delta_2\ring)^{\sigma_1}=(\Delta_1S)^{\sigma_2}
=\left(\left(x_1,\ldots,x_m\right)\ring\right)^G=\sqrt{\tr\ring }$ and
$$\sqrt{\tr \ring}/\tr \ring\cong H^1(\langle\sigma_2\rangle,\Delta_1\ring)\cong H^1(\langle\sigma_1\rangle,\Delta_2\ring).$$
Furthermore $S^G/\sqrt{\tr S}\cong \field[N_1,\ldots,N_{\ell},\norm{y_{\ell+1}},\ldots,\norm{y_m}]$.
\end{proposition}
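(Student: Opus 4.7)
The plan is to handle the three clauses in turn: (i) the chain of equalities, (ii) the cohomology isomorphisms, and (iii) the quotient description. The first two reduce to Corollary~\ref{del-lem}, the identity $\tr=\Delta_1\Delta_2$ (valid in characteristic $2$ for the Klein four group), and the standard cyclic-group cohomology formula; the last uses the block hsop from Theorem~\ref{hsop_thm} together with a coefficient-chasing argument in the block basis.

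For (i), Corollary~\ref{del-lem} applied to $\sigma_1$ gives $\Delta_1S=((x_1,\ldots,x_m)S)^{\sigma_1}$; taking $\sigma_2$-invariants yields $(\Delta_1S)^{\sigma_2}=((x_1,\ldots,x_m)S)^G$. Since $\lambda\not\in\field_2$ implies $\lambda\neq 0$, the invertible substitution $x'_j:=\lambda x_j+x_{j-1}$ (with $x_0:=0$) satisfies $\sigma_2(y_j)=y_j+x'_j$, so Corollary~\ref{del-lem} in the new variables gives $\Delta_2S=((x'_1,\ldots,x'_m)S)^{\sigma_2}=((x_1,\ldots,x_m)S)^{\sigma_2}$ (the two $x$-ideals coincide), and taking $\sigma_1$-invariants produces $(\Delta_2S)^{\sigma_1}=((x_1,\ldots,x_m)S)^G$. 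For $\sqrt{\tr S}=((x_1,\ldots,x_m)S)^G$, the identity $\tr=\Delta_1\Delta_2$ gives $\tr S\subseteq\Delta_1S\cap S^G\subseteq((x_1,\ldots,x_m)S)^G$; since $(x_1,\ldots,x_m)S$ is prime in $S$ and $S^G\hookrightarrow S$ is integral, the contraction $((x_1,\ldots,x_m)S)^G$ is prime, hence radical, so $\sqrt{\tr S}\subseteq((x_1,\ldots,x_m)S)^G$. Conversely, a direct computation gives $\tr(y_ay_b)=x_{a-1}x_b+x_ax_{b-1}$ (with $x_0:=0$), whence $x_1^2=\tr(y_1y_2)\in\tr S$, and a short induction using $\tr(y_1y_j)=x_1x_{j-1}$ and $\tr(y_iy_{i+1})=x_{i-1}x_{i+1}+x_i^2$ produces $x_ix_j\in\tr S$ for all $i,j$; hence each $x_i\in\sqrt{\tr S}$, and going-up/down delivers $((x_1,\ldots,x_m)S)^G\subseteq\sqrt{\tr S}$. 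Part (ii) is immediate from $H^1(\langle\sigma_i\rangle,M)\cong M^{\sigma_i}/(\sigma_i-1)M$ applied to $M=\Delta_jS$ for $\{i,j\}=\{1,2\}$: by (i) the numerator is $\sqrt{\tr S}$ and the denominator $\Delta_i\Delta_jS$ equals $\tr S$.

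For (iii), reduction mod $(x_1,\ldots,x_m)$ induces an injection $\phi\colon S^G/\sqrt{\tr S}\hookrightarrow\field[y_1,\ldots,y_m]$, and its image contains the polynomial subring $B_0:=\field[y_1^2,\ldots,y_\ell^2,y_{\ell+1}^4,\ldots,y_m^4]$ via $\shsop i\mapsto y_i^2$ and $\norm{y_j}\mapsto y_j^4$; the task is to show equality. Theorem~\ref{hsop_thm} gives $S=\bigoplus_\mu A\mu$ with $A:=\field[\mathcal H]$ and $\mu$ ranging over divisors of $\beta=y_1\cdots y_\ell y_{\ell+1}^3\cdots y_m^3$, and this descends mod $(x_1,\ldots,x_m)$ to a free decomposition $\field[y_1,\ldots,y_m]=\bigoplus_\mu B_0\cdot\mu$. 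Writing $f\in S^G$ as $\sum_\mu a_\mu\mu$, Lemma~\ref{ilk}(i) for $\sigma_1$ forces every pure-$y$ monomial in $f$ to have even $y$-exponents, which kills the mod-$(x)$ contribution of every $\mu$ with some $b_i(\mu)=1$ for $i\leq\ell$ or any odd $c_j(\mu)$ for $j>\ell$, leaving $\mu=\prod_{j>\ell}y_j^{c_j}$ with $c_j\in\{0,2\}$. The main obstacle---and the point where $\lambda\not\in\field_2$ is essential---is eliminating the residual $c_j=2$ cases. I plan to extract the $\mu=1$ component of $\sigma_2(f)-f=0$ in the block basis: with $\nu_k:=\lambda x_k+x_{k-1}$ (so $\sigma_2(y_k)=y_k+\nu_k$), the contribution from $a_{y_j^2}\,y_j^2$ is $a_{y_j^2}(\lambda^2x_j^2+x_{j-1}^2)$, and comparing the degree-$2$-in-$x$ part gives $\sum_{j>\ell}\bar a_{y_j^2}(\lambda^2x_j^2+x_{j-1}^2)=0$ in $A$; separating by $x$-monomials and descending from $j=m$ down to $j=\ell+1$ (using $\lambda^2\neq 0$) forces $\bar a_{y_j^2}=0$ for all $j>\ell$. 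An analogous induction on the total degree of $\mu$ (and the corresponding degree in $x$'s from $\prod\nu_j^{c_j(\mu)}$) eliminates all remaining $\mu\neq 1$. Once $\bar a_\mu=0$ for $\mu\neq 1$, we conclude $\bar f\in B_0$, so $\phi$ surjects onto $B_0$ and yields the required isomorphism.
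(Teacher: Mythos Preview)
Your argument for parts (i) and (ii) is mostly sound, but there is a gap in (i): the quadratic transfers $\tr(y_ay_b)=x_{a-1}x_b+x_ax_{b-1}$ are \emph{independent of $\lambda$}, so no induction using only these can show $x_m\in\sqrt{\tr S}$. Indeed the paper records that for $\lambda=0$ one has $\sqrt{\tr S}=((x_1,\ldots,x_{m-1})S)^G$, so $x_m\notin\sqrt{\tr S}$ in that case. You need an extra transfer that sees $\lambda^2+\lambda$, e.g.\ $\tr(y_m^3)=(\lambda^2+\lambda)x_m^3+x_m^2x_{m-1}+x_mx_{m-1}^2$, before the inclusion $((x_1,\ldots,x_m)S)^G\subseteq\sqrt{\tr S}$ goes through. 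The paper sidesteps this by invoking the general formula $\sqrt{\tr S}=\bigcap_{|g|=2}(((g-1)V^*)S)^G$ from \cite{modular-transfer}, together with the observation that $\Delta_1V^*=\Delta_2V^*=(\sigma_1\sigma_2-1)V^*$ when $\lambda\notin\field_2$.

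The more serious problem is in (iii). Your extracted equation $\sum_{j>\ell}\bar a_{y_j^2}(\lambda^2x_j^2+x_{j-1}^2)=0$ is not what the $\mu'=1$ component of $\sigma_2(f)-f$ actually gives at $x$-degree $2$. The block coefficients $a_{y_k}$ for $|\mu|=1$ satisfy $\bar a_{y_k}=0$, but that only means $a_{y_k}\in(x_1,\ldots,x_m)A$; their $x$-degree-$1$ parts $a_{y_k}^{(1)}$ contribute $\sum_k a_{y_k}^{(1)}\nu_k$ to the same $x$-degree-$2$ piece, and you have no control over these. So the descending induction you sketch does not start, and the same obstruction recurs at every level of your ``analogous induction on the total degree of $\mu$''. (In small examples the combined $\sigma_1$- and $\sigma_2$-equations together with these extra unknowns \emph{do} force $\bar a_{y_j^2}=0$, so the approach may be salvageable, but that is a substantially different and longer argument than the one you wrote.)

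The paper takes a different route for (iii): it invokes Proposition~\ref{double}, which says that for $\lambda\notin\field_2$ no monomial of the form $\prod_{i\in I}y_i^2$ with $\max I>\ell$ appears in a $G$-invariant. With this in hand one argues by minimal counterexample on the grevlex leading monomial: if $f\in S^G$ is reduced modulo $\sqrt{\tr S}$ then $\lm(f)=\prod y_i^{a_i}$ with all $a_i$ even, and Proposition~\ref{double} forces either some $a_i\geq 2$ with $i\leq\ell$ or some $a_j\geq 4$ with $j>\ell$, so one of $\lt(N_i)$, $\lt(\norm{y_j})$ divides $\lm(f)$; dividing and using uniqueness of remainder preserves $G$-invariance and lowers the lead monomial. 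This is where the real content lies, and your block-basis argument does not yet supply an equivalent.
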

\begin{proof} For $\lambda\not\in\field_2$,
$$\Delta_1 V_{m,\lambda}^*=\Delta_2 V_{m,\lambda}^*=(\sigma_1\sigma_2+1)V_{m,\lambda}^*={\rm Span}_{\field}\{x_1,\ldots,x_m\}.$$
Using
\cite[Theorem~2.4]{modular-transfer} (see also \cite[Theorem~2.4]{feshbach}),
$\sqrt{\tr\ring}=\left((x_1,\ldots,x_m)\ring\right)^G$.
Applying Corollary~\ref{sigma-gen} with $\sigma=\sigma_1$ gives
$\Delta_1\ring=\left(\left(x_1,\ldots,x_m\right)\ring\right)^{\sigma_1}$.
Thus $(\Delta_1\ring)^{\sigma_2}=\left((x_1,\ldots,x_m)\ring\right)^G$.
Applying the Corollary~\ref{sigma-gen} with $\sigma=\sigma_2$ gives
 $(\Delta_2\ring)^{\sigma_1}=\left((x_1,\ldots,x_m)\ring\right)^G$.

To prove the final statement, first observe that
$$\mathcal N:=\{N_1,\ldots,N_{\ell},\norm{y_{\ell+1}},\ldots,\norm{y_m}\}$$ is algebraically independent modulo
$\sqrt{\tr\ring}$. Therefore, there is a subalgebra of  $S^G/\sqrt{\tr S}$ isomorphic
to $A:= \field[N_1,\ldots,N_{\ell},\norm{y_{\ell+1}},\ldots,\norm{y_m}]$. We will show that for every
$f\in\ring^G$, there exists $F\in A$ with $f-F\in\sqrt{\tr S}$.
We proceed by minimal counter-example.
Without loss of generality, we may assume $f$ is homogeneous of positive degree.
Since $\lm(g(y_i))=y_i$ for all $g\in G$, using \cite[Theorem~3.2]{sw-comp}, there exists a finite SAGBI basis for $S^G$
and therefore a finite SAGBI-Gr\"obner basis for the ideal $\sqrt{\tr S}$.
We may assume that $f$ is reduced, i.e., equal to its normal form.
Therefore $\lm(f)=\prod_{i=1}^my^{a_i}$. Using Lemma~\ref{ilk}, each $a_i$ is even.
It follows from  Proposition~\ref{double} that $\lm(f)$ does not divide $\prod_{i=\ell+1}^my_i^2$.
Since $\lt(N_i)=y_i^2$ and $\lt(\norm{y_j})=y_j^4$,
there exits $N\in\mathcal N$  with $\lt(N)=y_k^{b_k}$ dividing $\lm(f)$.
Note that $N=y_k^{b_k}+\widetilde{N}$ for some $\widetilde{N}\in(x_1,\ldots,x_m)\ring$.
Since $N$ is monic as a polynomial in $y_k$, we can divide $f$ by $N$ to get
$f=qN+r$ for unique $q,r\in\ring$ with $\deg_{y_k}(r)< \deg_{y_k}(N)=b_k$. Furthermore, since we are using
grevlex with $x_i<y_k$, we have $\lm(r)<\lm(f)$. Applying $g\in G$, gives
$f=g(f)=g(q)N+g(r)$. However, $\deg_{y_k}(g(r))\leq\deg_{y_k}(r)$. Therefore, by the uniqueness of the remainder,
$g(r)=r$ and $g(q)=q$. Thus $q,r\in\ring^G$ with $q<f$ and $r<f$. By the minimality of $f$, there exists
$F_1,F_2\in A$ with $q-F_1,r-F_2\in\sqrt{\tr S}$. Therefore $F:=NF_1-F_2\in A$ and $f-F\in\sqrt{\tr S}$,
giving the required contradiction.
\end{proof}

While $V_{m,0}$ and $V_{m,1}$ are not equivalent representations, the automorphism of $G$ which fixes $\sigma_1$ and exchanges
$\sigma_2$ and $\sigma_1\sigma_2$, takes $V_{m,0}$ to $V_{m,1}$. Therefore $\field[V_{m,0}]^G\cong\field[V_{m,1}]^G$. Hence, to compute
 $\field[V_{m,\lambda}]^G$ with $\lambda\in\field_2$, it is sufficient to take $\lambda=0$.

Substituting $\lambda=0$ into the expression for $N_i$ given above, gives an element in $\field[V_{m,0}]^G$ with lead term
$y_i^2$ for $i\leq \lceil m/2\rceil$. Define $\ell':= \lceil m/2\rceil$ and
$$\mathcal H':=\left\{x_1,\ldots,x_m\right\}\cup\left\{\shsop i \mid 1\leq i\leq (m+1)/2\right\}
\cup\left\{\norm{y_j}\mid (m+1)/2< j\leq m\right\}.$$
Looking at lead terms gives the following.

\begin{theorem}\label{zero-hsop_thm}
 For $\lambda\in\field_2$, $\mathcal{H}'$ is a block hsop with top class $y_1\cdots y_{\ell'}y_{\ell'+1}^3\cdots y_m^3$.
\end{theorem}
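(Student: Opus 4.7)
The plan is to parallel the argument of Theorem~\ref{hsop_thm}. When $\lambda\in\field_2$ we have $\lambda^2+\lambda=0$, so the middle sum in the defining formula for $N_i$ vanishes and
$$N_i=n_i+\sum_{j=1}^{i-1}\bigl(u_{i-j,i+j}+u_{i-j,i+j-1}\bigr).$$
The middle sum previously forced the constraint $2i\le m$; without it, the largest index appearing is $2i-1$, so the formula is well-defined for all $i\le\ell'=\lceil m/2\rceil$, which is exactly the range used in $\mathcal{H}'$.

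First I would check that $N_i\in\ring^G$ in this extended range. Since $n_i$ and each $u_{a,b}$ lie in $\ring^{\sigma_1}$, so does $N_i$, and it suffices to verify $\Delta_2(N_i)=0$. Using $\Delta_2(n_i)=x_{i-1}^2+x_ix_{i-1}$ (as $\lambda^2+\lambda=0$) and $\Delta_2(u_{a,b})=x_ax_{b-1}+x_bx_{a-1}$ (valid in characteristic $2$ for any $\lambda$, with $x_0:=0$), expand the outer sum into four single sums. Two of them match term-for-term after shifting the summation index, except for their boundary terms, and their sum in characteristic $2$ collapses to $x_{i-1}x_i$; the other two collapse similarly to $x_{i-1}^2$. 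Together these cancel $\Delta_2(n_i)$, giving $\Delta_2(N_i)=0$. This is just the $\lambda\in\field_2$ shadow of the calculation used in Theorem~\ref{hsop_thm}, and no new idea is required.

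Once invariance is in hand, the theorem follows from the lead-term remark preceding Theorem~\ref{block-hsop-thm}. In the grevlex order used by the paper, $\lm(N_i)=y_i^2$ (the leading monomial $y_i^2$ of $n_i$ dominates every degree-$2$ monomial of the form $x_ay_b$ contributed by the $u_{a,b}$), $\lm(\norm{y_j})=y_j^4$, and $\lm(x_i)=x_i$. These $2m$ lead monomials are pair-wise relatively prime, so $\mathcal{H}'$ is a block hsop and its top class is the unique maximal monomial not divisible by any $\lm(h)$, namely $y_1\cdots y_{\ell'}y_{\ell'+1}^3\cdots y_m^3$.

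The only step with any real content is the invariance verification for the new values $\ell<i\le\ell'$ (nonempty only when $m$ is odd, where $i=\ell'=(m+1)/2$); the lead-term observation is then immediate from the structure of $\mathcal{H}'$.
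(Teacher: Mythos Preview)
Your proposal is correct and follows exactly the approach the paper takes. The paper's entire proof is the one-line remark ``Looking at lead terms gives the following,'' relying on the sentence just before the theorem that substituting $\lambda=0$ into the formula for $N_i$ gives an invariant with lead term $y_i^2$ for $i\le\lceil m/2\rceil$; you have simply spelled out the telescoping verification of $\Delta_2(N_i)=0$ and the range check $2i-1\le m$ that the paper leaves implicit.
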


\begin{theorem}\label{zero-even-indec}
For $\lambda \in\field_2$ and $m>3$,
$\tr(y_1\cdots y_{\ell'}y_{\ell'+1}^3\cdots y_m^3)$ is indecomposable.
\end{theorem}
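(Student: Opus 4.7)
The plan is to mimic the proof of Theorem~\ref{even-indec}, adapting it to the degenerate parameter $\lambda \in \field_2$. The automorphism of $G$ that fixes $\sigma_1$ and exchanges $\sigma_2$ with $\sigma_1\sigma_2$ carries $V_{m,0}$ to $V_{m,1}$, so $\field[V_{m,0}]^G \cong \field[V_{m,1}]^G$ and it suffices to treat $\lambda = 0$. The action then simplifies to $\sigma_2(y_1) = y_1$ and $\sigma_2(y_j) = y_j + x_{j-1}$ for $j > 1$, which is markedly sparser than in the generic case.

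Suppose for contradiction that $\tr(\beta) = \sum_i f_i g_i$ with each $f_i, g_i \in \ring^G$ homogeneous of positive degree strictly less than $d := \deg(\beta) = 3m - 2\ell'$. The first step is to expand $\tr(\beta) = \beta + \sigma_1(\beta) + \sigma_2(\beta) + \sigma_1\sigma_2(\beta)$ and note that $\beta$ itself cancels, since it appears with coefficient $1$ in each of the four summands. I would then identify a specific monomial $M$ in the support of $\tr(\beta)$, obtained from $\beta$ by replacing one $y_{\ell'+1}$ factor with $x_{\ell'+1}$ (or with $x_{\ell'}$, taking the $\sigma_2$-contributions into account), and verify directly that $M$ has non-zero coefficient in $\tr(\beta)$.

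The second step is to show that $M$ cannot arise from the putative decomposition. Using Lemma~\ref{ilk}, Lemma~\ref{permute}, and Lemma~\ref{lift}, the monomials that can appear in a $\sigma_1$-invariant, and hence in $f_i$ or $g_i$, are severely restricted by parity and permutation conditions. Combined with a bidegree analysis (the $y$-degree and $x$-degree of $M$ must split between $f_i$ and $g_i$, and each piece must independently satisfy the Lemma constraints), together with the strict inequalities on $\deg f_i, \deg g_i$, one should rule out every possible splitting, yielding the desired contradiction.

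The step I expect to be the main obstacle is the second one: the combinatorial verification that no splitting of $M$ as a product of two positive-degree invariants of degree less than $d$ is possible. In the generic case the coefficient $\lambda^2 + \lambda \neq 0$ appearing in $N_i$ breaks the symmetry between $\sigma_1$ and $\sigma_2$ and makes certain monomials distinguished; for $\lambda = 0$ this coefficient vanishes and $\sigma_2$ fixes $y_1$, so one cannot reuse the choice of critical monomial from Theorem~\ref{even-indec} verbatim. I also expect the cases $m$ even and $m$ odd will need separate treatment, since $\ell' - \ell$ depends on the parity of $m$ and the ``extra'' degree-one factor $y_{\ell'}$ that appears when $m$ is odd interacts asymmetrically with $\sigma_2(y_j) = y_j + x_{j-1}$.
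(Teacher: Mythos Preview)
Your overall strategy---pick a monomial in the support of $\tr(\beta)$ and rule out every factorisation $M=M_1M_2$ with $M_1,M_2$ appearing in invariants---is exactly the paper's, but your execution has a genuine gap at the very first step: the monomial you propose is not in the support of $\tr(\beta)$.

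Take $\lambda=0$ and let $M$ be $\beta$ with one $y_{\ell'+1}$ replaced by $x_{\ell'+1}$. The only group elements that can contribute an $x_{\ell'+1}$ while leaving every other $y$-exponent intact are $\sigma_1$ and $\sigma_1\sigma_2$ (in $\sigma_2\beta$ the factor $(y_{\ell'+1}+x_{\ell'})^3$ never produces $x_{\ell'+1}$, and pulling $x_{\ell'+1}$ from the $(\ell'+2)$-factor would drop the $y_{\ell'+2}$-degree). Each contributes with coefficient $\binom{3}{1}\equiv 1$, so the total coefficient is $1+1=0$ in characteristic~$2$. The same cancellation occurs for your alternative choice $x_{\ell'}$: now only $\sigma_2$ and $\sigma_1\sigma_2$ contribute, again summing to~$0$. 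In general, any monomial obtained from $\beta$ by a \emph{single} $y\to x$ replacement is killed by this pairing; one needs a monomial with at least three $x$-factors to survive.

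The paper instead works with the grevlex \emph{leading} monomial, computed modulo $(x_1,\ldots,x_{m-2},x_{m-1}^2)$ to be
\[
y_1\cdots y_{\ell'}\,y_{\ell'+1}^3\cdots y_{m-1}^3\,x_{m-1}x_m^2,
\]
which carries three $x$-factors and genuinely appears. The subsequent factorisation analysis also does \emph{not} parallel the $\lambda\notin\field_2$ case as closely as you anticipate. The key new ingredient is the $\sigma_2$-module decomposition $V_{m,0}^*\cong (m-1)V_2\oplus 2V_1$, with the trivial summands spanned by $x_m$ and $y_1$; applying Lemma~\ref{ilk} with $\sigma=\sigma_2$ then forces the $y_i$-exponent of the $x_{m-1}$-free factor $M_2$ to be even for every $1<i<m$, hence the corresponding exponents in $M_1$ are odd. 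This single parity lemma replaces the iterated use of Lemmas~\ref{permute} and~\ref{shift} from the generic case, and the remainder is a short case split on the $x_m$-degree of $M_1$---uniform in the parity of $m$, so no even/odd split is needed.
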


See subsection~\ref{zero-even-indec-proof} for the proof of Theorem~\ref{zero-even-indec}.
Combining Theorem~\ref{zero-even-indec} with Theorem~\ref{block-hsop-thm} gives the following.

\begin{corollary} For $m>3$,
the Noether number for $V_{m,0}$ is $3m-2\lceil m/2\rceil$ and the Hilbert ideal
is generated by $\mathcal{H}'$.
\end{corollary}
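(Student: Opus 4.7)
The plan is to mirror the strategy of Theorem~\ref{even-indec} (proved in subsection~\ref{even-indec-proof}) and argue by contradiction. Set $\beta:=y_1\cdots y_{\ell'}y_{\ell'+1}^3\cdots y_m^3$ and $d:=\deg(\beta)=3m-2\ell'$, and suppose $\tr(\beta)=\sum_i f_ig_i$ with each $f_i,g_i\in\ring^G$ homogeneous of positive degree strictly less than $d$ and every product $f_ig_i$ homogeneous of degree $d$. The first step is to expand $\tr(\beta)=\sum_{g\in G}g(\beta)$ using the explicit $\lambda=0$ action---$\sigma_1(y_j)=y_j+x_j$, $\sigma_2(y_1)=y_1$, $\sigma_2(y_j)=y_j+x_{j-1}$ for $j>1$---and pick out a specific witness monomial $M$ of low $x$-degree that survives the characteristic-two cancellations. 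Because $\tr(1)=0$ the monomial $\beta$ itself does not appear in $\tr(\beta)$, and neither do several one-$x$ monomials coming from a single substitution; one must descend to monomials produced by only one coset of $\langle\sigma_1\rangle$ in $G$, taking the witness used in the $\lambda\not\in\field_2$ case and adjusting it for the extra factor $y_{\ell'}$ appearing in $\beta$ with exponent one when $m$ is odd.

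The second step is a case analysis on the possible leading-monomial pairs $(\lm(f_i),\lm(g_i))$ that could contribute $M$. Lemma~\ref{ilk} forces pure-$y$ monomials in any $\sigma_1$-invariant to have even exponents; Lemma~\ref{permute} pairs a monomial $M'x_iy_j$ with $M'x_jy_i$ and forces an even $y_i$-degree in $M'$; Lemma~\ref{lift} identifies $M'y_jx_j$ with $M'y_j^2$ and rules out $M'y_j^3x_j$. Together with the degree constraint $\deg(f_i),\deg(g_i)<d$ and $\sigma_2$-invariance, these restrict any candidate factorisation of a monomial dividing $\beta$ to a tightly controlled list. For each admissible pair I would compute the coefficient of $M$ in $f_ig_i$ and verify that the sum cannot equal the coefficient of $M$ in $\tr(\beta)$.

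The main obstacle, relative to the $\lambda\not\in\field_2$ case, is that more low-degree $G$-invariants are available when $\lambda\in\field_2$: the block hsop $\mathcal{H}'$ contains the extra element $\shsop{\ell'}$ when $m$ is odd, and the vanishing of $\lambda^2+\lambda$ eliminates several obstruction terms, producing additional $G$-invariants of degree below $d$ that could participate in a hypothetical decomposition. The delicate part of the argument will be choosing $M$ so that it cannot be reconstructed from any combination of these enlarged building blocks. I expect the balanced factorisations, in which $\deg(f_i)\approx\deg(g_i)\approx d/2$, to be the hardest, because there the leading-monomial pair $(\lm(f_i),\lm(g_i))$ has the most flexibility; these will likely require repeated use of Lemma~\ref{permute} to propagate forced partner monomials through the factors. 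Once the balanced cases are settled, the unbalanced ones should fall quickly by Lemma~\ref{lift} together with the restriction that every pure-$y$ leading monomial of a factor has even exponents.
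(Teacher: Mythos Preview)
The Corollary itself has a one-line proof in the paper: combine Theorem~\ref{zero-even-indec} (indecomposability of $\tr(\beta)$) with Theorem~\ref{block-hsop-thm}. You never invoke Theorem~\ref{block-hsop-thm}, so even if your indecomposability argument succeeded you would not have derived the conclusions about the Noether number and the Hilbert ideal. What you have written is really an attempted proof of Theorem~\ref{zero-even-indec}, not of the Corollary.

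For that indecomposability statement, your plan misses the two ideas that make the $\lambda=0$ case work. First, the leading term changes: since $\lambda^2+\lambda=0$, the monomial $y_1\cdots y_{\ell}y_{\ell+1}^3\cdots y_{m-1}^3x_m^3$ that serves as witness for $\lambda\notin\field_2$ does \emph{not} appear in $\tr(\beta)$; the paper instead works modulo $(x_1,\ldots,x_{m-2},x_{m-1}^2)S$ and finds $\lt(\tr(\beta))=y_1\cdots y_{\ell'}y_{\ell'+1}^3\cdots y_{m-1}^3x_{m-1}x_m^2$. Your proposal to ``take the witness used in the $\lambda\notin\field_2$ case and adjust it'' does not address this shift. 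Second, the crucial shortcut specific to $\lambda=0$ is that $V_{m,0}^*$, viewed as a $\langle\sigma_2\rangle$-module, splits as $(m-1)V_2\oplus 2V_1$ with $x_m$ and $y_1$ spanning the trivial summands; applying Lemma~\ref{ilk} with $\sigma=\sigma_2$ then forces $b_i$ even for $1<i<m$ in any factorisation $M_1M_2$ of the leading monomial. This single observation replaces the whole Lemma~\ref{shift}/Lemma~\ref{parity} machinery you propose to reuse, and the remaining case analysis (on $a_m\in\{0,1\}$) is short. Your sketch, by contrast, never identifies a concrete witness monomial, never states which factorisations are to be ruled out, and the ``balanced versus unbalanced'' dichotomy you anticipate does not arise in the actual argument.
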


Descriptions of $\field[V_{m,0}]^G$ for $m\leq 3$ are given in subsection~\ref{even-examples}.
The above formula for the Noether number is valid for $m>1$.

\begin{theorem}\label{zero-field_thm} For $m>2$,
$$\field[V_{m,0}]^G[x_1^{-1}]=\field[x_1,\ldots,x_m,N_1,N_2,t_3,\ldots,t_m][x_1^{-1}].$$
\end{theorem}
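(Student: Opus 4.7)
The plan is to mirror the proof of Theorem~\ref{field_thm}, applying \cite[Theorem~2.4]{CampChuai}, but with a modified sequence of ``lifters'' because the identities used for $\lambda\not\in\field_2$ collapse: here $\lambda^2+\lambda=0$, so $N_1 = y_1^2+x_1y_1$ no longer contains $y_2$, and $\norm{y_1}=N_1^2$. Set $B:=\field[x_1,\ldots,x_m,N_1,N_2,t_3,\ldots,t_m]$.

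First, since $\sigma_2$ fixes $y_1$ when $\lambda=0$, we have $\field[x_1,\ldots,x_m,y_1]^G = \field[x_1,\ldots,x_m,y_1]^{\sigma_1} = \field[x_1,\ldots,x_m,N_1]$ by Proposition~\ref{sigma-gen}, so $y_1$ is integral over $B$ via $N_1 = y_1^2+x_1y_1$. For $j\ge 3$, the identity $t_j = x_1^2 y_j + x_1 x_{j-1} y_2 + (x_2 x_{j-1}+x_1 x_j)y_1$ expresses $y_j$ as a polynomial in $y_1,y_2$ over $B[x_1^{-1}]$. To handle $y_2$, I would eliminate $y_3$ between $N_2$ (which is linear in $y_3$ with coefficient $x_1$) and $t_3$, obtaining a characteristic $2$ Artin--Schreier style identity of the form $y_2^2+x_1y_2 = t_3/x_1 + N_2 + (x_2(x_1+x_2)/x_1)\,y_1$ in $B[x_1^{-1}]$; thus $y_2$ is integral over $B[x_1^{-1}][y_1]$, hence over $B[x_1^{-1}]$. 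Together these show $\field[V][x_1^{-1}]$ is integral over $B[x_1^{-1}]$.

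A leading-term check ($\lt(N_i)=y_i^2$ for $i=1,2$ and $\lt(t_j)=x_1^2 y_j$ for $j\ge 3$) then shows $B$ is a polynomial algebra on $2m$ variables, so $B[x_1^{-1}]$ is normal. A degree count using $[\field(V):\field(V)^G]=4$ forces the field of fractions of $B$ to coincide with $\field(V)^G$, and the equality $B[x_1^{-1}] = \field[V]^G[x_1^{-1}]$ follows by normality. The main obstacle will be the $y_2$ step: when $\lambda=0$ there is no $G$-invariant linear in $y_2$ with $x_1$-unit coefficient (the $\sigma_2$-invariance condition on a form $ay_2+by_1+c$ forces $ax_1=0$), so the direct linear lifter used in Theorem~\ref{field_thm} must be replaced by the quadratic Artin--Schreier relation above, and the characteristic $2$ cancellations together with the intrusion of $y_1$ on the right-hand side must be tracked carefully to confirm the identity actually lands in $B[x_1^{-1}]$ rather than merely in its normalization.
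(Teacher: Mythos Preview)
Your argument is correct and takes a genuinely different route from the paper. The paper does not argue via integrality and normality at all: it quotes the Magma-verified description of $\field[V_{2,0}]^G[x_1^{-1}]=\field[x_1,x_2,N_1,\widetilde{w}][x_1^{-1}]$ from Remark~\ref{zero-two_remark} as a base case, then iterates \cite[Theorem~2.4]{CampChuai} using the linear-in-$y_j$ invariants $t_j$ for $j\ge 3$, and finally substitutes the identity $\widetilde{w}=x_1N_2+t_3$. Your elimination of $y_3$ between $N_2$ and $t_3$ rediscovers exactly this last identity (divided by $x_1$), so the two proofs share the same algebraic core; but you replace the appeal to the $V_{2,0}$ computation by a direct Artin--Schreier relation for $y_2$ and a normality argument. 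What your approach buys is independence from the Magma calculation; what the paper's approach buys is brevity, since Campbell--Chuai packages the normality and degree-count steps.

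One small correction: with the paper's grevlex order ($x_1$ smallest), $\lt(t_j)=x_2x_{j-1}y_1$, not $x_1^2y_j$. Your claimed leading terms are valid under a suitable weight order (weight $0$ on the $x_i$, weight $1$ on the $y_i$, ties broken lexicographically with $y_m>\cdots>y_1$), so the argument can be repaired by specifying that order. Alternatively, the leading-term check is redundant: your degree count already gives $\mathrm{Frac}(B)=\field(V)^G$, which has transcendence degree $2m$, so the $2m$ generators of $B$ are automatically algebraically independent and $B$ is a polynomial ring.
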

\begin{proof}
We construct the field of fractions for an upper-triangular action as in \cite{CampChuai}
or \cite{Kang}. From  Remark~\ref{zero-two_remark}, we see that
$\field[x_1,x_2,y_1,y_2]^G[x_1^{-1}]=\field[x_1,x_2,N_1,\widetilde{w}][x_1^{-1}]$
where  $\widetilde{w}:=(x_1+x_2)u_{12}+x_1n_2$. Since
$t_j\in\field[x_1,\ldots,x_m,y_1,\ldots,y_j]^G$
has degree one as a polynomial in $y_j$ with coefficient $x_1^2$, we have
$$\field[V_{m,0}]^G[x_1^{-1}]=\field[x_1,\ldots,x_m,N_1,\widetilde{w},t_3,\ldots,t_m][x_1^{-1}].$$
The result then follows from the relation $\widetilde{w}=x_1N_2+t_3$.
\end{proof}

\begin{remark} For $m>2$ it follows from Theorem~\ref{zero-field_thm} and Theorem~\ref{zero-hsop_thm},
that $\field[V_{m,0}]^G$ is the normalisation of the algebra generated by
$\mathcal{B}':=\mathcal{H}'\cup\{t_3,\ldots,t_m\}$. Furthermore, applying the
SAGBI/Divide-by-$x$ algorithm of \cite{CSW} with $x=x_1$ to $\mathcal{B}'$ computes a SAGBI basis for
$\field[V_{m,0}]^G$.
\end{remark}

\begin{proposition} For $\lambda=0$:
\begin{eqnarray*}
\sqrt{\tr\ring}&=&\left(\left(x_1,\ldots,x_{m-1}\right)\ring\right)^G,\cr
H^1(\langle\sigma_1\rangle,\Delta_2\ring)&\cong&\left(\left(x_1,\ldots,x_{m-1}\right)\ring\right)^G/\tr\ring,\cr
H^1(\langle\sigma_2\rangle,\Delta_1\ring)&\cong&\left(\left(x_1,\ldots,x_{m}\right)\ring\right)^G/\tr\ring,\cr
\ring^G/\left(\left(x_1,\ldots,x_{m}\right)\ring\right)^G
&\cong& \field[N_1,\ldots,N_{\ell'},\norm{y_{\ell'+1}},\ldots,\norm{y_m}].
\end{eqnarray*}
\end{proposition}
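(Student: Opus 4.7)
The plan is to parallel the proof of Proposition~\ref{even-coh-prop}, tracking the essential asymmetry that for $\lambda=0$ we have $\sigma_2(y_1)=y_1$ and $\sigma_2(y_j)=y_j+x_{j-1}$ for $j>1$. A direct computation gives $\Delta_1 V^*=(\sigma_1\sigma_2+1)V^*=\mathrm{Span}_\field\{x_1,\ldots,x_m\}$ while $\Delta_2 V^*=\mathrm{Span}_\field\{x_1,\ldots,x_{m-1}\}$, the latter being strictly smaller because $\Delta_2 y_1=0$. Consequently $V^{\sigma_1}\subseteq V^{\sigma_2}$ and $V^{\sigma_1\sigma_2}\subseteq V^{\sigma_2}$, so the union of the three fixed subspaces collapses to $V^{\sigma_2}$ alone, whose vanishing ideal in $\ring$ is $(x_1,\ldots,x_{m-1})\ring$. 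Feshbach's theorem (\cite[Theorem~2.4]{modular-transfer}; see also \cite[Theorem~2.4]{feshbach}) then yields the first claim $\sqrt{\tr\ring}=((x_1,\ldots,x_{m-1})\ring)^G$.

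Next I would identify $(\Delta_i\ring)^{\sigma_{3-i}}$. Corollary~\ref{del-lem} applied to $\sigma_1$ gives $\Delta_1\ring=((x_1,\ldots,x_m)\ring)^{\sigma_1}$; taking $\sigma_2$-invariants yields $(\Delta_1\ring)^{\sigma_2}=((x_1,\ldots,x_m)\ring)^G$. An analogous application of the same corollary to $\sigma_2$ (treating $y_1$ and $x_m$ as $\sigma_2$-fixed free variables and reindexing the remaining $y_j$'s so that $\sigma_2$ takes the canonical form of Section~\ref{preliminaries}) gives $\Delta_2\ring=((x_1,\ldots,x_{m-1})\ring)^{\sigma_2}$, so $(\Delta_2\ring)^{\sigma_1}=((x_1,\ldots,x_{m-1})\ring)^G$. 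Both cohomology isomorphisms then follow at once from the standard identity $H^1(\langle\sigma\rangle,M)=M^\sigma/\Delta_\sigma M$ (valid for $|\langle\sigma\rangle|=2$ in characteristic~$2$) together with $\tr=\Delta_1\Delta_2=\Delta_2\Delta_1$.

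For the final isomorphism I would mimic the minimal counter-example argument of Proposition~\ref{even-coh-prop}. The leading monomials of $\mathcal{N}:=\{N_1,\ldots,N_{\ell'},\norm{y_{\ell'+1}},\ldots,\norm{y_m}\}$ are the pairwise coprime $y_i^2$ and $y_j^4$, so $\mathcal{N}$ is algebraically independent modulo $((x_1,\ldots,x_m)\ring)^G$ and generates a polynomial subalgebra $A$ of $\ring^G/((x_1,\ldots,x_m)\ring)^G$. For surjectivity, take a homogeneous counter-example $f\in\ring^G$ whose leading monomial is minimal; using a SAGBI basis of $\ring^G$ (\cite[Theorem~3.2]{sw-comp}) one may assume $f$ is reduced, hence $\lm(f)$ is a pure $y$-monomial with all even exponents (Lemma~\ref{ilk}). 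The $\lambda=0$ analog of Proposition~\ref{double} then forces $\lm(f)$ not to divide $\prod_{i=\ell'+1}^m y_i^2$, so some $\lt(N)=y_k^{b_k}$ with $N\in\mathcal{N}$ divides $\lm(f)$. Monic division $f=qN+r$ with $\deg_{y_k}(r)<b_k$, together with uniqueness of the polynomial remainder under the $G$-action, produces $q,r\in\ring^G$ with strictly smaller leading monomials, contradicting minimality.

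The principal obstacle is this last step: verifying that the $\lambda=0$ analog of Proposition~\ref{double} really applies at the ceiling index $\ell'=\lceil m/2\rceil$, which is one larger than the $\ell=\lfloor m/2\rfloor$ appearing in the $\lambda\notin\field_2$ case. This reflects the fact that for $\lambda=0$ the invariant $N_i$ remains well-defined up to $i=(m+1)/2$ when $m$ is odd, and the leading-term bookkeeping must exploit this extra generator to cover the possible divisibilities of $\lm(f)$.
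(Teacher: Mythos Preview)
Your proposal is correct and follows essentially the same approach as the paper: compute the images $(g-1)V_{m,0}^*$, apply \cite[Theorem~2.4]{modular-transfer} for the radical, use Corollary~\ref{del-lem} (with the reindexing you describe for $\sigma_2$) to identify $(\Delta_i\ring)^{\sigma_{3-i}}$, and then repeat the minimal-counter-example argument of Proposition~\ref{even-coh-prop} for the quotient. The ``principal obstacle'' you flag is not actually an obstacle: Proposition~\ref{double} is already stated for arbitrary $\lambda$, and its general conclusion $2j\le m+1$ (the one that does \emph{not} require $\lambda\notin\field_2$) is precisely the bound needed to rule out $\lm(f)$ dividing $\prod_{i=\ell'+1}^m y_i^2$, since the smallest index exceeding $\ell'=\lceil m/2\rceil$ already satisfies $2(\ell'+1)>m+1$.
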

\begin{proof} Direct calculation gives
$\Delta_1 V_{m,0}^*=(\sigma_1\sigma_2+1) V_{m,0}^*={\rm Span}_{\field}\{x_1,\ldots,x_m\}$ and
$\Delta_2 V_{m,0}^*={\rm Span}_{\field}\{x_1,\ldots,x_{m-1}\}$.
Using \cite[Theorem~2.4]{modular-transfer},
$$\sqrt{\tr\ring}=\bigcap_{g\in G,\,|g|=2}(((g-1)V_{m,0}^*)S)^G=
\left(\left(x_1,\ldots,x_{m-1}\right)\ring\right)^G.$$
The rest of the proof is analogous to the proof of Proposition~\ref{even-coh-prop}.
\end{proof}

\subsection{\rm Even Dimensional Examples}\label{even-examples}

\begin{sremark}
It follows from \cite[Theorem~3.75]{DK}, that
$\field[V_{1,\lambda}]^G$ is the polynomial ring generated by $x_1$ and $\norm{y_1}$.
\end{sremark}

Define $w:=\Delta_2(n_2)u_{12}+x_1^2n_2$. Note that $\norm{y_2}=n_2^2+n_2\Delta_2(n_2)$
and recall that $\Delta_2(n_2)=(\lambda^2+\lambda)x_2^2+x_1x_2+x_1^2$.
A simple calculation shows that $\lt(w)=(\lambda^2+\lambda)y_1x_2^3$.
Subduction gives
\begin{equation}\label{two-lambda-rel}
w^2=\Delta_2(n_2)^2x_2^2N_1+x_1^4\norm{y_2}+w\Delta_2(n_2)\left(\Delta_2(n_2)+x_1^2\right).
\end{equation}
\begin{stheorem}\label{two-lambda-thm} If $\lambda\not\in\field_2$, then $\field[V_{2,\lambda}]^G$ is the hypersurface generated by
$x_1$, $x_2$, $N_1$, $w$ and $\norm{y_2}$, subject to the above relation.
\end{stheorem}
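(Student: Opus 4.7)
The plan is to verify invariance and the stated relation explicitly, then to establish that the five elements generate $\field[V_{2,\lambda}]^G$ via a rank-and-Hilbert-series argument, culminating in the hypersurface description.

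First I would check $w \in \field[V_{2,\lambda}]^G$: since $u_{12}, n_2 \in S^{\sigma_1}$ by Proposition~\ref{sigma-gen} and $\Delta_2(n_2), x_1^2 \in \field[x_1, x_2]$ are $\sigma_1$-fixed, we have $w \in S^{\sigma_1}$, and a direct computation using $\sigma_2(u_{12}) = u_{12} + x_1^2$ and $\sigma_2(n_2) = n_2 + \Delta_2(n_2)$ gives $\sigma_2(w) = w$. Next I would verify (\ref{two-lambda-rel}) by expanding $w^2 = x_1^4 n_2^2 + \Delta_2(n_2)^2 u_{12}^2$ in characteristic $2$ and substituting $n_2^2 = \norm{y_2} + n_2 \Delta_2(n_2)$ together with the identity $u_{12}^2 = x_1^2 n_2 + x_2^2 n_1 + x_1 x_2 u_{12}$ valid in $S^{\sigma_1}$; after replacing $n_1$ by $N_1 + (\lambda^2+\lambda) u_{12}$ and collecting terms, the right-hand side of (\ref{two-lambda-rel}) appears.

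For the generation claim, set $R := \field[x_1, x_2, N_1, \norm{y_2}]$ and $A := \field[x_1, x_2, N_1, w, \norm{y_2}]$. By Theorem~\ref{hsop_thm}, $\mathcal{H}$ is a block hsop, so $\field[V_{2,\lambda}]$ is a free $R$-module of rank $8$; hence $[\mathrm{Frac}(\field[V_{2,\lambda}]^G) : \mathrm{Frac}(R)] = 8/|G| = 2$. Since $\field[V_{2,\lambda}]^G$ is Cohen--Macaulay (by \cite{El-Fl}), it is in fact free of rank $2$ over $R$. The relation (\ref{two-lambda-rel}) is monic of degree $2$ in $w$, so $A = R \oplus R w$ is free of rank $2$ over $R$ with $A \subseteq \field[V_{2,\lambda}]^G$. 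Both Hilbert series have the form $\mathrm{HS}(R)(1 + t^d)$ for some $d$, and enumerating $\sigma_2$-invariants among the degree-$2$ and degree-$3$ elements of $S^{\sigma_1}$ (using the $\sigma_2$-action on $n_1, n_2, u_{12}$ together with Lemma~\ref{ilk}) shows that $\field[V_{2,\lambda}]^G$ coincides with $R$ in degrees below $4$, forcing $d = 4 = \deg w$ and hence $A = \field[V_{2,\lambda}]^G$.

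For the hypersurface description, consider the surjection from a polynomial ring in five variables to $\field[V_{2,\lambda}]^G$ sending each variable to the corresponding generator; its image has Krull dimension $4$, so the kernel is a height-$1$ prime. The polynomial (\ref{two-lambda-rel}) is irreducible over $R$ (its $\norm{y_2}$-degree is $1$, precluding a factorisation) and lies in the kernel, hence generates it. The main obstacle is the low-degree Hilbert series comparison --- specifically, verifying that no degree-$2$ or degree-$3$ invariant lies outside $R$ --- but this reduces to a direct enumeration of $\sigma_2$-invariants in $S^{\sigma_1}$ using the formulas $\sigma_2(n_1) = n_1 + (\lambda^2+\lambda) x_1^2$, $\sigma_2(n_2) = n_2 + \Delta_2(n_2)$, and $\sigma_2(u_{12}) = u_{12} + x_1^2$.
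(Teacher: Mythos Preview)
Your argument is correct but takes a genuinely different route from the paper's. The paper proceeds via SAGBI bases and localisation: it first uses \cite[Theorem~2.4]{CampChuai} to show that $\field[V_{2,\lambda}]^G[x_1^{-1}]$ is the localised polynomial ring on $x_1,x_2,N_1,w$ (after expressing $\norm{y_1}$ in terms of $N_1$ and $w$), then observes that the single non-trivial \tat\ among the five proposed generators subducts to zero using relation~(\ref{two-lambda-rel}), and finally invokes \cite[Theorem~1.1]{CSW} to conclude that $\{x_1,x_2,N_1,w,\norm{y_2}\}$ is a SAGBI basis for $\field[V_{2,\lambda}]^G$. Your approach instead imports the Cohen--Macaulay property from \cite{El-Fl} to obtain freeness of $\field[V_{2,\lambda}]^G$ over $R=\field[x_1,x_2,N_1,\norm{y_2}]$, computes the rank as $2$ via the block hsop, and matches Hilbert series after a low-degree check. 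The paper's method yields the SAGBI basis as a by-product and stays within the computational framework developed elsewhere in the paper, without needing the depth computation from \cite{El-Fl}; your method is more structural and avoids the SAGBI machinery, but leans on an external depth result and requires the (admittedly routine) direct enumeration of invariants in degrees $2$ and $3$. One small point worth making explicit in your write-up: the freeness of $A=R\oplus Rw$ needs $w\notin R$, which follows immediately from $\lt(w)=(\lambda^2+\lambda)y_1x_2^3$ having odd $y_1$-degree, and your irreducibility argument for the relation should really be phrased in the abstract polynomial ring in five variables (degree $1$ in the variable mapping to $\norm{y_2}$, with content coprime to the $\norm{y_2}$-coefficient $x_1^4$).
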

\begin{proof} Since $N_1$ has degree $1$ in $y_2$ with coefficient $(\lambda^2+\lambda)x_1^2$,
using \cite[Theorem~2.4]{CampChuai}, we have
$\field[V_{2,\lambda}]^G[x_1^{-1}]=\field[x_1,x_2,\norm{y_1},N_1][x_1^{-1}]$.
Subduction gives
$$\norm{y_1}=N_1^2+(\lambda^2+\lambda)^2(x_2^2N_1+w)+x_1^2(w^2+w)N_1.$$
Therefore $\field[V_{2,\lambda}]^G[x_1^{-1}]=\field[x_1,x_2,N_1,w][x_1^{-1}]$.
Furthermore $\{x_1,x_2,N_1,\norm{y_2}\}$ is a block hsop. Taking
$\mathcal B :=\{x_1,x_2,N_1,w,\norm{y_2}\}$, we see that there is a single
non-trivial \tat, which subducts to $0$ using Equation~\ref{two-lambda-rel}.
Therefore, using \cite[Theorem~1.1]{CSW}, $\mathcal B$ is a SAGBI basis for
$\field[V_{2,\lambda}]^G$.
\end{proof}

It follows from Theorem~\ref{two-lambda-thm} that the Noether number for $V_{2,\lambda}$ is $4$
and the Hilbert ideal is generated by $\{x_1,x_2,N_1,\norm{y_2}\}$.

\begin{sremark}\label{zero-two_remark}
 A Magma \cite{magma} calculation
shows that $\field[V_{2,0}]^G$ is a hypersurface with
generators $x_1, x_2, n_1, \widetilde{w}:=(x_1+x_2)u_{12}+x_1n_2, \widetilde{N}_2:=n_2^2+n_2(x_1^2+x_1x_2)$ and relation
$\widetilde{w}^2+x_2^2(x_2+x_1)^2n_1+x_1x_2(x_1+x_2)\widetilde{w}=x_1^2\widetilde{N}_2$.
Therefore the Noether number for $V_{2,0}$ is $4$ and the Hilbert ideal is generated by $x_1,x_2, n_1, \widetilde{N}_2$.
Using the relation to eliminate
$\widetilde{N}_2$ gives $\field[V_{2,0}]^G[x_1^{-1}]=\field[x_1,x_2,n_1,\widetilde{w}][x_1^{-1}]$.
\end{sremark}

Define $u_{123}:=x_1(n_2+u_{12}+u_{13})+(\lambda^2+\lambda)x_2u_{13}$. Simple calculations give
$\lm(u_{123})=y_1x_2x_3$ and $\Delta_2(u_{123})=0$.

\begin{stheorem}  If $\lambda\not\in\field_2$, then
$\field[V_{3,\lambda}]^G[x_1^{-1}]=\field[x_1,x_2,x_3,N_1,u_{123},t_3][x_1^{-1}]$.
\end{stheorem}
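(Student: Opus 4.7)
The plan is to follow the Campbell--Chuai strategy used in the proof of Theorem~\ref{field_thm}, then perform a single substitution in the generating set to replace $\norm{y_1}$ by $u_{123}$.

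First I would establish the intermediate equality
$$\field[V_{3,\lambda}]^G[x_1^{-1}] = \field[x_1,x_2,x_3,\norm{y_1},N_1,t_3][x_1^{-1}].$$
Here $\field[x_1,x_2,x_3,y_1]^G = \field[x_1,x_2,x_3,\norm{y_1}]$: the variables $x_2$ and $x_3$ are $G$-fixed and $G$ acts on $\mathrm{Span}_{\field}\{x_1,y_1\}$ as the representation $V_{1,\lambda}$, so the remark about $V_{1,\lambda}$ at the start of subsection~\ref{even-examples} applies. Next, $N_1$ is linear in $y_2$ with coefficient $(\lambda^2+\lambda)x_1$, invertible once $x_1$ is inverted since $\lambda\not\in\field_2$; and $t_3 = u_{12}x_2+u_{13}x_1$ is linear in $y_3$ with coefficient $x_1^2$. \cite[Theorem~2.4]{CampChuai} then yields the displayed equality.

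Second, I would rewrite $\norm{y_1}$ as a polynomial in $x_1,x_2,x_3,N_1,u_{123},t_3$ by an explicit subduction. The orbit product gives $\norm{y_1}=y_1^4+(\lambda^2+\lambda+1)x_1^2y_1^2+(\lambda^2+\lambda)x_1^3y_1$; squaring $N_1$ with the characteristic~$2$ Frobenius accounts for $y_1^4$ and most of the $y_1^2$ contribution, at the cost of introducing $(\lambda^2+\lambda)^2 x_1^2 y_2^2$. This residual $y_2^2$ term is absorbed by $(\lambda^2+\lambda)^2 x_1\,u_{123}$, since $u_{123}$ contains the monomial $x_1y_2^2$ with coefficient $1$. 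Subductions by further $\field[x_1,x_2]$-multiples of $N_1$ and $t_3$ then clear the remaining monomials in $y_1,y_2,y_3$. No denominator in $x_1$ is introduced during this subduction, so the resulting identity holds in $\ring$ itself, and substituting it into the displayed equality above yields the theorem.

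The main obstacle is the subduction bookkeeping. The reason the $m=3$ case requires the ad hoc invariant $u_{123}$ (rather than the general pattern with $N_2$ used in Theorem~\ref{field_thm}) is that $\lfloor 3/2 \rfloor = 1$ makes $N_2$ unavailable; the property that makes $u_{123}$ the correct replacement is precisely that its $y_2^2$-coefficient equals $x_1$, matching what arises from $N_1^2$ in characteristic~$2$. The actual coefficient combination can be found either by hand or with a short computer algebra computation.
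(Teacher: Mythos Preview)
Your approach is correct and close in spirit to the paper's, but the paper takes a shorter path by recycling the $V_{2,\lambda}$ calculation. Instead of starting from $\field[x_1,x_2,x_3,\norm{y_1},N_1,t_3][x_1^{-1}]$ and subducing the degree-$4$ invariant $\norm{y_1}$, the paper invokes the already-established identity $\field[V_{2,\lambda}]^G[x_1^{-1}]=\field[x_1,x_2,N_1,w][x_1^{-1}]$ from Theorem~\ref{two-lambda-thm}, adjoins $t_3$ via Campbell--Chuai to obtain $\field[V_{3,\lambda}]^G[x_1^{-1}]=\field[x_1,x_2,x_3,N_1,w,t_3][x_1^{-1}]$, and then swaps $w$ for $u_{123}$ using the single degree-$3$ relation $w=(\lambda^2+\lambda)x_2t_3+x_1u_{123}+x_1t_3$. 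This is less bookkeeping than your subduction of $\norm{y_1}$ (which, for the record, does go through: $\norm{y_1}=N_1^2+c^2x_1u_{123}+(cx_1^2+c^2x_2^2)N_1+(c^2x_1+c^3x_2)t_3$ with $c=\lambda^2+\lambda$). The trade-off is that your argument is self-contained, while the paper's leans on the $m=2$ case; both ultimately hinge on the fact that $u_{123}$ carries the $x_1y_2^2$ monomial needed to absorb the quadratic-in-$y_2$ residue.
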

\begin{proof} From the proof of Theorem~\ref{two-lambda-thm},
 $\field[V_{2,\lambda}]^G[x_1^{-1}]=\field[x_1,x_2,N_1,w][x_1^{-1}]$.
Since $t_3$ is degree $1$ in $y_3$ with coefficient $x_1^2$, using \cite[Theorem~2.4]{CampChuai}, we have
$$\field[V_{3,\lambda}]^G[x_1^{-1}]=\field[x_1,x_2,x_3,N_1,w,t_3][x_1^{-1}].$$
An explicit calculation gives
$w=(\lambda^2+\lambda)x_2t_3+x_1u_{123}+x_1t_3$, and the result follows.
\end{proof}

With $c:=\lambda^2+\lambda$, define
$$n_{23}:=\left(n_2+u_{12}+u_{13}\right)\left(cx_3+x_2+x_1\right)+c\left(x_1n_3+x_2u_{23}+cx_3u_{23}\right),$$
$u_{133}:=x_1^{-1}(cx_3t_3+x_2u_{123})$,
$u_{2333}:=x_1^{-1}((cx_3+x_2)n_{222}+n_{23}x_2^2+x_2^2(u_{123}+t_3))$ and
$n_{222}:=x_1^{-2}(t_3^2+N_1(x_2^4+x_1^2x_3^2)+(c(x_2^3+x_1x_2x_3)+x_1x_2^2)t_3)$.

Straight-forward calculation gives $n_{23},u_{133},n_{222},u_{2333}\in\field[V_{3,\lambda}]^G$
and $\lt(n_{23})=cy_2^2x_3$, $\lt(u_{133})=cy_1x_3^2$, $\lt(n_{222})=y_2^2x_2^2$, $\lt(u_{2333})=c^2y_2x_3^3$.
Define
\begin{eqnarray*}
\mathcal{B}_{3,\lambda} &:=& \left\{x_1,x_2,x_3,N_1,t_3,u_{123},u_{133},n_{23},n_{222},u_{2333},N_G(y_2),N_G(y_3)\right\}\cr
&&\cup\ \left\{
\tr(y_1y_2y_3^3), \tr(y_1y_2^3y_3),\tr(y_2^3y_3^3),\tr(y_1y_2^3y_3^3)\right\}.
\end{eqnarray*}
Further calculation gives
$\lt(\tr(y_1y_2y_3^3))=cy_2y_1x_3^3$,
$\lt( \tr(y_1y_2^3y_3))=y_2^2y_1x_2^2$,
$\lt(\tr(y_2^3y_3^3))=cy_2^3x_3^3$,
$\lt(\tr(y_1y_2^3y_3^3))=cy_1y_2^3x_3^3$.

\begin{sremark} Suppose $\lambda\not\in\field_2$, i.e., $c\not=0$.
Applying the SAGBI/Divide-by-$x$ algorithm to $\{x_1,x_2,x_3,N_1, u_{123},t_3,\norm{y_2},\norm{y_3}\}$
produces a SAGBI basis for $\field[V_{3,\lambda}]^G$. A Magma calculation over the rational function field $\field_2(\lambda)$
shows that for generic $\lambda$, $\mathcal{B}_{3,\lambda}$ is a SAGBI basis for $\field_2(\lambda)[V_{3,\lambda}]^G$.
Since the lead coefficients of the elements of
 $\mathcal{B}_{3,\lambda}$ lie in $\{1,c,c^2\}$, the calculations could have been performed over
$\field_2[\lambda,c^{-1}]$. Therefore $\mathcal{B}_{3,\lambda}$ is a SAGBI basis for $\field[V_{3,\lambda}]^G$,
as long as $c\not=0$.
It follows from this that, for $\lambda\not\in\field_2$, the Hilbert ideal is generated by $x_1,x_2,x_3,N_1,\norm{y_2},\norm{y_3}$.
Although a SAGBI basis need not be a minimal generating set, running a SAGBI basis test on
$\mathcal B_{3,\lambda}\setminus\{\tr(y_1y_2^3y_3^3)\}$ shows that $\tr(y_1y_2^3y_3^3)$ is indecomposable
and hence the Noether number is $7$.
\end{sremark}

\begin{sremark}
A Magma calculation shows that $\field[V_{3,0}]^G$ is generated by
$$\{x_1,x_2,x_3,n_1,n_2+u_{13}+u_{12},t_3,(x_3+x_2)u_{13}+n_3x_1, \norm{y_3},\tr(y_2y_3^3),\tr(y_1y_2y_3^3)\}.$$
Furthermore, this is a SAGBI basis and $\tr(y_1y_2y_3^3)$ is indecomposable. Therefore the Hilbert ideal is generated
by $\{x_1,x_2,x_3,n_1,n_2+u_{13}+u_{12},\norm{y_3}\}$ and the Noether number is $5$.
\end{sremark}

The ring of invariants for the regular representation was computed in \cite[Corollary~1.8]{adem-milgram} and
\cite[Lemma~5.2]{El-Fl}. We include an alternate calculation here for completeness. Choose a basis $\{x,y_1,y_2,z\}$
for $V_{reg}^*$ so that $\Delta_i(z)=y_i$ and $\tr(z)=x$. Define $u:=y_1y_2+xz$ and
$h:=(u^2+\norm{y_1}\norm{y_2})/x=y_1^2y_2+y_2^2y_1+x(z^2+y_1y_2)$.

\begin{stheorem} $\field[V_{reg}]^G$ is the complete intersection generated by
$$\mathcal C=\{x,u,\norm{y_1},\norm{y_2},h,\norm{z}\}$$ subject to the relations
$$ u^2=\norm{y_1}\norm{y_2}+xh$$
and
$$h^2=\norm{y_1}^2\norm{y_2}+\norm{y_1}\norm{y_2}^2+x\left(h\norm{y1}+uh+h\norm{y_2}+x\norm{z}\right).$$
\end{stheorem}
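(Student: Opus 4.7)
The plan is to verify that $\mathcal C$ is a SAGBI basis for $\field[V_{reg}]^G$ whose non-trivial subductions are generated by the two displayed identities, and then to conclude the complete intersection structure from a Hilbert series match. The main obstacle is the enumeration of non-trivial \tat\ pairs and confirming the two primitive ones subduct via precisely the two stated relations; once that is in hand, the dimension count and Hilbert series closure are straightforward.

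First, invariance of $u=y_1y_2+xz$ is directly checked from the group action, and the identity $h=(u^2+\norm{y_1}\norm{y_2})/x$ shows both that $h\in\field[V_{reg}]$ (in characteristic~$2$ the numerator lies in the ideal generated by $x$) and that $h$ is invariant, since $x$ is a non-zero-divisor with invariant numerator. This makes the first relation automatic. The second relation is then a direct expansion in characteristic~$2$; the role of $\norm{z}$ on the right is precisely to absorb the $x^2z^4$ term that remains after the lower-order terms have been grouped together.

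Next, with grevlex extended so that $z$ is the largest variable, the lead monomials of $\mathcal C$ are
\[
x,\quad y_1y_2,\quad y_1^2,\quad y_2^2,\quad y_1y_2^2,\quad z^4.
\]
Because $x$ and $z^4$ are each coprime to every other lead monomial, the semigroup of products of lead monomials is controlled by a finite enumeration in $\field[y_1,y_2]$. The primitive non-trivial \tat\ pairs turn out to be $(u^2,\norm{y_1}\norm{y_2})$, with common lead $y_1^2y_2^2$, and $(h^2,\norm{y_1}\norm{y_2}^2)$, with common lead $y_1^2y_2^4$. The first subducts to zero via the first relation (the difference equals $xh\in\field[\mathcal C]$), and the second via the second relation. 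All other non-trivial \tat\ pairs are semigroup multiples of these two and reduce analogously. By \cite[Theorem~1.1]{CSW}, $\mathcal C$ is a SAGBI basis for $\field[V_{reg}]^G$.

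Finally, let $P:=\field[X,U,N_1,N_2,H,Z]$ be the graded polynomial algebra with degrees $1,2,2,2,3,4$ respectively, and let $\widetilde R_1,\widetilde R_2\in P$ be the abstract versions of the two relations. Mapping these variables to the corresponding elements of $\mathcal C$ yields a graded surjection $\phi:P/(\widetilde R_1,\widetilde R_2)\twoheadrightarrow\field[V_{reg}]^G$. Since $\widetilde R_1$ is linear in $H$ with leading coefficient $X$, it is irreducible in $P$, so $P/(\widetilde R_1)$ is a five-dimensional integral domain; the image of $\widetilde R_2$ there is nonzero because its $Z$-coefficient $X^2$ is nonzero modulo $\widetilde R_1$, hence it is a non-zero-divisor. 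Thus $(\widetilde R_1,\widetilde R_2)$ is a regular sequence, $P/(\widetilde R_1,\widetilde R_2)$ is a four-dimensional complete intersection, and its Hilbert series
\[
\frac{(1-t^4)(1-t^6)}{(1-t)(1-t^2)^3(1-t^3)(1-t^4)}=\frac{1+t^3}{(1-t)(1-t^2)^3}
\]
agrees with that of $\field[V_{reg}]^G$, computed from the SAGBI basis above. Therefore $\phi$ is an isomorphism, establishing both the generating set and the complete intersection presentation.
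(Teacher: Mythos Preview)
Your argument has a genuine gap at the step where you conclude that $\mathcal C$ is a SAGBI basis \emph{for $\field[V_{reg}]^G$}. Subducting all non-trivial \tat s only shows that $\mathcal C$ is a SAGBI basis for the subalgebra it generates; it does not by itself show this subalgebra is all of $\field[V_{reg}]^G$. Your citation of \cite[Theorem~1.1]{CSW} is therefore premature: that result requires, in addition to the subduction check, that $\mathcal C$ contain an hsop and that $\mathcal C$ already generate the localisation $\field[V_{reg}]^G[x^{-1}]$. You verify neither hypothesis. The subsequent Hilbert-series comparison does not repair this, because the phrase ``computed from the SAGBI basis above'' is circular: the SAGBI basis yields the Hilbert series of the subalgebra $\field[\mathcal C]$, not of $\field[V_{reg}]^G$, and the asserted surjection $\phi:P/(\widetilde R_1,\widetilde R_2)\twoheadrightarrow\field[V_{reg}]^G$ is exactly what is in question.

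The paper closes this gap with a short localisation argument: since $\field[x,y_1,y_2]^G=\field[x,\norm{y_1},\norm{y_2}]$ by \cite[Theorem~3.75]{DK}, and $u$ is linear in $z$ with coefficient $x$, \cite[Theorem~2.4]{CampChuai} gives $\field[V_{reg}]^G[x^{-1}]=\field[x,\norm{y_1},\norm{y_2},u][x^{-1}]$. Together with the block hsop $\{x,\norm{y_1},\norm{y_2},\norm{z}\}\subset\mathcal C$, this legitimises the appeal to \cite[Theorem~1.1]{CSW}. If you prefer to keep your Hilbert-series route, you must compute the Hilbert series of $\field[V_{reg}]^G$ independently---for instance via the hsop $\{x,\norm{y_1},\norm{y_2},\norm{z}\}$ of degrees $1,2,2,4$ and the known Cohen--Macaulayness of this particular invariant ring (so that $\field[V_{reg}]^G$ is free of rank $16/|G|=4$ over the hsop subalgebra)---and then compare. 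Either way, some external input establishing that $\mathcal C$ actually reaches all invariants is required.
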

\begin{proof} It follows from \cite[Theorem~3.75]{DK}, that
$\field[x,y_1,y_2]^G$ is the polynomial ring generated by $x$, $\norm{y_1}$ and $\norm{y_2}$.
Since $u$ is degree $1$ in $z$ with coefficient $x$, using \cite[Theorem~2.4]{CampChuai}, we have
$\field[V_{reg}]^G[x^{-1}]=\field[x,\norm{y_1},\norm{y_2},u][x^{-1}]$. Using the graded reverse lexicographic order
with $z>y_1>y_2>x$, there are two non-trivial \tat s among the elements of $\mathcal C$.
These two \tat s subduct to zero using the given relations. Therefore $\mathcal C$ is a SAGBI basis for the subalgebra it generates.
Since $\{x,\norm{y_1},\norm{y_2},\norm{z}\}$
is a block hsop, applying \cite[Theorem~1.1]{CSW} shows that $\mathcal C$ is a SAGBI basis for $\field[V_{reg}]^G$.
Since all relations come from subducting \tat s, the ring of invariants is the given complete intersection.
\end{proof}

It follows from the above theorem that  for $V_{reg}$ the Noether number is $4$ and the Hilbert ideal is generated
by $\{x,u,\norm{y_1},\norm{y_2},\norm{z}\}$. We note that $V_{reg}$ is the only indecomposable modular representation
of $G$ whose
Hilbert ideal is not generated by a block hsop.

\medskip

\subsection{\rm The Proof of Theorem~\ref{even-indec}}\label{even-indec-proof}
\hfil

\medskip
Suppose, by way of contradiction, that $\tr(y_1\cdots y_{\ell}y_{\ell+1}^3\cdots y_m^3)$ is
decomposable. Working modulo the $G$-stable ideal
$(x_1,\ldots,x_{m-1})S$, it easy to see that
$$\lt(\tr(y_1\cdots  y_{\ell}y_{\ell+1}^3\cdots y_m^3))=(\lambda^2+\lambda) y_1\cdots y_{\ell}y_{\ell+1}^3\cdots y_{m-1}^3x_m^3.$$
Thus there are two monomials of positive degree, say $M_1$ and $M_2$, such that
$M_1M_2=y_1\cdots y_{\ell}y_{\ell+1}^3\cdots y_{m-1}^3 x_m^3$,
and both $M_1$ and $M_2$ appear in $G$-invariant polynomials.
We use the following results to rule out possible factorisations.

\begin{slemma} \label{shift}
Suppose  $f\in\ring^G$, $M'$ is a monomial in $y_1,\ldots,y_m$, and $i>1$.
If the degree of $y_i$ in $M'$ is even then $M'y_ix_m$
does not appear in $f$.
Further suppose $j<m$.
Then the degree of $y_i$ in $M'$ is even and $M'y_ix_j$ appears in $f$ if and only if
 the degree of $y_{j+1}$ in $M'$ is even and $M'y_{j+1}x_{i-1}$ appears in $f$.
\end{slemma}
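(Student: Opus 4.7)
The plan is to compute, for both parts, the coefficient of the ``bridge'' monomial $M'' := M' x_{i-1} x_j$ (taking $j = m$ in the first part) in $\Delta_2(f)$. Since $f \in \ring^{\sigma_2}$ this coefficient vanishes, and the work is to show that, after all cancellations, the vanishing reduces to $\mathrm{coeff}_f(M' y_i x_m) = 0$ in the first part, and to $\mathrm{coeff}_f(M' y_i x_j) + \mathrm{coeff}_f(M' y_{j+1} x_{i-1}) = 0$ in the second.

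First I would enumerate the monomials $N \in \ring$ for which $M''$ appears in $\sigma_2(N)$. Writing $N = \prod_k y_k^{B_k} \cdot x^C$ and expanding $\sigma_2(N) = \prod_k (y_k + \Delta_2(y_k))^{B_k} \cdot x^C$ in characteristic $2$, this becomes the combinatorial condition $k_\ell := B_\ell - c_\ell \geq 0$, $\sum_\ell k_\ell + |C| = 2$, together with $\prod_\ell \Delta_2(y_\ell)^{k_\ell}\cdot x^C$ containing the monomial $x_{i-1} x_j$. Using that $x_m$ enters $\Delta_2(y_\ell)$ only for $\ell = m$, that $x_{i-1}$ enters only for $\ell \in \{i-1, i\}$, and that $x_j$ (when $j < m$) enters only for $\ell \in \{j, j+1\}$, the candidate $N$'s split into three classes: $N = M''$ itself, which contributes $0$ to $\sigma_2(N) - N$; the ``single-$x$'' candidates $M' y_{i-1} x_j$, $M' y_i x_j$, $M' y_j x_{i-1}$, plus the extra $M' y_{j+1} x_{i-1}$ when $j < m$; and the ``two-$y$'' candidates $M' y_{\ell_1} y_{\ell_2}$.

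Next I would dispose of the two classes of nuisance candidates. Each two-$y$ candidate contributes with binomial factor $\binom{c_{\ell_1}+1}{1}\binom{c_{\ell_2}+1}{1}$ which is nonzero in characteristic $2$ only when $c_{\ell_1}$ and $c_{\ell_2}$ are both even; but then the pure-$y$ monomial $N$ has odd $y$-exponent at $\ell_1$ and $\ell_2$, and Lemma~\ref{ilk} applied to $f \in \ring^{\sigma_1}$ forces $\mathrm{coeff}_f(N) = 0$. For the single-$x$ pair $M' y_{i-1} x_j$ and $M' y_j x_{i-1}$, each contributes $\lambda\cdot\mathrm{coeff}_f(\cdot)$ to the coefficient of $M''$; Lemma~\ref{permute} applied to $\sigma_1$ forces these $f$-coefficients to coincide (or both vanish, if a parity check rules one out), so the two $\lambda$-contributions cancel in characteristic $2$.

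What remains is the equation $\mathrm{coeff}_f(M' y_i x_j) + \mathrm{coeff}_f(M' y_{j+1} x_{i-1}) = 0$, each term carrying an implicit parity factor that is $1$ when the corresponding $c_\ell$ is even and $0$ otherwise, and the second term being absent when $j = m$. The first part then reads $\mathrm{coeff}_f(M' y_i x_m) = 0$, contradicting the hypothesis that this monomial appears. For the second part, if $\deg_{y_i}(M')$ is even and $M' y_i x_j$ appears, the first term is nonzero and so must be the second, which both forces $\deg_{y_{j+1}}(M')$ to be even (else its binomial factor kills the term) and forces $M' y_{j+1} x_{i-1}$ to appear with matching coefficient; the reverse direction is symmetric. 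The main obstacle I expect is the careful bookkeeping of the two-$y$ cancellations across edge cases like $i = 2$ (where $\Delta_2(y_1) = \lambda x_1$ lacks an ``$x_0$'' term) and $i - 1 = j$ (where the two sides of the ``iff'' coincide and Lemma~\ref{permute} must be applied with care), but in each case the binomial-parity versus Lemma~\ref{ilk} conflict still delivers the cancellation.
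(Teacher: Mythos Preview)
Your proof is correct and follows essentially the same approach as the paper. Both arguments compute the coefficient of the bridge monomial $M'x_{i-1}x_j$ in $\Delta_2(f)=0$: the paper simply lists the eight candidate monomials (your items (1)--(8) correspond to your single-$x$ and two-$y$ classes), kills the two-$y$ candidates via Lemma~\ref{ilk}, and cancels the $\lambda$-weighted pair $M'y_{i-1}x_j$, $M'y_jx_{i-1}$ via Lemma~\ref{permute}, leaving exactly the relation between $M'y_ix_j$ and $M'y_{j+1}x_{i-1}$ that you obtain.
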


\begin{proof}
We list the monomials in $S$ that produce $M'x_{i-1}x_j$ after
applying $\del_2$:
\begin{enumerate}
\item $M'y_ix_j$ if the degree of $y_i$ in $M'$ is even;
\item $M'x_{i-1}y_{j+1}$ if $j<m$ and the degree of $y_{j+1}$ in
$M'$ is even;
\item $M'x_{i-1}y_{j}$ if the degree of $y_{j}$ in
$M'$ is even and $\lambda \neq 0 $;
 \item $M'y_{i-1}x_{j}$
 if the degree of $y_{i-1}$ in $M'$ is even and
$\lambda \neq 0$;
 \item $M'y_{i-1}y_{j}$ if the degree of
$y_{i-1}$ and $y_j$ in $M'$ is even and $\lambda\neq 0$;
\item $M'y_{i-1}y_{j+1}$ if  $j<m$ and the degree of $y_{i-1}$ and
$y_{j+1}$ in $M'$ is even and $\lambda\neq 0$;
\item $M'y_{i}y_{j+1}$ if $j<m$ and the degree of $y_{i}$ and $y_{j+1}$
in $M'$ is even;
 \item $M'y_{i}y_{j}$ if $i\neq j$ and  the degree
of $y_{i}$ and $y_{j}$ in $M'$ is even and $\lambda\neq 0$.
\end{enumerate}
Note that the monomials in (5)--(8) do not appear in $f$ by Lemma~\ref{ilk}
because the degree of either $y_i$ or $y_{i-1}$
is odd. On the other hand, by Lemma~\ref{permute} the
monomials in (3) and (4) appear in $f$ with the
same coefficient (which is possibly zero). Call this coefficient
$\alpha$.  Then the coefficient of $M'x_{i-1}x_j$ in
$\del_2(\alpha M'x_{i-1}y_{j}+\alpha M'y_{i-1}x_{j})$
 is $2\lambda\alpha=0$. It follows that  the monomial in (1)
 appears in $f$ if and only if the monomial in (2) appears in $f$.
 \end{proof}

\begin{sproposition}
\label{double} Let $M=\prod_{i\in I}y_i^2$  for some non-empty
subset $I\subseteq \{1, \dots , m\}$ and assume that $M$ appears
in a polynomial $f\in S^G$. Let $j$ denote the maximum integer in
$I$. Then  $2j\le m+1$.
Furthermore, if $\lambda \in \field \setminus \field_2 $ then $2j\le m$.
\end{sproposition}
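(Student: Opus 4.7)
Write $M'=M/y_j^2=\prod_{i\in I\setminus\{j\}}y_i^2$; since every variable in $M'$ appears squared, all parity hypotheses of Lemmas \ref{shift} and \ref{permute} are automatically satisfied. Lemma \ref{lift} gives that $M'y_jx_j$ appears in $f$ with the same coefficient as $M$, which I normalise to $1$. Starting from $M'y_jx_j$ and alternating Lemma \ref{shift} (second statement), which sends $M'y_ax_b$ to $M'y_{b+1}x_{a-1}$, with Lemma \ref{permute}, which swaps $a$ and $b$, produces $M'y_ax_b$ in $f$ with coefficient $1$ for every pair $(a,b)$ with $a+b=2j$ and $1\leq a,b\leq m$.

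For the first bound, suppose $2j\geq m+2$. If $j=m$ then $M'y_mx_m$ already violates Lemma \ref{shift} (first statement), since $y_m$ has even degree $0$ in $M'$ and $m\geq 2$. If $j<m$ the iteration reaches $M'y_mx_{2j-m}$, and one more permute step produces $M'y_{2j-m}x_m$ with $2j-m\geq 2>1$, again contradicting Lemma \ref{shift} (first statement). Hence $2j\leq m+1$.

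For the refined bound when $\lambda\notin\field_2$, suppose $2j=m+1$, so $m$ is odd. The iteration now produces the full set of $M'y_ax_b$ with $a+b=m+1$, all with coefficient $1$. I handle the case $|I|=1$ (so $M'=1$) in detail: then $f$ is a homogeneous $\sigma_1$-invariant polynomial of degree $2$, so by Proposition \ref{sigma-gen} it decomposes uniquely as $\sum_i\beta_in_i+\sum_{a<b}\alpha_{ab}u_{ab}+\sum c_{pq}x_px_q$, where $u_{ab}=x_ay_b+x_by_a$; the forcing gives $\beta_j=1$ and $\alpha_{a,m+1-a}=1$ for $a<j$. A direct calculation yields $\Delta_2(n_i)=(\lambda^2+\lambda)x_i^2+x_{i-1}^2+x_{i-1}x_i$ and $\Delta_2(u_{ab})=x_{a-1}x_b+x_ax_{b-1}$, and the vanishing of $\Delta_2(f)$ gives a linear system in the $\beta$'s and $\alpha$'s. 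The $x_m^2$ coefficient reads $(\lambda^2+\lambda)\beta_m=0$, which forces $\beta_m=0$ because $\lambda^2+\lambda\neq 0$. A descending cascade through the coefficients of $x_{r-1}x_s$ with $s>j$, each of the form $\alpha_{r,s}=\alpha_{r-1,s+1}$, then forces $\alpha_{r,s}=0$ whenever $r+s\neq m+1$, and the $x_{s-1}x_s$ coefficients force $\beta_s=0$ for every $s>j$. The final $x_j^2$ coefficient reduces to $(\lambda^2+\lambda)\beta_j+\beta_{j+1}+\alpha_{j,j+1}=\lambda^2+\lambda$, which must vanish, contradicting $\lambda\notin\field_2$.

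For general $|I|>1$, the plan is to run the same analysis on the coefficient of $M'$ in $f$, viewed as a polynomial in the variables $y_i$ with $i\in I\setminus\{j\}$. This projection commutes with $\Delta_2$ on monomials of the form $M'\widetilde N$ with $\widetilde N\in\field[y_j,y_k\,(k\notin I\setminus\{j\}),x_1,\ldots,x_m]$, since $\sigma_2$ fixes the $x_i$. The \textbf{main obstacle} is controlling contributions from monomials in $f$ with some $y_i^{a_i}$, $a_i>2$, for $i\in I\setminus\{j\}$: these contribute zero to the $M'$-coefficient of $f$ itself, but under $\sigma_2$ they can inject $x$-terms into the $M'$-coefficient of $\Delta_2(f)$ through the binomial factors $\binom{a_i}{2}(\lambda x_i+x_{i-1})^{a_i-2}$. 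The task is to verify, using parity arguments of the sort developed in Lemmas \ref{ilk}, \ref{permute} and \ref{shift}, that these spurious contributions miss the key monomials driving the cascade, so the same obstruction $\lambda^2+\lambda=0$ is reached.
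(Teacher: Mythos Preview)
Your argument for the bound $2j\le m+1$ is essentially the paper's: start from $M'y_jx_j$ via Lemma~\ref{lift} and alternate Lemmas~\ref{shift} and~\ref{permute} along the antidiagonal $a+b=2j$ until you hit the boundary. (Minor point: rather than asserting up front that \emph{every} pair $(a,b)$ is reached, it is cleaner to phrase the iteration as the paper does---run it and let the first statement of Lemma~\ref{shift} supply the contradiction or the bound.)

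For the refined bound $2j\le m$ when $\lambda\notin\field_2$, your argument is incomplete, and the fix is much simpler than the route you sketch. The paper's key move is to assume at the outset that $M$ is chosen so that $j=\max I$ is \emph{maximal} among all monomials of the form $\prod_{i\in I'}y_i^2$ appearing in $f$; this is harmless since proving the bound for the maximal $j$ proves it for all. Under this assumption, $M'y_{j+1}^2$ cannot appear in $f$. Now observe that $M'x_j^2$ appears in $\Delta_2(M+M'x_jy_j)$ with coefficient $\lambda^2+\lambda\neq 0$. The only other monomials that can produce $M'x_j^2$ under $\Delta_2$ are $M'y_jy_{j+1}$ (excluded by Lemma~\ref{ilk}), $M'y_{j+1}^2$ (excluded by maximality), and $M'x_jy_{j+1}$. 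Hence $M'x_jy_{j+1}$ appears in $f$, and one more round of alternating Lemma~\ref{permute} and Lemma~\ref{shift} yields $M'x_1y_{2j}$ in $f$, forcing $2j\le m$.

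The ``main obstacle'' you identify for $|I|>1$---contributions from higher powers $y_i^{a_i}$ with $a_i>2$---is precisely the phenomenon that the maximality trick neutralises in one stroke: the dangerous monomial is $M'y_{j+1}^2$, and maximality kills it directly. Your cascade computation for $|I|=1$ is correct but unnecessary once you have this device; no projection or coefficient-chasing is needed.
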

\begin{proof}
If $j=1$, then $2j\leq m+1$ implies $m\geq 1$ and $2j\leq m$ gives $m>1$.
For $m=1$, we have $S^G=\field [x_1, \norm{y}]$ and, if $\lambda\in\field\setminus\field_2$, then
$\lt(\norm{y_1})=y_1^4$. Thus the assertion holds for $j=1$.

Suppose $j>1$ and assume that $M$ is maximal among all such monomials that appear in $f$.
Let $M'$ denote the monomial $\prod_{i\in I\setminus \{j\}}y_i^2$.
Using Lemma~\ref{lift} (with $\sigma=\sigma_1)$, we see that $M'x_jy_j$  appears in $f$.
Since $j>1$, by Lemma~\ref{shift}, $j<m$ and $M'x_{j-1}y_{j+1}$ appears in $f$.
Applying Lemma~\ref{permute} shows that
$M'x_{j+1}y_{j-1}$ appears in $f$. If $j-1>1$, then, again using Lemma~\ref{shift}, we have
$j+1<m$ and $M'x_{j-2}y_{j+2}$ appears in $f$.
In this case, by applying Lemma~\ref{permute}, we see that
$M'x_{j+2}y_{j-2}$ appears in $f$.
Continue alternating  Lemma~\ref{shift} and Lemma~\ref{permute} until $j-k=1$. This shows that
$M'y_{j-k}x_{j+k}=M'y_1x_{2j-1}$ appears in $f$. Thus $2j-1\leq m$, as required.

Suppose that $\lambda \in \field \setminus \field_2 $.
Note that $M'x_j^2$ appears in $\del_2( M+M'x_jy_j)$ with
coefficient $\lambda+\lambda^2\neq 0$. Since $\del_2 (f)=0$, there
must be other monomials in $f$ that produce $M'x_j^2$ after
applying $\del_2$. The monomials $M'y_jy_{j+1}$, $M'x_jy_{j+1}$
and $M'y_{j+1}^2$ are the only such monomials.
However, $M'y_jy_{j+1}$ does not appear in $f$ by Lemma
\ref{ilk} and the  maximality of $j$ implies that $M'y_{j+1}^2$ doe not
appear in $f$ either. It follows that $M'x_jy_{j+1}$ appears in
$f$. Applying Lemma~\ref{permute} and Lemma~\ref{shift}
repeatedly we see that $M'x_1y_{2j}$ appears in $f$.
Hence $2j\le m$.
\end{proof}

Write
$M_1=y_1^{a_1}\cdots y_{m-1}^{a_{m-1}}x_m^{a_m}$ and
$M_2=y_1^{b_1}\cdots y_{m-1}^{b_{m-1}}x_m^{b_m},$ where $a_i$ and $b_i$
are non-negative integers. 
We have $a_i+b_i=1$ for $i\le \ell$ and $a_i+b_i=3$ for $i>\ell$.

Suppose $a_m=0$. Then, using Lemma~\ref{ilk} (with $\sigma=\sigma_1$), $a_i$ is even for all $i$.
Thus $a_i=0$ for $i\leq\ell$. Hence Proposition~\ref{double} applies, forcing $a_i=0$ for
$i>\ell\geq m/2$. Therefore, if $a_m=0$, we have $M_1=1$ and the factorisation is trivial.
Hence $a_m>0$. Similarly, $b_m>0$. Without loss of generality, we assume $a_m=1$ and $b_m=2$.

\begin{slemma}\label{even-a}
 If $m\ge 3$, then   $a_{m-1}$ is even. If $m\ge 4$, then   $a_{m-2}$ is even.
 \end{slemma}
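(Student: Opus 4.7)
The plan is to derive both parts directly from the first statement of Lemma~\ref{shift}, using the hypothesis (inherited from the decomposability assumption) that $M_1 = y_1^{a_1}\cdots y_{m-1}^{a_{m-1}}x_m$ appears in some $G$-invariant $f$. The key observation is that if some $a_k$ (for $k\in\{m-1,m-2\}$) were odd, we could peel off a single $y_k$ from $M_1$ and land exactly in the forbidden shape ``$M'y_k x_m$ with $y_k$ of even degree in $M'$'' appearing in $f$.

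First I would establish the evenness of $a_{m-1}$ for $m\geq 3$. Suppose for contradiction that $a_{m-1}$ is odd. Since $a_m = 1$, I can factor $M_1 = M' y_{m-1} x_m$ with $M' := y_1^{a_1}\cdots y_{m-2}^{a_{m-2}} y_{m-1}^{a_{m-1}-1}$, a monomial in the $y$'s in which $y_{m-1}$ occurs with the even degree $a_{m-1}-1$. Because $m\geq 3$ gives $m-1>1$, the first assertion of Lemma~\ref{shift} (with $i=m-1$) applies and forbids $M_1$ from appearing in any element of $\ring^G$, contradicting the choice of $f$.

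Next, assuming $m\geq 4$ and the evenness of $a_{m-1}$ just proved, I would argue identically for $a_{m-2}$. Suppose $a_{m-2}$ is odd and write $M_1 = M'' y_{m-2} x_m$ with $M'' := y_1^{a_1}\cdots y_{m-3}^{a_{m-3}} y_{m-2}^{a_{m-2}-1} y_{m-1}^{a_{m-1}}$. The degree of $y_{m-2}$ in $M''$ is the even number $a_{m-2}-1$, and the constraint $m-2>1$ is exactly what $m\geq 4$ provides, so Lemma~\ref{shift} again delivers a contradiction.

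There is no real obstacle beyond bookkeeping: the only reason for the restrictions $m\geq 3$ and $m\geq 4$ is the hypothesis $i>1$ in Lemma~\ref{shift}, and the argument for each claim is a one-line application once the monomial is rewritten in the form $M' y_i x_m$.
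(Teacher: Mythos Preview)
Your proof is correct and is exactly the argument the paper has in mind: the paper's own proof is the single line ``Both statements follow from Lemma~\ref{shift},'' and you have spelled out precisely how the first assertion of that lemma applies with $i=m-1$ and $i=m-2$. One minor remark: you need not invoke the evenness of $a_{m-1}$ in the second step, since Lemma~\ref{shift} only constrains the degree of $y_i$ in $M'$, not the degrees of the other variables.
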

\begin{proof}
Both statements follow from Lemma~\ref{shift}.

\end{proof}

\begin{slemma}\label{parity}
If $m\ge 3$, then $b_{m-1}$ and $b_{m-2}$ are not both odd.
\end{slemma}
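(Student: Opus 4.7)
The plan is to argue by contradiction. Suppose that both $b_{m-1}$ and $b_{m-2}$ are odd. Since $b_m=2$, we may write
\[
M_2 = M''\,y_{m-2}\,y_{m-1}\,x_m^2,
\]
where $M''$ is a monomial in $\{y_1,\ldots,y_{m-1}\}$ in which $y_{m-2}$ and $y_{m-1}$ each appear to an even degree.

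My plan is to exploit the $\sigma_2$-invariance of $f$ by computing the coefficient of a carefully chosen target monomial $Q$ in $\Delta_2(f)$, which must vanish. A natural choice is
\[
Q := M''\,y_{m-2}\,x_{m-2}\,x_m^2,
\]
obtained from $M_2$ by substituting a single $y_{m-1}$ using the $x_{m-2}$-summand of $\Delta_2(y_{m-1})=\lambda x_{m-1}+x_{m-2}$. The contribution of $M_2$ itself to $[Q]\Delta_2(f)$ is then the coefficient of $M_2$ in $f$. The remaining contributors are the monomials $N$ obtained from $Q$ by ``promoting'' one or more $x$-variables back to $y$-variables via $\Delta_2(y_k)=\lambda x_k+x_{k-1}$; up to promotions occurring inside $M''$, the relevant shapes are $M''y_{m-2}^2\,x_m^2$ (promote $x_{m-2}$ to $y_{m-2}$), $M''y_{m-2}\,y_{m-1}\,y_m\,x_m$ and $M''y_{m-2}\,y_{m-1}\,y_m^2$ (promote $x_m$'s to $y_m$'s), plus analogues with $x_{m-2}$ produced by some $y_{m-1}$ already in $M''$.

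For each such alternative I would eliminate it as follows. Monomials of the form $(\text{pure $y$})\cdot y_m x_m$ fail Lemma~\ref{ilk}(ii) because the $y_{m-2}$-degree coming from $M''\cdot y_{m-2}$ is odd, and pure $y$-monomials like $M''y_{m-2}\,y_{m-1}\,y_m^2$ fail Lemma~\ref{ilk}(i) for the same reason. The candidate $M''y_{m-2}^2\,x_m^2$ contributes to $Q$ via two equivalent choices of which $y_{m-2}$ to substitute, giving binomial coefficient $\binom{2}{1}=0$ in characteristic $2$. For candidates produced by substituting an extra $y_{m-1}$ inside $M''$, repeated application of Lemma~\ref{permute} and Lemma~\ref{shift} pairs the resulting contributions with ones that are already forbidden. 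After this cleanup the only surviving term of $[Q]\Delta_2(f)$ is the contribution of $M_2$ itself, which must therefore vanish, contradicting the assumption that $M_2$ appears in $f$.

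The main obstacle will be the bookkeeping: enumerating the full list of monomials that map to $Q$ under $\Delta_2$ and systematically verifying that each non-$M_2$ candidate is either forbidden by one of the parity lemmas or carries a coefficient that vanishes in characteristic $2$. A secondary subtlety is handling promotions that reach into $M''$, which will require iterated application of Lemma~\ref{permute} so that the resulting monomials can then be paired off or ruled out by Lemma~\ref{ilk}.
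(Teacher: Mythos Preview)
Your overall strategy—pick a target monomial $Q$ in $\Delta_2(f_2)$, list all monomials $N$ with $Q$ in $\Delta_2(N)$, and eliminate them—is the same idea the paper uses, except that the paper starts with $\Delta_1$ rather than $\Delta_2$. The gap is in your enumeration. From $Q=M''y_{m-2}x_{m-2}x_m^2$ you may promote the $x_{m-2}$ (to $y_{m-2}$ or $y_{m-1}$) and independently promote zero, one, or two of the $x_m$'s to $y_m$. You have recorded the mixed promotions $M''y_{m-2}y_{m-1}y_mx_m$ and $M''y_{m-2}y_{m-1}y_m^2$ (your $N_3,N_4$), but you have omitted the cases where the native $x_{m-2}$ is kept and only $x_m$'s are promoted, namely
\[
N_1=M''\,y_{m-2}\,x_{m-2}\,y_m\,x_m,\qquad
N_2=M''\,y_{m-2}\,x_{m-2}\,y_m^2,
\]
each of which contributes to $[Q]\Delta_2(f_2)$ with nonzero coefficient $\lambda$ and $\lambda^2$ respectively. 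Neither is of the form ``(pure $y$)$\cdot y_mx_m$'' nor a pure $y$-monomial, so Lemma~\ref{ilk} does not dispose of them; your binomial-coefficient remark applies to $M''y_{m-2}^2x_m^2$, not to these. Ruling out $N_1$ and $N_2$ requires exactly the kind of secondary $\Delta_1$/$\Delta_2$ analysis the paper performs for its own residual cases $Mx_{m-2}y_{m-1}y_m^2$ and $Mx_{m-2}y_{m-1}x_my_m$ (using Lemma~\ref{lift} and Proposition~\ref{double}), and this is where the real work of the lemma lies.

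A practical remark: the paper begins with $\Delta_1$ because $\Delta_1(y_j)=x_j$ is a single term, so the list of sources for a given target under $\Delta_1$ is shorter and cleaner than under $\Delta_2(y_j)=\lambda x_j+x_{j-1}$. Starting with $\Delta_2$ is not wrong, but it generates more candidates and makes the bookkeeping you flagged as ``the main obstacle'' genuinely harder; in particular, the two missed monomials above are precisely the extra candidates introduced by allowing the $x_{m-2}$ to remain native.
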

\begin{proof}
Assume on the contrary that both $b_{m-1}$ and $b_{m-2}$ are odd
and that $M_2$ appears in $f_2\in S^G$.
Define $M=y_1^{b_1}\cdots
 y_{m-3}^{b_{m-3}}y_{m-2}^{b_{m-2}-1}y_{m-1}^{b_{m-1}-1}$ so that
 $M_2=My_{m-2}y_{m-1}x_m^2$. Then $Mx_{m-2}y_{m-1}x_m^2$ appears
 in $\del_1 (My_{m-2}y_{m-1}x_m^2)$. Since $\del_1(f_2)=0$, there
 must be other monomials in $f_2$ that produce
 $Mx_{m-2}y_{m-1}x_m^2$ after applying $\del_1$. The only monomials
with this property are $My_{m-2}y_{m-1}y_m^2$,
 $My_{m-2}y_{m-1}x_my_m$, $Mx_{m-2}y_{m-1}y_m^2$ and
 $Mx_{m-2}y_{m-1}x_my_m$. However $My_{m-2}y_{m-1}y_m^2$ does not
 appear in $f_2$ by Lemma \ref{ilk} because the degree of
 $y_{m-1}$ in this monomial is odd. Also, $My_{m-2}y_{m-1}x_my_m$
 does not appear in $f_2$ by Lemma \ref{shift}. If
 $Mx_{m-2}y_{m-1}y_m^2$ appears in $f_2$, then, since the degree of
 $y_{m-2}$ in this monomial is odd,
 $Mx_{m-2}^2y_m^2$ appears in $\del_2(Mx_{m-2}y_{m-1}y_m^2)$. So
 there must be another monomial in $f_2$ that produces
 $Mx_{m-2}^2y_m^2$ after applying $\del_2$. The only monomials in
 $S$ with this property are $My_{m-1}^2y_m^2$ if $b_{m-1}=1$, $My_{m-2}^2y_m^2$ if $b_{m-2}=1$,
  $My_{m-2}y_{m-1}y_m^2$ and
 $Mx_{m-2}y_{m-2}y_m^2$. The first three monomials do not appear in
 $f_2$ by Lemma~\ref{ilk} and Proposition~\ref{double}. On the
 other hand $Mx_{m-2}y_{m-2}y_m^2$ does not appear in $f_2$ if
 $b_{m-2}=3$ by Lemma~\ref{shift}. If $b_{m-2}=1$, then
 $Mx_{m-2}y_{m-2}y_m^2$ appears in $f_2$ if and only if
 $My_{m-2}^2y_m^2$ appears in $f_2$. However the latter monomial does
 not appear in $f_2$ by Lemma~\ref{ilk} and Proposition~\ref{double}.
Therefore  $Mx_{m-2}y_{m-1}y_m^2$ doe not appear in $f_2$.

 We finish the proof by showing that  $Mx_{m-2}y_{m-1}x_my_m$ does
 not appear in $f_2$.
 Note that
$Mx_{m-2}^2x_my_m$ appears in
 $\del_2(Mx_{m-2}y_{m-1}x_my_m)$. The other monomials that
 produce $Mx_{m-2}^2x_my_m$ after applying $\del_2$ are
 $My_{m-1}^2x_my_m$ if $b_{m-1}=1$, $My_{m-2}^2x_my_m$ if
 $b_{m-2}=1$, $My_{m-2}y_{m-1}x_my_m$ and $Mx_{m-2}y_{m-2}x_my_m$.
 The first two  monomials appear in $f_2$ if and only if $My_{m-1}^2y_m^2$
 and $My_{m-2}^2y_m^2$ appear in $f_2$, respectively, by Lemma~\ref{lift}.
However neither of the latter monomials appear in $f_2$
 by Lemma~\ref{ilk} and Proposition~\ref{double}. The third monomial
 does not appear in $f_2$ by
Lemma~ \ref{shift}. Finally, $Mx_{m-2}y_{m-2}x_my_m$ appears in $f_2$ if and only if $My_{m-2}^2x_my_m$
 appears in $f_2$ because these are the only monomials in $S$ that
 produce $Mx_{m-2}^2x_my_m$ after applying $\del_1$. However $My_{m-2}^2x_my_m$ appears in
 $f_2$ if and only if $My_{m-2}^2y_m^2$ appears in $f_2$ by Lemma~\ref{lift} and
 the latter monomial does not appear  in $f_2$ by Proposition~\ref{double}.
\end{proof}

Returning to the proof of Theorem~\ref{even-indec},
 first assume that $m\ge 4$.
Then by Lemma~\ref{even-a}, $a_{m-2}$ and $a_{m-1}$ are both even.
Therefore $b_{m-2}$ and $b_{m-1}$ are both odd, contradicting
Lemma~\ref{parity}.

Suppose $m=3$ and $M_1$ appears in $f_1\in S^G$. By Lemma~\ref{even-a}, $a_2$ is even.
Thus $b_2$ is odd and, by Lemma~\ref{parity}, $b_1$ is even.
Therefore $b_1=0$, $a_1=1$ and $M_1=y_1y_2^{a_2}x_3$.
By Lemma~\ref{permute}, $x_1y_2^{a_2}y_3$
 also appears in $f_1$. Thus $y_2^{a_2+1}x_2$ appears in $f_1$ as
 well by Lemma~\ref{shift}. This is contradicts Lemma~\ref{lift}
if $a_2=2$ and Proposition~\ref{double} if $a_2=0$.

\medskip
\subsection{\rm The Proof of Theorem~\ref{zero-even-indec}}\label{zero-even-indec-proof}
\hfil

\medskip
Suppose, by way of contradiction, that $\tr(y_1\cdots y_{\ell'}y_{\ell'+1}^3\cdots y_m^3)$ is
decomposable. Working modulo the $G$-stable ideal
$(x_1,\ldots,x_{m-2},x_{m-1}^2)S$, a straight-forward calculation gives
$$\lt(\tr(y_1\cdots y_{\ell'}y_{\ell'+1}^3\cdots y_m^3))=y_1\cdots  y_{\ell'}y_{\ell'+1}^3\cdots y_{m-1}^3x_{m-1} x_m^2.$$
Thus there are two monomials of positive degree, say $M_1$ and $M_2$, such that
$M_1M_2=y_1\cdots  y_{\ell'}y_{\ell'+1}^3\cdots y_{m-1}^3x_{m-1} x_m^2$,
and both $M_1$ and $M_2$ appear in $G$-invariant polynomials, say $f_1$ and $f_2$.
Without loss of generality, we may assume
$M_1=y_1^{a_1}\cdots y_{m-1}^{a_{m-1}}x_{m-1}x_m^{a_m}$ and $M_2=y_1^{b_1}\cdots y_{m-1}^{b_{m-1}}x_m^{b_m}$.
It follows from Lemma~\ref{ilk} and Proposition~\ref{double}, that $b_m>0$.

\begin{slemma}
If $m>i>1$, then $b_i$ is even and $a_i$ is odd.
\end{slemma}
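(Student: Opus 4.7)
The plan is to exploit $\sigma_2$-invariance via a specialization argument, using the fact that $M_2$ involves none of $x_1, \ldots, x_{m-1}$. First, I recast $S^{\sigma_2}$ in a convenient form. When $\lambda=0$, $\sigma_2$ fixes $y_1$ and each $x_i$, and sends $y_j \mapsto y_j + x_{j-1}$ for $j \ge 2$, so $\sigma_2$ pairs the variables as $(x_{j-1}, y_j)$ for $j=2,\ldots,m$, with $y_1$ and $x_m$ as extra fixed invariants. Applying Proposition~\ref{sigma-gen} to this pairing, $S^{\sigma_2}$ is generated as an $\field$-algebra by $\{y_1, x_m\}$ together with the ``$n$'' generators $y_j^2 + x_{j-1} y_j$ for $j=2,\ldots,m$ and the transfers $\Delta_2(\beta)$ for $\beta \mid y_2\cdots y_m$ (the variables $x_1,\ldots,x_{m-1}$ are absorbed into the transfer generators since $\Delta_2(y_j)=x_{j-1}$).

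Next, I specialize at $x_1=\cdots=x_{m-1}=0$. This sends $y_j^2 + x_{j-1} y_j \mapsto y_j^2$, while each transfer $\Delta_2(\beta)$ vanishes because every one of its monomials carries at least one factor $x_{j-1}$ with $j-1<m$. Hence the image of $S^{\sigma_2}$ lies inside $\field[y_1, x_m, y_2^2, \ldots, y_m^2]$. Since $M_2 = y_1^{b_1} y_2^{b_2} \cdots y_{m-1}^{b_{m-1}} x_m^{b_m}$ has no factor $x_i$ with $i<m$, the specialization preserves $M_2$; moreover, no other monomial of $f_2$ can map to $M_2$, so $M_2$ appears with non-zero coefficient in the specialized $f_2$.

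Membership of $M_2$ in $\field[y_1, x_m, y_2^2, \ldots, y_m^2]$ forces $b_2, \ldots, b_{m-1}$ to all be even, and combined with $a_i + b_i \in \{1, 3\}$ being odd for $1<i<m$, this gives $a_i$ odd. The only subtlety to watch is in the first step: Proposition~\ref{sigma-gen} is stated for the standard pairing $(x_j, y_j)$, but the same conclusion applies to the pairing $(x_{j-1}, y_j)$, $j=2,\ldots,m$, either by a cosmetic change of variables or by rerunning the argument with base ring $\field[y_1, x_m]$. Beyond this relabeling I do not anticipate any real obstacle.
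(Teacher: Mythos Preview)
Your proof is correct and follows essentially the same approach as the paper: both identify the $\sigma_2$-module structure via the pairing $(x_{j-1},y_j)$ for $2\le j\le m$ (with $y_1$ and $x_m$ as trivial summands) and then deduce that $b_2,\ldots,b_{m-1}$ are even from the structure of $S^{\sigma_2}$. The paper invokes Lemma~\ref{ilk} directly after noting the tensor decomposition $S^{\sigma_2}\cong\field[x_1,\ldots,x_{m-1},y_2,\ldots,y_m]^{\sigma_2}\otimes\field[x_m,y_1]$, whereas you unpack this via Proposition~\ref{sigma-gen} and the specialization $x_1=\cdots=x_{m-1}=0$; these are equivalent packagings of the same argument.
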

\begin{proof}

Note that $V^*_{m,0}$ and $(m-1)V_2\oplus 2V_1$ are isomorphic
  $\sigma_2$-modules, where  the two copies of $V_1$ are generated
  by $x_m$ and $y_1$ and each pair $x_{i-1},y_i$ for $2\le i\le m$ generate a copy of $V_2$.
  Therefore we have $S^{\sigma_2}\cong \field[x_1, \dots , x_{m-1}, y_2, \dots , y_m]^{\sigma_2}\otimes
  \field[x_m,y_1]$. Hence the fact that $b_i$ is even follows from Lemma \ref{ilk} (with $\sigma=\sigma_2$).
Since $b_i$ is even and $a_i+b_i$ is odd, $a_i$ is odd.
\end{proof}

 We have $b_m>0$ and $a_m+b_m=2$. Therefore, there are two cases, $a_m=0$ and $a_m=1$.
First assume that $a_m=0$. If $a_{m-1}=3$, then $M_1$ does not appear in $f_1$ by Lemma~\ref{lift}.
On the other if $a_{m-1}=1$, then by Lemma~\ref{lift}, $y_1^{a_1}\cdots y_{m-1}^{a_{m-1}+1}$ appears in $f_1$,
 contradicting  Lemma \ref{ilk} because $a_{m-2}$ is odd.

 Suppose that $a_{m}=1$. Set $M=y_1^{a_1}\cdots y_{m-1}^{a_{m-1}-1}$ so that
$M_1=My_{m-1}x_{m-1}x_m$. Then
 $Mx_{m-2}x_{m-1}x_m$ appears in $\del_2 (M_1)$. The only other
 monomials in $S$ that produce $Mx_{m-2}x_{m-1}x_m$ after applying
 $\del_2$ are $My_{m-2}y_mx_m$ and $Mx_{m-2}y_mx_m$. However by Lemma \ref{lift}
 $My_{m-2}y_mx_m$ appears in $f_1$ if and only if $My_{m-2}y_m^2$
 does but the latter monomial does not appear in $f_1$ by Lemma
 \ref{ilk} and Proposition \ref{double}. Finally, if
 $Mx_{m-2}y_mx_m$ appears in $f_1$, there must be another monomial
 in $f_1$ that produces $Mx_{m-2}x_m^2$ after applying $\del_1$. Since $a_{m-2}$ is
 odd, $Mx_{m-2}y_m^2$ is the only such monomial. However if
 $a_{m-2}=3$, then $Mx_{m-2}y_m^2$ does not appear in $f_1$. If
 $a_{m-2}=1$, then again by Lemma \ref{lift}, $My_{m-2}y_m^2$ also
 appears in $f_1$, contradicting  Proposition~\ref{double}.

\section{The Easy Odd case}
\label{easy}

In this section we consider the odd dimensional representations
$\Omega^{-m}(\field)$. The action of $G$ on
$S:=\field[\Omega^{-m}(\field)]=\field[x_1,\ldots,x_m,y_1,\ldots,y_{m+1}]$
is given by $\sigma_i(x_j)=x_j$, $\sigma_1(y_j)=y_j+x_j$ and
$\sigma_2(y_j)=y_j+x_{j-1}$, using the convention that $x_0=0$ and
$x_{m+1}=0$. As in section~\ref{even-sec}, define
$n_i:=y_i^2+x_iy_i$ and $u_{ij}=x_iy_j+x_jy_i$. Then
$n_i,u_{ij}\in\ring^{\sigma_1}$. A simple calculation gives
$\Delta_2(n_i)=x_{i-1}^2+x_ix_{i-1}$ and
$\Delta_2(u_{ij})=x_ix_{j-1}+x_{i-1}x_j$. For
$i\in\{1,\ldots,m+1\}$ define
$$N_i:=n_i
+\sum_{j=1}^{i-1}\left(u_{i-j,i+j}+u_{i-j,i+j-1}\right),$$
so that $N_1=n_1$ and $N_2=n_2+u_{12}+u_{13}$.
An explicit calculation, exploiting the fact that $\Delta_2(u_{1j})=x_1x_{j-1}$,
gives $\Delta_2(N_i)=0$. Therefore $N_i\in S^G$.
Define $\mathcal H_{-m}:=\{x_1,\ldots,x_m,N_1,\ldots,N_{m+1}\}$.
Since $\lm(N_i)=y_i^2$, $\mathcal H_{-m}$ is a block hsop with top class $y_1\cdots y_{m+1}$
and the image of the transfer is generated by $\tr(\beta)$ for $\beta$ dividing $y_1\cdots y_{m+1}$.

\begin{theorem}\label{easy-odd-thm}
For $m>3$, $\tr(y_1\cdots y_{m+1})$ is indecomposable.
\end{theorem}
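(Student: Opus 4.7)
Following the template of Theorem~\ref{even-indec}, I first identify $\lt(\tr(y_1\cdots y_{m+1}))$ by passing to a $G$-stable quotient, then assume decomposability and factor the leading monomial, and finally rule out every factorisation by a chain of lemmas from section~\ref{preliminaries} combined with the maximality of the leading monomial.

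Work modulo the $G$-stable ideal $I:=(x_1,\ldots,x_{m-1})S$. In the quotient $\sigma_1$ acts as the identity on $y_1,\ldots,y_{m-1}$ (since $x_j\in I$ for $j\leq m-1$) and on $y_{m+1}$ (since $x_{m+1}=0$), while $\sigma_1(y_m)=y_m+x_m$; similarly $\sigma_2$ acts as the identity on $y_1,\ldots,y_m$ (since $x_{j-1}\in I$ for $j\leq m$), while $\sigma_2(y_{m+1})=y_{m+1}+x_m$. A direct characteristic-$2$ expansion of $\sum_{g\in G}g(y_my_{m+1})$ collapses to $x_m^2$, so
\[
\tr(y_1\cdots y_{m+1})\equiv y_1\cdots y_{m-1}x_m^2\pmod{I}.
\]
Under grevlex with $x_i<y_j$, every degree-$(m+1)$ monomial in $I$ is strictly smaller than $y_1\cdots y_{m-1}x_m^2$, so this is indeed $\lt(\tr(y_1\cdots y_{m+1}))$.

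Assuming $\tr(y_1\cdots y_{m+1})=\sum p_iq_i$ with $p_i,q_i\in S^G$ of positive degree, for some $i$ there are leading monomials $M_1=\lm(p_i)$ and $M_2=\lm(q_i)$ with $M_1M_2=y_1\cdots y_{m-1}x_m^2$. By Lemma~\ref{ilk}(i) applied with $\sigma=\sigma_1$, if $x_m$ does not divide $M_k$ then $M_k$ is a pure-$y$ monomial with every exponent at most one, forcing $M_k=1$; hence $M_1=y_Tx_m$ and $M_2=y_{\{1,\ldots,m-1\}\setminus T}x_m$ for some subset $T$. For any such $T$ with $i_0:=\max T\geq 2$, examining $\Delta_2(y_Tx_m)$ shows the term $y_{T\setminus\{i_0\}}x_{i_0-1}x_m$ arises from $y_{i_0}\mapsto x_{i_0-1}$ and must be cancelled. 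Enumerating the monomials of $S$ whose $\Delta_2$ produces it, and using $\Delta_2(y_{m+1})=x_m$, shows that either $y_Ty_{m+1}$ or $y_{T\setminus\{i_0\}}y_{m+1}x_{i_0-1}$ must appear in the ambient $G$-invariant $f$. The first is grevlex-larger than $y_Tx_m$ (they differ smallest in $x_m$, with exponents $0<1$), contradicting maximality of the leading monomial. When $i_0-1\in T$, Lemma~\ref{lift} applied to the second monomial (valid because $y_{m+1}$ is $\sigma_1$-fixed and may be pulled outside the statement of the lemma) produces $y_{T\setminus\{i_0,i_0-1\}}y_{i_0-1}^2y_{m+1}$, which is likewise grevlex-larger than $y_Tx_m$, again the same contradiction.

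The main obstacle lies in the residual configurations. When $|T|=1$, so $y_Tx_m$ has degree $2$, one checks directly that the degree-$2$ component of $S^G$ is spanned by $\{N_j\}\cup\{x_ix_j\}$, whose leading monomials are $y_j^2$ or $x_ix_j$; hence no degree-$2$ invariant has leading monomial $y_jx_m$ with $j<m$. When $i_0-1\notin T$ in the second branch, one iterates: Lemma~\ref{permute} with $\sigma=\sigma_1$ together with a $\sigma_2$-shift analogue derived from the same $\Delta_2$-cancellation technique (which is valid here because $\lambda=0$ eliminates items (3)--(6),(8) in the proof of Lemma~\ref{shift}, while $\Delta_2(y_{m+1})=x_m$ contributes the extra term $M'x_{i-1}y_{m+1}$) produces a cascade of monomials in $f$ whose $x$-indices migrate downward and whose $y$-supports evolve, until the cascade terminates either in a pure-$y$ monomial with an odd exponent on some $y_j$ with $j\leq m$ (contradicting Lemma~\ref{ilk}(i)) or in a monomial grevlex-larger than the leading monomial. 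Since $m>3$, at least one of $S$ or $S^c$ has enough elements to carry this iteration through in every remaining case.
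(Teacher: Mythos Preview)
Your argument has a genuine gap at the very start of the factorisation step. From a decomposition $\tr(y_1\cdots y_{m+1})=\sum_i p_iq_i$ you assert that for some $i$ one has $\lm(p_i)\lm(q_i)=y_1\cdots y_{m-1}x_m^2$. This is not justified: there can be cancellation among the leading terms of the summands $p_iq_i$, so all you actually know is that the monomial $y_1\cdots y_{m-1}x_m^2$ equals $M_1M_2$ for some monomial $M_1$ appearing in $p_i$ and some $M_2$ appearing in $q_i$. Your subsequent contradictions (``grevlex-larger than $y_Tx_m$, contradicting maximality of the leading monomial'') depend essentially on $M_1$ being the \emph{leading} monomial of an invariant, and collapse once that hypothesis is removed. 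For instance, your treatment of the $|T|=1$ case only shows that $y_jx_m$ is not a \emph{leading} monomial of a degree-two invariant; but in fact $x_my_{2i-m}$ genuinely occurs in $N_i$, so you cannot exclude it from appearing in $p_i$.

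The paper avoids this trap by proving the stronger assertion that no monomial of the form $(\prod_{j\in J}y_j)x_k$ with $J\subseteq\{1,\ldots,k-1\}$ and $|J|>1$ appears at all in a $G$-invariant. This is done by a short induction on $k-\max J$: from $M'y_dx_k$ in $f$ one gets $M'x_dy_k$ via Lemma~\ref{permute}, then tracks the $\Delta_2$-cancellation of $M'x_dx_{k-1}$; the only surviving candidate is $M'y_{d+1}x_{k-1}$, which is ruled out either by induction (if $d+1<k-1$) or by Lemma~\ref{ilk} (if $d+1=k-1$). Since $m>3$ forces $\deg(M_1)\geq 3$ and hence $|T|\geq 2$, this lemma finishes the proof immediately. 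Your ``cascade'' paragraph gestures at something like this induction but never pins down the inductive parameter or the terminal cases, and the reliance on leading-monomial maximality throughout means it cannot be repaired as written; you should instead aim directly for the ``does not appear in any invariant'' statement.
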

See subsection~\ref{easy-odd-proof} for the proof of Theorem~\ref{easy-odd-thm}.
Combining Theorem~\ref{easy-odd-thm} with Theorem~\ref{block-hsop-thm} gives the following.

\begin{corollary}\label{easy-odd-cor} If $m>3$, then the Noether number for $\Omega^{-m}(\field)$ is $m+1$
and the Hilbert ideal is generated by $\mathcal H_{-m}$.
\end{corollary}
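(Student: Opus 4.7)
The plan is to observe that this corollary is a direct consequence of combining Theorem~\ref{easy-odd-thm} (which asserts indecomposability of $\tr(y_1\cdots y_{m+1})$ for $m>3$) with the block-hsop machinery of Theorem~\ref{block-hsop-thm}. So there is essentially nothing to prove beyond a syntactic assembly, and any real content has already been done in the statements being quoted.

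First I would recall, from the paragraph introducing $\mathcal H_{-m}$, that the set $\mathcal H_{-m}=\{x_1,\ldots,x_m,N_1,\ldots,N_{m+1}\}$ is a block hsop for $\field[\Omega^{-m}(\field)]^G$ with top class $\beta:=y_1\cdots y_{m+1}$. The verification of the block-hsop property uses that $\lm(x_i)=x_i$ and $\lm(N_i)=y_i^2$ are pairwise relatively prime, so the criterion stated right before Theorem~\ref{block-hsop-thm} applies and the top class is $\beta$, whose degree is $m+1$.

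Next I would invoke Theorem~\ref{easy-odd-thm}, whose hypothesis $m>3$ matches the hypothesis of the corollary, to conclude that $\tr(\beta)$ is indecomposable in $\field[\Omega^{-m}(\field)]^G$. With both the block-hsop hypothesis and the indecomposability of $\tr(\beta)$ in hand, Theorem~\ref{block-hsop-thm}(a) gives that the Noether number equals $\deg(\beta)=m+1$, and Theorem~\ref{block-hsop-thm}(b) gives that the Hilbert ideal is generated by $\mathcal H_{-m}$, exactly as required.

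The only genuine obstacle in the whole argument is the indecomposability of $\tr(\beta)$, which is precisely the content of Theorem~\ref{easy-odd-thm} and whose proof is deferred to subsection~\ref{easy-odd-proof}; everything else in the corollary is a bookkeeping step.
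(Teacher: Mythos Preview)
Your proposal is correct and matches the paper's approach exactly: the paper states the corollary immediately after Theorem~\ref{easy-odd-thm} with the single sentence ``Combining Theorem~\ref{easy-odd-thm} with Theorem~\ref{block-hsop-thm} gives the following,'' which is precisely the assembly you describe. Your extra remark about the pairwise-coprime leading monomials of $\mathcal H_{-m}$ is already recorded in the paragraph preceding Theorem~\ref{easy-odd-thm}, so there is nothing further to add.
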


 Remarks~\ref{lowVremark} and \ref{easy-three} show that the above formula for the Noether number is valid for $m\geq 1$.

As in section~\ref{even-sec}, define $t_j:=u_{12}x_{j-1}+u_{1j}x_1$.

\begin{theorem}\label{FF-Vn-Theorem} For $m>2$,
$$\field[\Omega^{-m}(\field)]^G[x_1^{-1}]=\field[x_1,\ldots,x_m,N_1,N_2,t_{3},\ldots,t_{m+1}][x_1^{-1}].$$
\end{theorem}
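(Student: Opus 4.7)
The plan is to adapt the proof of Theorem~\ref{zero-field_thm}, using \cite[Theorem~2.4]{CampChuai} to adjoin the variables $y_1, y_2, \ldots, y_{m+1}$ one at a time. The essential observation is that, because the convention $x_0=0$ makes $\sigma_2(y_1)=y_1$, the $G$-stable span of $\{x_1,x_2,y_1,y_2\}$ inside $\Omega^{-m}(\field)^*$ is isomorphic to $V_{2,0}^*$ as a $G$-module, so the even-case base result applies verbatim to this subring.

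First, treating $x_3,\ldots,x_m$ as additional invariants, Remark~\ref{zero-two_remark} gives the base case
\[
\field[x_1,\ldots,x_m, y_1, y_2]^G[x_1^{-1}] = \field[x_1,\ldots,x_m, N_1, \widetilde{w}][x_1^{-1}],
\]
where $\widetilde{w}=(x_1+x_2)u_{12}+x_1n_2$. Next, for each $j\in\{3,\ldots,m+1\}$, the invariant $t_j=u_{12}x_{j-1}+u_{1j}x_1$ (with the convention $u_{1,m+1}=x_1y_{m+1}$ since $x_{m+1}=0$) lies in $\field[x_1,\ldots,x_m, y_1, y_2, y_j]^G$ and is linear in $y_j$ with coefficient $x_1^2$; the $G$-invariance of $t_{m+1}$ in particular follows from $\Delta_2(u_{12})=x_1^2$ and $\Delta_2(u_{1,m+1})=x_1x_m$ cancelling. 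Since $x_1^2$ is a unit after inverting $x_1$, iterated application of \cite[Theorem~2.4]{CampChuai} produces
\[
\field[\Omega^{-m}(\field)]^G[x_1^{-1}] = \field[x_1,\ldots,x_m, N_1, \widetilde{w}, t_3,\ldots,t_{m+1}][x_1^{-1}].
\]

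Finally, a direct expansion in characteristic~$2$ gives the identity $\widetilde{w}=x_1N_2+t_3$, so $\widetilde{w}$ can be exchanged for $N_2$ in the above generating set, yielding the stated formula. The only nontrivial ingredients are the base case (already handled by Remark~\ref{zero-two_remark}) and the verification that $t_{m+1}$ is $G$-invariant under the $x_{m+1}=0$ convention; the iteration itself is mechanical and the argument poses no substantive obstacle.
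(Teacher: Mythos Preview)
Your proof is correct and follows essentially the same route as the paper's: identify the span of $\{x_1,x_2,y_1,y_2\}$ with $V_{2,0}^*$, invoke Remark~\ref{zero-two_remark} for the base case, adjoin each $y_j$ ($j\ge 3$) via $t_j$ using \cite[Theorem~2.4]{CampChuai}, and then swap $\widetilde{w}$ for $N_2$ using $\widetilde{w}=x_1N_2+t_3$. Your explicit check that $t_{m+1}$ remains $G$-invariant under the convention $x_{m+1}=0$ is a helpful clarification, but otherwise the argument is the same.
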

\begin{proof}
We construct the field of fractions for an upper-triangular action as in \cite{CampChuai}
or \cite{Kang}. The restriction of the action of $G$ to the span of $\{x_1,x_2,y_1,y_2\}$
is $V_{2,0}^*$.
Therefore, using  Remark~\ref{zero-two_remark},
$\field[x_1,x_2,y_1,y_2]^G[x_1^{-1}]=\field[x_1,x_2,n_1,\widetilde{w}]^G[x_1^{-1}]$.
Since $t_j\in\field[x_1,\ldots,x_m,y_1,\ldots,y_j]^G$
has degree one as a polynomial in $y_j$ with coefficient $x_1^2$,
we have $\field[\Omega^{-m}(\field)]^G[x_1^{-1}]=\field[x_1,\ldots,x_m,n_1,\widetilde{w},t_{3},\ldots,t_{m+1}][x_1^{-1}].$
The result then follows from the fact that $\widetilde{w}=x_1N_2+t_3$ and $N_1=n_1$.
\end{proof}

\begin{remark}\label{lowVremark} It is easy to see that
$\field[\Omega^{-1}(\field)]^G=\field[x_1,n_1,y_2^2+x_1y_2]$.
A Magma calculation shows that $\field[\Omega^{-2}(\field)]^G$ is the hypersurface with generators
$x_1$, $x_2$, $N_1$, $N_2$, $N_3$, $t_3$ and relation
$t_3^2+x_2^4N_1+x_1x_2(x_1+x_2)t_3+x_1^2x_2^2N_2=x_1^4N_3$.
Therefore, the Noether number for this representation is $m+1=3$.
\end{remark}

\begin{remark}
It follows from Theorem~\ref{FF-Vn-Theorem} 
that applying the SAGBI/Divide-by-x algorithm of \cite{CSW} with $x=x_1$ to
$$\{x_1,\ldots,x_m,N_1,N_2,\ldots,N_{m+1},t_3,\ldots,t_{m+1}\}$$
produces a SAGBI basis for $\field[\Omega^{-m}(\field)]^G$.
\end{remark}

\begin{remark}\label{easy-three}
A Magma calculation shows that $\field[\Omega^{-3}(\field)]^G$ is generated by
$$\left\{x_1,x_2,x_3,n_1,N_2,N_3,n_4,t_3,t_4,u_{233},u_{133},\tr(y_1y_2y_3y_4)\right\}$$
where $u_{133}:=x_3u_{13}+x_1u_{24}$ and $u_{233}:=x_3u_{23}+x_2u_{24}+x_3u_{14}$.
Furthermore, this set is a SAGBI basis and running a SAGBI test with $\tr(y_1y_2y_3y_4)$ omitted, shows
that $\tr(y_1y_2y_3y_4)$ is indecomposable.
Therefore the Noether number for this representation is $m+1=4$ and the Hilbert ideal is generated by
the block hsop $x_1,x_2,x_3,n_1,N_2,N_3,n_4$.
From \cite{El-Fl}, we know ${\rm depth}(\field[\Omega^{-3}(\field)]^G)=6$.
The relation $x_2t_4+x_3t_3+x_1u_{133}=0$
shows that the
partial hsop $\{x_1,x_2,x_3\}$ is not a regular sequence, giving an alternate proof of the fact
that the ring is not Cohen-Macaulay.
\end{remark}
\medskip

\begin{proposition}
For $\ring=\field[\Omega^{-m}]$,
$(\Delta_2\ring)^{\sigma_1}=(\Delta_1S)^{\sigma_2}
=\left(\left(x_1,\ldots,x_m\right)\ring\right)^G=\sqrt{\tr\ring }$ and
$$\sqrt{\tr \ring}/\tr \ring\cong H^1(\langle\sigma_2\rangle,\Delta_1\ring)=H^1(\langle\sigma_1\rangle,\Delta_2\ring).$$
Furthermore $S^G/\sqrt{\tr S}\cong \field[N_1,\ldots,N_m]$.
\end{proposition}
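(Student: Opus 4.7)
The plan is to parallel the proof of Proposition~\ref{even-coh-prop}, with one small technical adjustment for $\sigma_2$ and a significant simplification in the reduction argument. First I would verify that
$$\Delta_1\Omega^{-m}(\field)^* \;=\; \Delta_2\Omega^{-m}(\field)^* \;=\; (\sigma_1\sigma_2+1)\Omega^{-m}(\field)^* \;=\; {\rm Span}_\field\{x_1,\ldots,x_m\},$$
which is immediate from $\Delta_1(y_j)=x_j$, $\Delta_2(y_j)=x_{j-1}$ and the conventions $x_0 = x_{m+1} = 0$. Applying \cite[Theorem~2.4]{modular-transfer} then gives $\sqrt{\tr\ring} = \left((x_1,\ldots,x_m)\ring\right)^G$.

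Corollary~\ref{del-lem} applied with $\sigma = \sigma_1$ is immediate and yields $\Delta_1\ring = ((x_1,\ldots,x_m)\ring)^{\sigma_1}$, so $(\Delta_1\ring)^{\sigma_2} = \sqrt{\tr\ring}$. For $\sigma_2$, the action does not fit Proposition~\ref{sigma-gen} directly because $y_1$ is $\sigma_2$-fixed; however, after the relabeling $Y_j := y_{j+1}$, $X_j := x_j$ for $j = 1,\ldots,m$, the subalgebra $\field[X_1,\ldots,X_m,Y_1,\ldots,Y_m]$ of $\ring$ is $\sigma_2$-stable and the action matches the form of Proposition~\ref{sigma-gen}. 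Tensoring with $\field[y_1]$, on which $\sigma_2$ acts trivially, extends Corollary~\ref{del-lem} to give $\Delta_2\ring = ((x_1,\ldots,x_m)\ring)^{\sigma_2}$, hence $(\Delta_2\ring)^{\sigma_1} = \sqrt{\tr\ring}$ as well. The cohomology identifications then follow from $\Delta_2\Delta_1 = \tr$ in characteristic $2$ and the cyclic-group cohomology formula used just above the statement of the proposition.

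For the final isomorphism, set $A := \field[N_1,\ldots,N_{m+1}]$. Algebraic independence of the $N_i$ modulo $\sqrt{\tr\ring}$ is immediate because $\mathcal H_{-m}$ is a homogeneous system of parameters for $\ring^G$. For the inclusion $\ring^G \subseteq A + \sqrt{\tr\ring}$, I would argue by minimal counter-example as in the even case. A minimal $f \in \ring^G \setminus (A + \sqrt{\tr\ring})$ may be taken homogeneous and in SAGBI-Gr\"obner normal form modulo $\sqrt{\tr\ring}$, so $\lm(f) = \prod y_i^{a_i}$; Lemma~\ref{ilk} forces each $a_i$ even, and since $f$ is non-constant some $a_k \geq 2$. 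Dividing $f$ by $N_k = y_k^2 + \widetilde N_k$ (with $\widetilde N_k \in (x_1,\ldots,x_m)\ring$) as a polynomial in $y_k$ yields $f = qN_k + r$ with $\deg_{y_k}(r) < 2$; uniqueness of the remainder together with $G$-invariance of $f$ and $N_k$ forces $q, r \in \ring^G$ with leading monomials strictly less than $\lm(f)$, contradicting minimality.

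The main point requiring care is the relabeling for $\sigma_2$. Otherwise the odd case is cleaner than the even one: the leading terms of $N_1, \ldots, N_{m+1}$ already exhaust $y_1^2, \ldots, y_{m+1}^2$, so no analogue of Proposition~\ref{double} is needed to find an $N_k$ whose leading term divides $\lm(f)$.
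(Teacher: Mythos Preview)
Your proposal is correct and matches the paper's own proof, which simply says the argument is analogous to Proposition~\ref{even-coh-prop} with the parenthetical remark that $\lt(N_i)=y_i^2$ makes Proposition~\ref{double} unnecessary---exactly the simplification you identify. Your explicit relabeling for $\sigma_2$ (since $y_1$ is $\sigma_2$-fixed) is a point the paper glosses over, and your use of $A=\field[N_1,\ldots,N_{m+1}]$ rather than $\field[N_1,\ldots,N_m]$ silently corrects what appears to be a typo in the statement (since $N_{m+1}\in S^G$ has image $y_{m+1}^2$ modulo $(x_1,\ldots,x_m)$).
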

\begin{proof}
The proof is analogous to the proof of Proposition~\ref{even-coh-prop}.
(Note that $\lt(N_i)=y_i^2$ and so an analogue of Proposition~\ref{double} is unnecessary.)
\end{proof}

\subsection{\rm Proof of Theorem~\ref{easy-odd-thm}}\label{easy-odd-proof}
\hfil

\medskip
Suppose, by way of contradiction, that $\tr(y_1\cdots y_{m+1})$ is
decomposable. Working modulo the $G$-stable ideal
$(x_1,\ldots,x_{m-1})S$, it easy to see that
$$\lt(\tr(y_1\cdots y_{m+1}))=y_1\cdots y_{m-1}x_m^2.$$
Thus there are two monomials, say $M_1$ and $M_2$, such that
$M_1M_2=y_1\cdots y_{m_-1}x_m^2$, $\deg(M_2)\leq\deg(M_1)<m+1$ and
both $M_1$ and $M_2$ appear in $G$-invariant polynomials. Since a
$G$-invariant is also a $\sigma_1$-invariant, it follows from
Lemma~\ref{ilk} that both $M_1$ and $M_2$ are divisible by $x_m$.
Since $m+1\geq 5$, we have $\deg(M_1)\geq 3$. The required
contradiction is then a consequence of the following lemma.

\begin{slemma}
Let $M=(\prod_{j\in J} y_j)x_k$  for some $k\le m$ and  set
$J\subseteq \{1, \dots , k-1\}$ with $|J|>1$.    Then $M$ does not
appear  with a non-zero coefficient in a $G$-invariant polynomial.
\end{slemma}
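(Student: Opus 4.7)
The plan is to argue by contradiction and induct on $n := k - \max J \geq 1$. Set $j^* := \max J$ and $M'' := \prod_{j \in J \setminus \{j^*\}} y_j$, so that $M = M'' y_{j^*} x_k$; suppose $M$ appears with non-zero coefficient in some $f \in S^G$.

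First I would use $\sigma_1$-invariance: rewriting $M = M'' \cdot x_k \cdot y_{j^*}$ and applying Lemma~\ref{permute} with $i = k$ and $j = j^*$ (noting that $\deg_{y_{j^*}}(M'') = \deg_{y_k}(M'') = 0$ are both even), I conclude that $N := M'' y_k x_{j^*}$ appears in $f$ with the same non-zero coefficient as $M$.

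Next I would exploit $\sigma_2$-invariance. The monomial $N' := M'' x_{k-1} x_{j^*}$ arises in $\Delta_2(N)$ by replacing $y_k$ with $x_{k-1}$; since $\Delta_2(f) = 0$, the coefficient of $N'$ in $\Delta_2(f)$ must vanish. A direct enumeration of the monomials $M^{\ast\ast}$ with $N' \in \mathrm{supp}(\Delta_2(M^{\ast\ast}))$ (using $\sigma_2(y_l) = y_l + x_{l-1}$ and $x_0 = 0$) yields exactly $N$, the monomial $N_C := M'' y_{j^*+1} x_{k-1}$, and the pure-$y$ monomial $N_D := M'' y_k y_{j^*+1}$ (with the identification $y_{j^*+1} = y_k$, and $N_D$ becoming $M'' y_k^2$, when $k = j^*+1$). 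Lemma~\ref{ilk}(i) kills $N_D$: it is a pure-$y$ monomial in which every $y_j$ for $j \in J \setminus \{j^*\}$ (a non-empty set) occurs with odd degree $1$. In characteristic $2$ the vanishing equation therefore collapses to $\mathrm{coeff}_f(N) = \mathrm{coeff}_f(N_C)$, so $N_C$ must also appear in $f$ with non-zero coefficient.

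To conclude I would rule out $N_C$ by cases on $n$. If $n = 1$, the enumeration collapses, leaving only the pure-$y$ companion $M'' y_k^2$ which is killed by Lemma~\ref{ilk}, directly forcing $\mathrm{coeff}_f(N) = 0$ --- a contradiction. If $n = 2$, then $N_C = M'' y_{j^*+1} x_{j^*+1}$, so Lemma~\ref{lift} forces $M'' y_{j^*+1}^2$ to appear in $f$, again a pure-$y$ monomial with odd exponents on $J \setminus \{j^*\}$, contradicting Lemma~\ref{ilk}. If $n \geq 3$, then $N_C$ satisfies the hypothesis of the lemma with new data $J^{\mathrm{new}} := (J \setminus \{j^*\}) \cup \{j^*+1\} \subseteq \{1, \ldots, k^{\mathrm{new}} - 1\}$, $k^{\mathrm{new}} := k - 1$, $|J^{\mathrm{new}}| = |J| > 1$, and strictly smaller invariant $k^{\mathrm{new}} - \max J^{\mathrm{new}} = n - 2$, so the inductive hypothesis excludes $N_C$. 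The main obstacle I anticipate is the combinatorial enumeration in the middle step: one must carefully pin down every $M^{\ast\ast}$ contributing to the coefficient of $N'$ in $\Delta_2(f)$, including the degenerate collapse when $k = j^* + 1$; once that list is correct the rest is a clean application of the preliminary lemmas.
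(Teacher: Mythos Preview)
Your proposal is correct and follows essentially the same route as the paper's proof: both induct on $k-\max J$, apply Lemma~\ref{permute} (for $\sigma_1$) to pass from $M'' y_{j^*} x_k$ to $M'' x_{j^*} y_k$, then enumerate the monomials producing $M'' x_{j^*} x_{k-1}$ under $\Delta_2$ and eliminate them via Lemma~\ref{ilk} and the inductive hypothesis. The only cosmetic difference is that you treat $n=2$ via Lemma~\ref{lift} followed by Lemma~\ref{ilk}(i), whereas the paper dispatches that case by a direct appeal to (a symmetric form of) Lemma~\ref{ilk}.
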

\begin{proof}
Let $d$ denote the maximum integer in $J$. We proceed by
induction on $k-d$. Assume on the contrary that $M$ appears in a
$G$-invariant polynomial $f$.  Set $M'=\prod_{j\in J, \; j\neq d}
y_j$. Then we have $M=M'y_dx_k$. From  Lemma \ref{permute} we get
that  $M'x_dy_k$ also appears in $f$.
Furthermore, since $M'x_dx_{k-1}$
appears in $\Delta_2(M'x_dy_k)$, there must be
another monomial in $f$ that produces $M'x_dx_{k-1}$ after
applying $\Delta_2$.
If $k-d=1$, then the only other monomial that
produces $M'x_dx_{k-1}=M'x_d^2$ after applying $\Delta_2$ is
$M'y_k^2$. However, this monomial can not appear in $f$ by Lemma \ref{ilk}. This
establishes the basis case for the induction. If $k-d>1$, the only
monomials (other than $M'x_dy_k$) that produce
$M'x_dx_{k-1}$ after applying $\Delta_2$ are $M'y_{d+1}y_k$ and $M'y_{d+1}x_{k-1}$.
Again by Lemma~\ref{ilk}, $M'y_{d+1}y_k$ can not appear in $f$.
Moreover, if $d+1<k-1$, then $M'y_{d+1}x_{k-1}$ does not appear in $f$ by
induction.
On the other hand, if $d+1=k-1$, then $M'y_{d+1}x_{k-1}$ does not appear in $f$  by
Lemma~\ref{ilk}.
\end{proof}

\section{The Hard Odd Case}
\label{hard}

In this section we consider the odd dimensional representations
$\Omega^{m}(\field)$. The action of $G$ on
$S:=\field[\Omega^{m}(\field)]=\field[x_1,\ldots,x_{m+1},y_1,\ldots,y_{m}]$
is given by $\sigma_i(x_j)=x_j$, $\sigma_1(y_j)=y_j+x_j$ and
$\sigma_2(y_j)=y_j+x_{j+1}$. Define $$\mathcal
H_{m}:=\{x_1,\ldots,x_{m+1},\norm{y_1},\ldots,\norm{y_m}\}.$$

Since $\lm(\norm{y_i})=y_i^4$, $\mathcal H_{m}$ is a block hsop with top class $(y_1\cdots y_{m})^3$
and the image of the transfer is generated by $\tr(\beta)$ for $\beta$ dividing $(y_1\cdots y_{m})^3$.

\begin{theorem}\label{hard-odd-thm}
For $m>2$, $\tr(y_1^3\cdots y_{m}^3)$ is indecomposable.
\end{theorem}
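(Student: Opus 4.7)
The plan follows the template of the proofs of Theorems~\ref{even-indec} and \ref{easy-odd-thm}: locate the leading monomial of $\tr(y_1^3 \cdots y_m^3)$, then show that no nontrivial factorization of this monomial is compatible with both factors appearing in $G$-invariants. First I compute $\lt(\tr(y_1^3 \cdots y_m^3))$ by reducing modulo the $G$-stable ideal $(x_1,\ldots,x_{m-1})\, S$. In the quotient, $y_1,\ldots,y_{m-2}$ are $G$-fixed and only $\sigma_1(y_m) = y_m + x_m$, $\sigma_2(y_{m-1}) = y_{m-1} + x_m$, $\sigma_2(y_m) = y_m + x_{m+1}$ remain nontrivial; a direct expansion of $\tr(y_{m-1}^3 y_m^3)$ followed by grevlex comparison yields
\[
\lt\!\bigl(\tr(y_1^3 \cdots y_m^3)\bigr) = y_1^3 \cdots y_{m-1}^3\, x_m x_{m+1}^2,
\]
and this is the leading monomial in $S$ because any omitted term carries an extra factor of some $x_i$ with $i < m$ and so is smaller in grevlex.

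Now suppose for contradiction that $\tr(y_1^3 \cdots y_m^3)$ is decomposable. Then $y_1^3 \cdots y_{m-1}^3\, x_m x_{m+1}^2 = M_1 M_2$ with each $M_i$ of positive degree and appearing in a homogeneous $G$-invariant $f_i$. Writing $M_i = \bigl(\prod_{j=1}^{m-1} y_j^{a_j^{(i)}}\bigr) x_m^{a^{(i)}} x_{m+1}^{b^{(i)}}$ (no $y_m$ occurs), we have $a_j^{(1)}+a_j^{(2)}=3$, $a^{(1)}+a^{(2)}=1$, and $b^{(1)}+b^{(2)}=2$. Since $a^{(1)}+a^{(2)}=1$, exactly one factor, say $M_1$, has $a^{(1)}=0$; factoring out the $G$-fixed $x_{m+1}^{b^{(1)}}$ from $f_1$ makes $\prod_j y_j^{a_j^{(1)}}$ a pure-$y$ monomial in a $\sigma_1$-invariant polynomial of $\field[x_1,\ldots,x_m,y_1,\ldots,y_m]$. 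Lemma~\ref{ilk}(i) then forces each $a_j^{(1)}$ to be even, so every $a_j^{(2)} = 3 - a_j^{(1)}$ is odd.

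Next I would analyse $f_2$. Let $h$ denote the coefficient of $x_{m+1}^{b^{(2)}}$ in $f_2$, a $\sigma_1$-invariant polynomial containing $M_2' := \bigl(\prod_{j} y_j^{a_j^{(2)}}\bigr) x_m$. Enumerating the $\Delta_1$-preimages in $S$ of the target monomial $y_1^{a_1^{(2)}-1} x_1 \prod_{j=2}^{m-1} y_j^{a_j^{(2)}}\, x_m$ produces four candidates; the pure-$y$ preimage has all $y$-exponents odd and is eliminated by Lemma~\ref{ilk}(i), while the preimage equal to the target itself contributes zero to $\Delta_1$. The cancellation equation for $\Delta_1(h)=0$ therefore forces
\[
N_b := y_1^{a_1^{(2)}-1}\, x_1 \prod_{j=2}^{m-1} y_j^{a_j^{(2)}}\, y_m
\]
to appear in $h$, and hence $N_b\, x_{m+1}^{b^{(2)}} \in f_2$. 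The contradiction then comes from an $\Omega^m(\field)$-analogue of the first assertion of Lemma~\ref{shift}: rerunning its proof with $x_{j+1}$ in place of $x_{j-1}$ (so that $x_1$ plays the role formerly played by $x_m$, being the unique $x$-variable not in the image of $\Delta_2$ on any $y_j$) shows that $M' y_i x_1$ cannot appear in a $\sigma_2$-invariant polynomial when the $y_i$-degree in $M'$ is even. Taking $M' = y_1^{a_1^{(2)}-1} \prod_{j=2}^{m-1} y_j^{a_j^{(2)}}$ (whose $y_m$-degree is $0$) and $i=m$ excludes $N_b\, x_{m+1}^{b^{(2)}}$ from $f_2$, giving the desired contradiction.

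The main obstacle will be formulating and verifying this $\Omega^m(\field)$-analogue of Lemma~\ref{shift}: the re-enumeration of $\Delta_2$-preimages in the $\sigma_2(y_j) = y_j + x_{j+1}$ setting must be done carefully, and the competing preimages (which now may carry several powers of $y_m$ absorbed by the $x_{m+1}$-grading of $f_2$) must all be eliminated, combining Lemma~\ref{ilk}(i) applied under $\sigma_1$ with the parities of $a_j^{(2)}$ already established. The boundary case $m=3$ fits into the same framework since $M_2'$, $M'$, and $N_b$ remain well-defined and nonempty there.
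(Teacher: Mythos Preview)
Your approach matches the paper's template up through the parity step: the lead term computation is right, and your observation that the coefficient of $x_{m+1}^{b^{(1)}}$ in $f_1$ is $\sigma_1$-invariant correctly forces all $a_j^{(1)}$ even and all $a_j^{(2)}$ odd. Your $\Delta_1$-enumeration forcing $N_b$ into $h$ is also correct.

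The gap is in the final step. Your ``$\Omega^m$-analogue of Lemma~\ref{shift}'' is stated for $\sigma_2$-invariants, but the proof of Lemma~\ref{shift} uses Lemma~\ref{permute}, which needs $\sigma_1$-invariance; so the analogue must be for $G$-invariants. More seriously, the monomial you must exclude is $N_b\,x_{m+1}^{b^{(2)}}$, not $M'y_mx_1$, and the extra $x_{m+1}$-power matters. Passing to the $\sigma_2$-invariant coefficient $g$ of $x_1$ in $f_2$, the cases $b^{(2)}=0$ and $b^{(2)}=1$ do fall to Lemma~\ref{ilk}(i) and~(ii) respectively (your sketch is essentially right there). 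But for $b^{(2)}=2$ the monomial $M'y_m x_{m+1}^2$ lies in $g$, and the $\Delta_2$-cancellation equation at $M'x_{m+1}^3$ only gives $[M'y_m x_{m+1}^2]=[M'y_m^2 x_{m+1}]$; eliminating the right-hand side is not covered by Lemma~\ref{ilk}, and your sentence about ``competing preimages absorbed by the $x_{m+1}$-grading'' does not supply an argument.

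The case $b^{(2)}=2$ is exactly the case where your $M_1$ is a pure-$y$ monomial $\prod_{j\in J}y_j^2$, and the paper handles it up front via Proposition~\ref{ciftiki}: an inductive walk alternating Lemma~\ref{permute} and Lemma~\ref{delta2} that pushes such a monomial to one containing $x_1$ or $x_{m+1}$ with an odd $y$-exponent, contradicting Proposition~\ref{artikiki}. That argument (or something equivalent) is what is missing from your plan. Once pure-$y$ factors are excluded, the paper applies $\Delta_2$ directly to the $x_m$-factor rather than detouring through $N_b$; this gives a shorter case analysis (only $b^{(2)}\in\{0,1\}$ remain) and avoids the $x_{m+1}$-grading complications entirely.
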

See subsection~\ref{hard-odd-proof} for the proof of Theorem~\ref{hard-odd-thm}.
Combining Theorem~\ref{hard-odd-thm} with Theorem~\ref{block-hsop-thm} gives the following.

\begin{corollary}\label{hard-odd-cor} If $m>2$, then the Noether number for $\Omega^{m}(\field)$ is $3m$
and the Hilbert ideal is generated by $\mathcal H_{m}$.
\end{corollary}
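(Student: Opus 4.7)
The corollary is an immediate application of Theorem~\ref{block-hsop-thm}; the plan is to verify its two hypotheses for $\Omega^m(\field)$ with the given $\mathcal H_m$ and top class $\beta = y_1^3 \cdots y_m^3$, and then invoke parts~(a) and~(b).

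The block-hsop property of $\mathcal H_m$ with top class $\beta$ has already been recorded in the paragraph immediately preceding Theorem~\ref{hard-odd-thm}: the leading monomials of the elements of $\mathcal H_m$ are $x_1,\ldots,x_{m+1},y_1^4,\ldots,y_m^4$, which are pairwise coprime, so $\mathcal H_m$ is a block hsop whose top class is the unique maximal reduced monomial $\beta = y_1^3 \cdots y_m^3$, of degree $3m$. The indecomposability of $\tr(\beta)$ in $\field[\Omega^m(\field)]^G$ for $m > 2$ is precisely the content of Theorem~\ref{hard-odd-thm}, which is supplied (modulo its own deferred proof in subsection~\ref{hard-odd-proof}).

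With both hypotheses in hand, Theorem~\ref{block-hsop-thm}(a) gives that the Noether number of $\Omega^m(\field)$ equals $\deg(\beta) = 3m$, and Theorem~\ref{block-hsop-thm}(b) gives that the Hilbert ideal is generated by $\mathcal H_m$, exactly as claimed.

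The only genuine obstacle in this chain lies inside Theorem~\ref{hard-odd-thm}, and any proof of it is expected to follow the combinatorial template of Theorems~\ref{even-indec} and~\ref{easy-odd-thm}: assume by contradiction that $\tr(\beta)$ decomposes, compute $\lt(\tr(\beta))$ by reducing modulo a judiciously chosen $G$-stable ideal (a natural first guess, given that the $\Omega^m(\field)$ action has one more $x$-variable than $y$-variable, is $(x_1,\ldots,x_{m-1},x_m^2)$, modulo which a direct expansion of $\tr = \Delta_1\Delta_2$ suggests the leading monomial is $y_1^3\cdots y_{m-1}^3 x_m x_{m+1}^2$), write this monomial as a product $M_1 M_2$ of two factors of positive degree that must each appear in a $G$-invariant, and rule out every such factorisation. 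I would expect the hardest part to be designing the $\Omega^m$-adapted analogue of Lemma~\ref{shift}: the $\sigma_2$-action here is $y_j\mapsto y_j+x_{j+1}$ rather than $y_j\mapsto y_j+\lambda x_j+x_{j-1}$, so the chain of implications linking the appearance of different monomials in a single $G$-invariant is shifted by one index and must be re-derived, with the remaining deductions then flowing from Lemmas~\ref{ilk}, \ref{permute}, \ref{lift} and Proposition~\ref{double} in much the same way as in subsection~\ref{even-indec-proof}.
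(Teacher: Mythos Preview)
Your derivation of the corollary is correct and identical to the paper's: it is simply the combination of Theorem~\ref{hard-odd-thm} with Theorem~\ref{block-hsop-thm}, using the block hsop $\mathcal H_m$ with top class $(y_1\cdots y_m)^3$ recorded just before Theorem~\ref{hard-odd-thm}.

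Your sketch of how Theorem~\ref{hard-odd-thm} should go is also broadly accurate---the paper works modulo $(x_1,\ldots,x_{m-1})S$ and obtains exactly the leading monomial $y_1^3\cdots y_{m-1}^3x_mx_{m+1}^2$ that you predict---but note that Proposition~\ref{double} and Lemma~\ref{lift} are not reused; instead the paper proves $\Omega^m$-specific replacements (an index-shifted analogue of Lemma~\ref{shift}, a parity result exploiting the tensor decompositions $S^{\sigma_1}\cong\field[x_i,y_i\mid i\le m]^{\sigma_1}\otimes\field[x_{m+1}]$ and $S^{\sigma_2}\cong\field[x_{i+1},y_i\mid i\le m]^{\sigma_2}\otimes\field[x_1]$, and an analogue of Proposition~\ref{double} ruling out pure squares $\prod y_j^2$).
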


From Remark~\ref{hard-two}, the Noether number for $\Omega^{2}(\field)$ is $6$.

For $j>1$, define $v_j:=u_{1j}(x_2^2+x_1x_2)+n_1(x_jx_2+x_1x_{j+1})$.

\begin{theorem} For $m>1$,
$$\field[\Omega^m]^G[(x_1x_2(x_1+x_2))^{-1}]
=\field[x_1,\ldots,x_{m+1},N_G(y_1),v_2,\ldots,v_m] [(x_1x_2(x_1+x_2))^{-1}].$$
\end{theorem}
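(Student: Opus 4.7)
The plan is to imitate the upper-triangular chain of localisations used in the proofs of Theorem~\ref{field_thm} and Theorem~\ref{zero-field_thm}, applying \cite[Theorem~2.4]{CampChuai} to adjoin one $y$-variable at a time. I would start with the subalgebra on $\{x_1,\ldots,x_{m+1},y_1\}$ and then successively incorporate $y_2,\ldots,y_m$ using the invariants $v_2,\ldots,v_m$, each of which is degree one in the next $y$-variable with leading coefficient equal to $x_1x_2(x_1+x_2)$, exactly the element being inverted.

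For the base case, I would observe that $x_3,\ldots,x_{m+1}$ are $G$-fixed and split off, reducing to the $3$-dimensional subrepresentation on $\{x_1,x_2,y_1\}$. The orbit of $y_1$ is the four-element set $\{y_1,\,y_1+x_1,\,y_1+x_2,\,y_1+x_1+x_2\}$, so $N_G(y_1)$ is monic of degree $|G|=4$ in $y_1$ over $\field[x_1,x_2]$, and the six pairwise differences of orbit elements are $x_1,x_2,x_1+x_2$ (each appearing twice). Hence the discriminant of $N_G(y_1)$, viewed as a polynomial in $y_1$, equals $\bigl(x_1x_2(x_1+x_2)\bigr)^{4}$ up to a unit, so after inverting $x_1x_2(x_1+x_2)$ the extension $\field[x_1,x_2,y_1]\supset\field[x_1,x_2,N_G(y_1)]$ becomes \'etale and $G$-Galois. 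Taking invariants gives
$$\field[x_1,\ldots,x_{m+1},y_1]^G[(x_1x_2(x_1+x_2))^{-1}]=\field[x_1,\ldots,x_{m+1},N_G(y_1)][(x_1x_2(x_1+x_2))^{-1}].$$

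For the inductive step, I would verify that $v_j\in S^G$ by noting $u_{1j},n_1\in S^{\sigma_1}$ and computing, in characteristic two, $\Delta_2(u_{1j})=x_1x_{j+1}+x_2x_j$ together with $\Delta_2(n_1)=x_2^2+x_1x_2$, which combine to give $\Delta_2(v_j)=0$. Viewed as a polynomial in $y_j$, $v_j$ has degree one with leading coefficient $x_1(x_2^2+x_1x_2)=x_1x_2(x_1+x_2)$ and its constant term lies in $\field[x_1,\ldots,x_{j+1},y_1]\subset\field[x_1,\ldots,x_{m+1},y_1,\ldots,y_{j-1}]$. Consequently \cite[Theorem~2.4]{CampChuai} allows one to adjoin $y_j$ to the localised invariant ring via $v_j$, and iterating for $j=2,\ldots,m$ yields the stated identity. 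The main obstacle is the base case: one must see that inverting exactly $x_1x_2(x_1+x_2)$ suffices to render $y_1$ separable over the base, which is what the above discriminant computation delivers; the inductive step is then a mechanical application of the Campbell--Chuai framework.
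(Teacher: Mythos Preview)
Your argument is correct and follows the same overall architecture as the paper: handle the base case $\field[x_1,\ldots,x_{m+1},y_1]^G$, then adjoin $y_2,\ldots,y_m$ one at a time via \cite[Theorem~2.4]{CampChuai}, using that each $v_j$ is degree one in $y_j$ with coefficient $x_1x_2(x_1+x_2)$.

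The only real difference is in the base case. You compute the discriminant of $N_G(y_1)$ and invoke an \'etale $G$-Galois descent argument after localisation. The paper instead uses the stronger and simpler fact that $\field[x_1,\ldots,x_{m+1},y_1]^G$ is \emph{already} the polynomial ring $\field[x_1,\ldots,x_{m+1},N_G(y_1)]$ before any localisation (the $x_i$ are fixed and the action on the single remaining variable $y_1$ is by translations, so the invariants are generated by the orbit product; compare the use of \cite[Theorem~3.75]{DK} earlier in the paper). Your route works, but the direct observation avoids the discriminant and \'etaleness entirely and gives an unlocalised statement for free. The inductive step is identical in both treatments.
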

\begin{proof}
We use \cite[Theorem~2.4]{CampChuai}. $\field[x_1,\ldots,x_m,y_1]^G$
is the polynomial algebra generated by
$\{x_1,\ldots,x_m,N_G(y_1)\}$. The invariant $v_j\in\field[x_1,x_2,x_j,x_{j+1},y_1,y_j]$
has degree one as a polynomial in $y_j$ and the coefficient of $y_j$ is
$x_1x_2(x_1+x_2)$.
\end{proof}

It is easy to see that $\field[\Omega^1(\field)]^G=\field[x_1,x_2,N_G(y_1)]$
and, therefore, the Noether number is $4$.

\begin{remark}\label{hard-two}
A Magma calculation shows that $\field[\Omega^2(\field)]^G$ is generated by
$$\mathcal B_2:=\{x_1, x_2, x_3, \norm{y_1}, \norm{y_2},  v_2, n_{13}, u_{1233}, \tr(y_1^3y_2^3)\}$$
where $n_{13}=x_3n_1+x_3u_{12}+x_1n_2$ and
$u_{1233}=(x_3^2+x_2x_3)u_{12}+(x_2^2+x_1x_3)n_2$.
Therefore the Hilbert ideal for $\Omega^2(\field)$ is generated by $x_1,x_2,x_3,\norm{y_1},\norm{y_2}$.
In fact, $\mathcal B_2$ a SAGBI bases using grevlex with $y_2>y_1>x_3>x_2>x_1$.
Although a SAGBI basis need not be a minimal generating set, running a SAGBI basis test on
$\mathcal B_2\setminus\{\tr(y_2^3y_3^3)\}$ shows that $\tr(y_2^3y_3^3)$ is indecomposable and
hence the Noether number is $6$.
From \cite{El-Fl}, we know ${\rm depth}(\field[\Omega^2(\field)]^G)=4$.
The relation $x_3v_2+(x_2^2+x_1x_3)n_{13}+ x_1u_{1233}=0$ shows that the
partial hsop $\{x_1,x_2,x_3\}$ is not a regular sequence, giving an alternate proof of the fact
that the ring is not Cohen-Macaulay.
\end{remark}

\begin{remark}
We have been unable to find ``polynomial generators'' for the ring $\field[\Omega^m(\field)]^G[x_1^{-1}]$.
We note that $x_1$ is not in the radical of the image of the transfer for these representations
but that  $x_1x_2(x_1+x_2)$ is.
Furthermore, $x_1$ is in the radical of the image of the transfer for $\Omega^{-m}(\field)$ and $V_{m,\lambda}$.
Hence $\field[\Omega^{-m}]^G[x_1^{-1}]$ and $\field[V_{m,\lambda}]^G[x_1^{-1}]$ are ``trace-surjective''
in the sense of \cite{Fl-W}.
\end{remark}

\begin{proposition}
For $\ring=\field[\Omega^{m}(\field)]$ and $m\geq 3$,
$$\sqrt{\tr\ring}=((x_2x_{m+1}+x_2x_1,x_1x_{m+1}+x_1x_2,x_2^2+x_2x_1,x_3+x_2,\ldots,x_m+x_2)S)^G.$$
\end{proposition}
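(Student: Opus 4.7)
The plan is to invoke the characterisation of the radical of the transfer from \cite[Theorem~2.4]{modular-transfer} and to reduce the proposition to an ideal identity in $R := \field[x_1,\dots,x_{m+1}]$, which in turn is handled by a change of coordinates to a $3$-variable computation.

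First, \cite[Theorem~2.4]{modular-transfer} gives
$$\sqrt{\tr\ring} = \bigcap_{g \in G,\, |g|=2} \bigl(((g-1) V^*)\,S\bigr)^G.$$
Direct computation shows that $(\sigma_1-1)V^*$, $(\sigma_2-1)V^*$, and $(\sigma_1\sigma_2-1)V^*$ are spanned over $\field$ by $\{x_1,\dots,x_m\}$, $\{x_2,\dots,x_{m+1}\}$, and $\{x_1+x_2,\dots,x_m+x_{m+1}\}$, respectively. Since all the generators in question lie in $R$ and $S = R[y_1,\dots,y_m]$ is $R$-flat, the triple intersection commutes with extension of scalars from $R$ to $S$; since intersections commute with the fixed-point functor $(-)^G$, the proposition reduces to the ideal identity
$$\mathcal{I} := \bigl((x_1,\dots,x_m)R\bigr) \cap \bigl((x_2,\dots,x_{m+1})R\bigr) \cap \bigl((x_1+x_2,\dots,x_m+x_{m+1})R\bigr) = \mathcal{J}$$
in $R$, where $\mathcal{J}$ denotes the ideal generated in $R$ by the elements listed in the statement.

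The inclusion $\mathcal{J} \subseteq \mathcal{I}$ is a routine check: each linear generator $x_j+x_2$ with $3 \leq j \leq m$ lies in each of the three primes (the first two contain both $x_2$ and $x_j$, and $x_j+x_2 = \sum_{i=2}^{j-1}(x_i+x_{i+1})$ in the third); and each $a_i$ has the form $x_2\cdot \ell$ or $x_1\cdot \ell$ with $\ell$ visibly in all three ideals, using $x_1+x_{m+1} = \sum_{i=1}^{m}(x_i+x_{i+1})$ in characteristic $2$.

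For the reverse inclusion observe that both $\mathcal{I}$ and $\mathcal{J}$ contain $L := (x_3+x_2,\dots,x_m+x_2)R$, and therefore descend to ideals $\overline{\mathcal{I}},\overline{\mathcal{J}}$ in $\overline{R} := R/L \cong \field[x_1,x_2,x_{m+1}]$. Under this reduction the three primes collapse to $(x_1,x_2)$, $(x_2,x_{m+1})$, and $(x_1+x_2,x_2+x_{m+1})$: the defining ideals of three distinct lines through the origin whose tangent directions span $\overline{R}_1$. The coordinate ring of this triple-point curve therefore has Hilbert series $1 + 3t/(1-t)$. On the other hand, the images $\overline{a}_1,\overline{a}_2,\overline{a}_3$ impose the relations $x_2^2 = x_1x_2 = x_1x_{m+1} = x_2x_{m+1}$ in $\overline{R}/\overline{\mathcal{J}}$, and a short induction on degree shows that every monomial of total degree $n \geq 2$ reduces modulo $\overline{\mathcal{J}}$ to one of $x_1^n$, $x_{m+1}^n$, $x_1^{n-1}x_2$; hence $\overline{R}/\overline{\mathcal{J}}$ has the same Hilbert series. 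Combined with $\overline{\mathcal{J}} \subseteq \overline{\mathcal{I}}$ and homogeneity this forces $\overline{\mathcal{J}} = \overline{\mathcal{I}}$, hence $\mathcal{J} = \mathcal{I}$. The main obstacle is the monomial reduction establishing the Hilbert series of $\overline{R}/\overline{\mathcal{J}}$; everything else is formal.
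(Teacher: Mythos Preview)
Your argument is correct and follows the same overall route as the paper: invoke \cite[Theorem~2.4]{modular-transfer}, identify the three images $(\sigma_i-1)V^*$, and reduce to computing the triple intersection of the corresponding ideals. The paper's own proof stops at ``computing intersections of ideals gives'' the stated answer, so your contribution is precisely to supply that computation. Your method---quotienting by the linear part $L=(x_3+x_2,\ldots,x_m+x_2)$ to land in a three-variable ring and then matching Hilbert series---is a clean way to carry this out, and all the steps check (in particular, each $P_i$ contains $L$, so the intersection passes cleanly to $R/L$).
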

\begin{proof}
 Direct calculation gives
$\Delta_1 (\Omega^m(F)^*)={\rm Span}_{\field}\{x_1,\ldots,x_m\}$,
$\Delta_2 (\Omega^m(F)^*)={\rm Span}_{\field}\{x_2,\ldots,x_{m+1}\}$, and
$(\sigma_1\sigma_2+1) (\Omega^m(F)^*)={\rm Span}_{\field}\{x_1+x_2,\ldots,x_m+x_{m+1}\}$.
Using \cite[Theorem~2.4]{modular-transfer} and computing intersections of ideals gives
\begin{eqnarray*}
\sqrt{\tr\ring}&=&\bigcap_{g\in G,\,|g|=2}(((g-1)\Omega^m(\field)^*)S)^G\cr
&=&((x_2x_{m+1}+x_2x_1,x_1x_{m+1}+x_2x_1,x_2^2+x_2x_1,x_3+x_2,\ldots,x_m+x_2)S)^G.
\end{eqnarray*}
\end{proof}

\begin{remark} The above shows that for $m\geq 3$, we have $x_2+x_3\in\sqrt{\tr\ring}$.
In fact, for $$\alpha:=(x_1+x_2+x_3)y_2y_3+(x_1+x_2+x_3+x_4)y_1y_3+(x_2+x_3+x_4)y_1y_2+y_1^2y_3+y_1y_3^2,$$
$\tr(\alpha)=(x_2+x_3)^3$.
Define $x:=x_2+x_3$ and use the variables $x<x_1<x_3<x_4<\cdots<x_{m+1}<y_1<\cdots<y_m$
with the grevlex order. Define $\rho:\field[\Omega^m(\field)][x^{-1}]\to\field[\Omega^m(\field)]^G[x^{-1}]$
by $\rho(f)=x^{-3}\tr(f\alpha)$. Then $\rho$ restricts to the identity on $\field[\Omega^m(\field)]^G$
and  $\field[\Omega^m(\field)]^G[x^{-1}]$ is ``trace-surjective''.
Define $$\mathcal{B}_m:=\mathcal{H}_m\cup\{\tr(\beta)\mid \beta\, {\rm divides}\, (y_1\cdots y_m)^3\}.$$
Since $\{\beta\mid \beta\, {\rm divides}\, (y_1\cdots y_m)^3\}$ generates $\field[\Omega^m(\field)][x^{-1}]$
as a module over the ring $\field[\mathcal H_m][x^{-1}]$ and $\rho$ is surjective,
we see that  $\mathcal B_m\cup\{x^{-1}\}$ generates $\field[\Omega^m(\field)]^G[x^{-1}]$. Thus,
since $\mathcal H_m$ is an hsop, applying the SAGBI/Divide-by-$x$ algorithm to $\mathcal B_m$
produces a generating set, in fact a SAGBI basis, for $\field[\Omega^m(\field)]^G$.
\end{remark}

\medskip

\subsection{\rm Proof of Theorem~\ref{hard-odd-thm}} \label{hard-odd-proof}
\hfil

\medskip
Suppose, by way of contradiction,
that $\tr(y_1^3\cdots y_{m}^3)$ is decomposable.
Working modulo the $G$-stable ideal $(x_1, \dots ,x_{m-1})S$,
it is not difficult to see that
$$\lt (\tr(y_1^3\cdots y_{m}^3))=y_1^3\cdots y_{m-1}^3x_{m}x_{m+1}^2.$$
Write $y_1^3\cdots y_{m-1}^3x_{m}x_{m+1}^2=M_1M_2$, where $M_1$
and $M_2$ are monomials of positive degree which appear in
$G$-invariant polynomials. We use the following results to
eliminate possible factorisations.

\begin{slemma}
\label{delta2} Suppose  $1\le i \le m$, $2 \le k \le m+1$, $k\neq
i+1$ and  $M$ is a monomial in $y_1,\ldots,y_m$.
Further suppose that the degree of $y_i$ in $M$
is even and $y_ix_kM$  appears in a $G$-invariant
polynomial $f$. Then the degree of $y_{k-1}$ in $M$ is even and
$x_{i+1}y_{k-1}M$  appears in $f$.
\end{slemma}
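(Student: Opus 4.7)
The plan is to exploit the equation $\Delta_2(f) = 0$ at the specific monomial $x_{i+1} x_k M$. Since $\sigma_2(y_i) = y_i + x_{i+1}$, the monomial $x_{i+1} x_k M$ appears with coefficient $1$ in $\Delta_2(y_i x_k M)$. Because $f$ is $\sigma_2$-invariant, other monomials of $f$ must also produce $x_{i+1} x_k M$ under $\Delta_2$, and all contributions must sum to $0$ in characteristic $2$.

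The central step is to enumerate every monomial $N$ such that $\Delta_2(N)$ contains $x_{i+1} x_k M$ with nonzero coefficient. Writing $N = N_x N_y$ with $N_x$ in the $x$-variables and $N_y$ in the $y$-variables, this amounts to choosing which factors $y_j$ of $N_y$ are replaced by $x_{j+1}$ under $\sigma_2$ in such a way that the resulting $x$'s combine with $N_x$ to give $x_{i+1} x_k$ while the surviving $y$'s give $M$. The hypothesis $k \ne i+1$ forces $x_{i+1} \ne x_k$, which eliminates the otherwise troublesome ``double-shift'' contributions such as $y_i^2 \mapsto x_{i+1}^2$ or $y_{k-1}^2 \mapsto x_k^2$. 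A short case analysis then shows that, aside from the trivial $N = x_{i+1} x_k M$ which contributes $0$, the only candidates are $N = y_i x_k M$ itself (contributing coefficient $1$), $N = x_{i+1} y_{k-1} M$ (contributing $a_{k-1}+1$, where $a_j$ denotes the $y_j$-degree of $M$), and $N = y_i y_{k-1} M$ (contributing $(a_i+1)(a_{k-1}+1)$).

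Next I would invoke Lemma~\ref{ilk}(i) with $\sigma = \sigma_1$, whose action on $\Omega^m(\field)$ matches the setup of Section~\ref{preliminaries}, to rule out $y_i y_{k-1} M$: this is a pure $y$-monomial whose $y_i$-degree is $a_i + 1$, which is odd because $a_i$ is even by hypothesis. Its coefficient in $f$ therefore vanishes. The remaining equation in $\ftwo$ reads $c + c'(a_{k-1}+1) = 0$, where $c \ne 0$ is the coefficient of $y_i x_k M$ in $f$ and $c'$ is the coefficient of $x_{i+1} y_{k-1} M$ in $f$. This forces $a_{k-1}+1$ to be odd, i.e., $a_{k-1}$ even, and $c' = c \ne 0$, which gives both conclusions of the lemma.

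The main obstacle is ensuring the enumeration of candidate monomials is exhaustive and that the restriction $k \ne i+1$ genuinely eliminates every squared-shift contribution; once the list of three non-trivial candidates is nailed down, the conclusion is forced by a single linear equation in characteristic $2$.
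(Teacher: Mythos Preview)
Your proof is correct and follows essentially the same route as the paper's: both track the monomial $x_{i+1}x_kM$ in $\Delta_2(f)=0$, enumerate the same three non-trivial candidate preimages, eliminate $y_iy_{k-1}M$ via Lemma~\ref{ilk} applied to $\sigma_1$, and conclude from the resulting parity constraint. The only cosmetic difference is that you phrase the parity argument as solving a linear equation $c + c'(a_{k-1}+1)=0$ in $\ftwo$, whereas the paper argues directly that if the degree of $y_{k-1}$ in $M$ were odd there would be no other contributing monomial.
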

\begin{proof}
Since the degree of $y_i$ in $M$ is even, $x_{i+1}x_kM$ appears in $\Delta_2 (y_ix_kM)$. Since
$\Delta_2 (f)=0$, $f$ must contain another monomial that produces
$x_{i+1}x_kM$ after applying $\Delta_2$. If the degree of
$y_{k-1}$  in $M$ is odd, then there is no such monomial.
Thus the degree of $y_{k-1}$ in $M$ is even and
applying $\Delta_2$ to  either $y_iy_{k-1}M$ or
 $x_{i+1}y_{k-1}M$ produces $x_{i+1}x_kM$. However, by Lemma~\ref{ilk}, $y_iy_{k-1}M$
does not appear in $f$. Thus  $x_{i+1}y_{k-1}M$ appears in $f$.
\end{proof}

\begin{sproposition}
\label{artikiki} Suppose $M=y_1^{e_1}\cdots y_m^{e_m}$. If $k$ is
a positive integer and  $Mx_1^k$ or $Mx_{m+1}^k$ appears in a
$G$-invariant polynomial, than $e_j$ is even for $1\le j\le m$.
\end{sproposition}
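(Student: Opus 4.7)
The plan is to reduce each of the two cases to a direct application of Lemma~\ref{ilk}(i), by recognising that the extra variable ($x_1$ or $x_{m+1}$) sits in a trivial one-dimensional subrepresentation with respect to one of the generators of $G$.

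For the case of $Mx_1^k$, the key observation is that under $\sigma_2$ the variable $x_1$ is fixed and does not appear in any $\sigma_2(y_j) - y_j = x_{j+1}$. Thus the pairs $(x_{j+1},y_j)$ for $j=1,\dots,m$ each span a copy of $V_2$ under $\sigma_2$, while $\field[x_1]$ is $\sigma_2$-trivial. This gives a $\sigma_2$-stable tensor decomposition
\[
S \;=\; \field[x_1] \otimes R, \qquad R := \field[x_2,\ldots,x_{m+1},y_1,\ldots,y_m],
\]
where, after relabelling $\widetilde{x}_j := x_{j+1}$ and $\widetilde{y}_j := y_j$, the ring $R$ together with the $\sigma_2$-action is precisely the setup of Section~\ref{preliminaries}. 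Since $\sigma_2$ acts trivially on $\field[x_1]$, we have $S^{\sigma_2} = \field[x_1] \otimes R^{\sigma_2}$. Any $G$-invariant $f$ containing $Mx_1^k$ lies in $S^{\sigma_2}$, so I would expand $f = \sum_{j} x_1^{j} f_{j}$ with $f_j \in R^{\sigma_2}$; then $M$ appears in $f_k$, and Lemma~\ref{ilk}(i) applied to $R^{\sigma_2}$ forces every $e_j$ to be even.

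The case of $Mx_{m+1}^k$ is handled symmetrically, swapping the roles of $\sigma_1$ and $\sigma_2$: under $\sigma_1$ the variable $x_{m+1}$ is fixed and the pairs $(x_j,y_j)$ for $j=1,\dots,m$ each span a copy of $V_2$, yielding the $\sigma_1$-stable decomposition $S = \field[x_{m+1}] \otimes \field[x_1,\ldots,x_m,y_1,\ldots,y_m]$, to which Lemma~\ref{ilk}(i) with $\sigma=\sigma_1$ applies verbatim.

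No real obstacle is expected: once the tensor decomposition and the triviality of the extra factor are noted, the result follows immediately from Lemma~\ref{ilk}(i). Morally, the proposition says that because $x_1$ (respectively $x_{m+1}$) cannot interact with the $y_j$'s through either $\Delta_2$ (respectively $\Delta_1$), the parity constraint on pure $y$-monomials in $\sigma_2$- (respectively $\sigma_1$-)invariants carries over unchanged to monomials of the form $M x_1^k$ (respectively $M x_{m+1}^k$).
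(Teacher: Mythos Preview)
Your proposal is correct and essentially identical to the paper's own proof: the paper also observes the tensor decompositions $S^{\sigma_1}\cong\field[x_i,y_i\mid i\le m]^{\sigma_1}\otimes\field[x_{m+1}]$ and $S^{\sigma_2}\cong\field[x_{i+1},y_i\mid i\le m]^{\sigma_2}\otimes\field[x_1]$, and then applies Lemma~\ref{ilk}(i) with $\sigma=\sigma_1$ or $\sigma=\sigma_2$ respectively. Your version spells out the coefficient extraction $f=\sum_j x_1^j f_j$ a bit more explicitly, but the argument is the same.
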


\begin{proof}
Note that $S^{\sigma_1}\cong\field[x_i, y_i\mid i\le m]^{\sigma_1}\otimes \field[x_{m+1}]$
and  $S^{\sigma_2}\cong\field[x_{i+1}, y_i\mid i\le m]^{\sigma_2}\otimes \field[x_{1}]$.
If $Mx_{m+1}^k$ appears in a $G$-invariant polynomial, then
$M$ appears in a $\sigma_1$-invariant polynomial, and
the result follows from applying Lemma~\ref{ilk} with $\sigma=\sigma_1$.
If $Mx_{1}^k$ appears in a $G$-invariant polynomial, then
$M$ appears in a $\sigma_2$-invariant polynomial, and
the result follows from applying Lemma~\ref{ilk} with $\sigma=\sigma_2$.
\end{proof}

\begin{sproposition}
\label{ciftiki} Suppose $M=\prod_{j\in J}y_j^2$ for a non-empty index set
$J\subseteq \{1, \dots , m\}$. Then $M$ does not
appear in a $G$-invariant polynomial.
\end{sproposition}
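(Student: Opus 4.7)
The argument will proceed by contradiction. Assume $M = \prod_{j\in J}y_j^2$ appears in some $f \in S^G$, let $j = \max J$, set $M' = M/y_j^2$, and write $c_M$ for the (by hypothesis nonzero) coefficient of $M$ in $f$. The strategy is to build a chain of monomials each appearing in $f$ with coefficient $c_M$, advancing by cancellation identities drawn from $\Delta_1 f = 0$ and $\Delta_2 f = 0$, until an index boundary forces $c_M = 0$.

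The first two links in the chain are representative of the whole. Examining $\Delta_2 f = 0$ at the target $M' x_{j+1}^2$, a direct enumeration of the monomials whose $\Delta_2$-image contains that target (namely $M$, $M' y_j x_{j+1}$, and the target itself) forces $M' y_j x_{j+1}$ to appear in $f$ with coefficient $c_M$. A parallel analysis of $\Delta_1 f = 0$ at $M' x_j x_{j+1}$, using Lemma~\ref{ilk} to eliminate the pure-$y$ contributor $M' y_j y_{j+1}$ (which has $y_j$-degree $1$), yields $M' x_j y_{j+1}$ in $f$ with coefficient $c_M$. When $j = m$ the latter monomial does not exist and the relation already gives $c_M = 0$; otherwise the chain has begun.

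I would then iterate to show inductively that the level-$k$ monomial $M' x_{j-k+1} y_{j+k}$ appears in $f$ with coefficient $c_M$ for every $1 \le k \le K := \min(j, m - j)$, each step consisting of one $\Delta_2$-relation followed by one $\Delta_1$-relation together with an application of Lemma~\ref{ilk}. The case $j - k \in J$ forces a variant intermediate of the form $(M'/y_{j-k}^2)\, y_{j-k}^3 x_{j+k+1}$ rather than $M' y_{j-k} x_{j+k+1}$; reconciling the two cases uniformly via the identity $\binom{3}{2} \equiv 1 \pmod 2$ is the main bookkeeping obstacle. The chain terminates at $k = K$: if $K = j$ then the next $\Delta_2$-target $M' x_1 x_{2j+1}$ has no $y$-partner (since $y_0$ is not a variable), and if $K = m - j$ then one further $\Delta_1$-step at $M' x_{j-K} x_{m+1}$ also lacks a partner (since $y_{m+1}$ is not a variable). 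Either way the cancellation relation collapses to $c_M = 0$, which is the required contradiction.
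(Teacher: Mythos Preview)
Your proof is correct and follows essentially the same strategy as the paper's: starting from $M$, you alternate a $\Delta_2$-cancellation and a $\Delta_1$-cancellation to walk the indices outward, producing monomials of the form $M'x_{j-k+1}y_{j+k}$ (and intermediates $M'y_{j-k}x_{j+k+1}$) until an index hits the boundary of $\{1,\ldots,m\}$. The paper packages your $\Delta_1$-step as Lemma~\ref{permute} and your $\Delta_2$-step as Lemma~\ref{delta2}, and at the boundary it invokes Proposition~\ref{artikiki} (which says a monomial divisible by $x_1$ or $x_{m+1}$ forces all $y$-exponents to be even) rather than performing your final extra $\Delta$-step; but that extra step is precisely what proves the relevant instance of Proposition~\ref{artikiki}, so the two terminations amount to the same thing. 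Your remark about the ``variant intermediate'' is harmless bookkeeping: $(M'/y_{j-k}^2)\,y_{j-k}^3x_{j+k+1}$ and $M'y_{j-k}x_{j+k+1}$ are literally the same monomial, and as you note the coefficient $\binom{3}{2}\equiv 1\pmod 2$ makes the cancellation go through uniformly whether $j-k\in J$ or not.
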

\begin{proof}
Suppose, by way of contradiction, that $M$ appears in a $G$-invariant polynomial $f$.
Let $\ell$ denote the largest integer in $J$ and set $M'=M/y_{\ell}^2$.
Note that $M'x_{\ell+1}^2$ appears in $\Delta_2(M)$ and since $\Delta_2 (f)=0$, there must be another monomial in
$f$ that produces $M'x_{\ell+1}^2$ after applying $\Delta_2$.
The only other monomial in $S$ with this property is $M'y_{\ell}x_{\ell+1}$.
Therefore, this monomial also appears in $f$.
If $\ell=m$, then the degree of $y_m$ in
$M'y_{\ell}x_{\ell+1}=M'y_mx_{m+1}$ is odd, and we have a
contradiction by Proposition~\ref{artikiki}.
Otherwise, using Lemma~\ref{permute},
$M'x_{\ell}y_{\ell+1}$ appears in $f$. If $\ell=1$, this also gives a
contradiction using  Proposition~\ref{artikiki}.
Otherwise, we apply Lemma~\ref{delta2} and conclude that $M'y_{\ell-1}x_{\ell+2}$ appears in $f$.
This gives a contradiction if $\ell+1=m$. Continuing in
this fashion, the process terminates with either $M'y_{2\ell-m}x_{m+1}$ or
$M'y_{2\ell}x_{1}$ appearing in $f$, again contradicting Proposition~\ref{artikiki}.
\end{proof}

Returning  to the proof of Theorem~\ref{hard-odd-thm},
first suppose that $M_1$ is a  factor
of $y_1^3\cdots y_{m-1}^3$. Since $M_1$ appears in a
$\sigma_1$-invariant, we have from Lemma~\ref{ilk} that
the degree of each $y_i$ in $M_1$ is even.
However, since these degrees are at most two, we get a
contradiction using Proposition~\ref{ciftiki}.
Similarly, $M_2$  is a
not factor of $y_1^3\cdots y_{m-1}^3$.
Therefore we may assume $x_m$
divides $M_1$ and $x_{m+1}$ divides $M_2$.
By Proposition~\ref{artikiki},
 the degrees of the variables $y_1, \ldots, y_{m-1}$  in $M_2$ are even. Hence
 the degrees of these variables in $M_1$ are odd. Therefore we have either
 $M_1=y_1^{a_1}\cdots y_{m-1}^{a_{m-1}}x_m$ or $M_1=y_1^{a_1}\cdots y_{m-1}^{a_{m-1}}x_mx_{m+1}$,
  where $a_1, \dots ,a_{m-1}$ are odd. Let $f$ denote the $G$-invariant polynomial in which $M_1$
  appears. Suppose that  $M_1=y_1^{a_1}\cdots y_{m-1}^{a_{m-1}}x_m$. Since
  $y_1^{a_1}\cdots y_{m-1}^{a_{m-1}-1}x_m^2$ appears in $\Delta_2 (M_1)$ and $\Delta_2 (f)=0$,
  there must be another monomial in $f$ that produces $y_1^{a_1}\cdots y_{m-1}^{a_{m-1}-1}x_m^2$ after
   applying $\del_2$. However, $y_1^{a_1}\cdots y_{m-1}^{a_{m-1}+1}$ is the only other monomial in $S$ with
    this property. Since $f$ is also $\sigma_1$-invariant and $a_1$ is odd we get a contradiction by Lemma~\ref{ilk}.
Therefore, we may assume that $M_1=y_1^{a_1}\cdots y_{m-1}^{a_{m-1}}x_mx_{m+1}$.
Then $y_1^{a_1}\cdots y_{m-1}^{a_{m-1}-1}x_m^2x_{m+1}$ appears in
$\del_2 (M_1)$. Since $\Delta_2 (f)=0$, there must be another
monomial in $f$ that produces $y_1^{a_1}\cdots
y_{m-1}^{a_{m-1}-1}x_m^2x_{m+1}$ after applying $\del_2$. The
monomials in $S$ with this property are  $y_1^{a_1}\cdots
y_{m-1}^{a_{m-1}+1}y_{m}$,
 $y_1^{a_1}\cdots y_{m-1}^{a_{m-1}}x_my_{m}$, $y_1^{a_1}\cdots
y_{m-1}^{a_{m-1}+1}x_{m+1}$, $y_1^{a_1}\cdots
y_{m-1}^{a_{m-1}-1}x_m^2y_{m}$.
 The first two monomials do not appear in $f$  by Lemma \ref{ilk}
 because the degree of $y_1$ is odd. For the same reason the third monomial does not
 appear in $f$ by Proposition \ref{artikiki}. Finally, if $y_1^{a_1}\cdots
y_{m-1}^{a_{m-1}-1}x_m^2y_{m}$ appears in $f$, then there must be
another monomial in $f$ that produces $y_1^{a_1-1}x_1\cdots
y_{m-1}^{a_{m-1}-1}x_m^2y_{m}$ after applying $\del_1$. However,
$y_1^{a_1}\cdots y_{m-1}^{a_{m-1}-1}y_{m}^3$ and
$y_1^{a_1-1}x_1\cdots y_{m-1}^{a_{m-1}-1}y_{m}^3$ are the only
monomials in $S$ with this property. Since neither of these
monomials can appear in $f$, by Lemma~\ref{ilk} and
Proposition~\ref{artikiki} respectively, we have ruled out all
possible factorisations, proving Theorem~\ref{hard-odd-thm}.

\begin{ack}
 We thank  the referee for  comments that lead to the improvement of the exposition of the paper.
\end{ack}

\ifx\undefined\bysame
\newcommand{\bysame}{\leavevmode\hbox to3em{\hrulefill}\,}
\fi


\begin{thebibliography}{30}

\bibitem{adem-milgram}
A.~Adem and R.J.~Milgram, {\em Cohomology of Finite Groups},
Springer-Verlag, 1994.


\bibitem{benson}
D.J. Benson, {\em Representations and cohomology. I. Basic
representation theory of finite groups and associative algebras},
Cambridge Studies in Advanced Mathematics {\bf 30},  Cambridge
University Press, Cambridge, 1991.
\bibitem{magma}
W.~Bosma, J.J.~Cannon and C.~Playoust, {\em The Magma algebra system I:
the user language}, J. Sym. Comp. {\bf 24} (1997) 235--265.

\bibitem{CampChuai}
H.E.A. Campbell and J. Chuai,
{\em On the invariant fields and localized invariant rings of $p$-groups},
Quarterly J. of Math. {\bf 58} (2007) 151--157.

\bibitem{campbell-hughes}
H.E.A. Campbell and I.P. Hughes,
{\em Vector invariants of $U_2({\bf F}_p)$: A proof of a
conjecture of Richman}, Adv. in Math. {\bf 126} (1997) 1--20.

\bibitem{CHSW}
H.E.A. Campbell, I.P. Hughes, R.J. Shank and D.L. Wehlau,
{\em Bases for rings of coinvariants},
Transformation Groups {\bf 4} (1996) 307--336.

\bibitem{CSW-ad}
H.E.A. Campbell, R.J. Shank and D.L. Wehlau, {\em Vector
invariants for the two-dimensional modular representation of a
cyclic group of prime order}, Adv. Math., {\bf 225} (2010), no. 2,
1069--1094.

\bibitem{CSW}
H.E.A. Campbell, R.J. Shank and D.L. Wehlau,
{\em Rings of invariants for modular representations of elementary abelian $p$-groups},
Transformation Groups {\bf 18} (2013) 1--22.


\bibitem{DK}
H. Derksen and G. Kemper, {\em Computational Invariant Theory},
Encyclopaedia of Math. Sci. {\bf 130}, Springer, 2002.

\bibitem{El-Fl}
J. Elmer and P. Fleischmann, {\em On the Depth of Modular Invariant
Rings for the Groups $C_p\times C_p$}, in H.E.A. Campbell et al.
(editors), Symmetry and Spaces: in Honor of Gerry Schwarz ,
Progress in Mathematics, volume 278, 45--61, Birkhäuser, 2010.

\bibitem{feshbach}
M. Feshbach,
{\em $p$-Subgroups of compact Lie groups and torsion of infinite height in $H^*(BG)$, II},
Mich. Math. J. {\bf 29} (1982) 299--366.

\bibitem{FSSW}
 P. Fleischmann, M. Sezer,  R.J. Shank and C.F. Woodcock, {\em  Noether numbers for  cyclic
 groups
of prime order}, Adv. Math., {\bf 207}(2006), no. 1, 149--155.



\bibitem{Fl-W}
P. Fleischmann and C.F. Woodcock, {\em Non-linear group actions
with polynomial invariant rings and a structure theorem for
modular Galois extensions}, Proc. LMS {\bf 103} (2011), no. 5,
826–-846.


\bibitem{Kang}
M. Kang,
{\em Fixed fields of triangular matrix groups},
J. of Algebra {\bf 302} (2006) 845--847.

\bibitem{K-S}
M. Kohls and M. Sezer,
{\em Separating invariants for the Klein four group and cyclic groups}
International Journal of Mathematics {\bf 24} (2013), no. 6,
DOI: 10.1142/S0129167X13500468.

\bibitem{richman}
D.R. Richman,
{\em On vector invariants over finite fields},
Adv. Math. {\bf 81} (1990) 30 --65.

\bibitem{sezer}
M. Sezer, {\em  Coinvariants and the regular representation of a
cyclic $P$-group}, Math. Z. {\bf 273} (2013), no.~1,2,  539 --546.


\bibitem{modular-transfer}
R.J. Shank and D.L. Wehlau, {\em  The transfer in Modular
Invariant Theory}, J. of Pure  and Applied Algebra {\bf 142}
(1999) no.~1, 67--77.

\bibitem{sw-comp}
R.J. Shank and D.L. Wehlau,
{\it Computing modular invariants of p-groups},
Journal of Symbolic Computation {\bf 34} (2002) no.~5, 307--327.


\bibitem{sw-prog}
R.J. Shank and D.L. Wehlau, {\it Decomposing symmetric powers of
certain modular representations of cyclic groups.},
in H.E.A. Campbell et al.
(editors), Symmetry and Spaces: in Honor of Gerry Schwarz ,
Progress in Mathematics, volume 278, 169--196, Birkhäuser, 2010.

\bibitem{sturm}
B. Sturmfels, {\em Gr\"obner bases and convex polytopes}, Amer.
Math. Society, 1996.

\bibitem{symonds}
P. Symonds,
{\em On the Castelnuovo-Mumford Regularity of Rings of Polynomial Invariants},
Annals of Math. {\bf 174} (2011) no.~1, 499--517.

\bibitem{wehlau}
D.L. Wehlau, {\em Invariants for modular representations of a
cyclic group of prime order via classical invariant theory}, J.
Eur. Math. Soc. (JEMS) {\bf 15} (2013) no.~3, 775--803.

\end{thebibliography}
\end{document}